\documentclass{amsart}

\usepackage{mathrsfs}
\usepackage{amscd}
\usepackage{amsmath}
\usepackage{amssymb}
\usepackage{amsthm}
\usepackage{bm}
\usepackage{epsf}
\usepackage{latexsym}
\usepackage{verbatim}
\usepackage[all, cmtip]{xy}
\usepackage{tikz}
\usetikzlibrary{positioning}
\usetikzlibrary{matrix}
\usepackage{float}
\usepackage{hyperref}
\usepackage{comment}
\usepackage{ytableau}

\tikzstyle{bsq}=[rectangle, draw, thick, minimum width=.5cm, minimum height=.5cm]
\tikzstyle{bver}=[rectangle, draw, thick, minimum width=1cm, minimum height=2cm]
\tikzstyle{bhor}=[rectangle, draw, thick, minimum width=2cm, minimum height=1cm]

\usepackage[left=3.2cm, right=3.2cm]{geometry}

\setlength{\textheight}{8.5in} \setlength{\topmargin}{0.0in}
\setlength{\headheight}{0in} \setlength{\headsep}{0.3in}
\setlength{\leftmargin}{1.5in}

\newtheorem{theorem}{Theorem}[section]
\newtheorem{definition}[theorem]{Definition}
\newtheorem{lemma}[theorem]{Lemma}
\newtheorem{conjecture}[theorem]{Conjecture}
\newtheorem{corollary}[theorem]{Corollary}
\newtheorem{proposition}[theorem]{Proposition}

\newtheorem{varexample}[theorem]{Example}

\theoremstyle{definition}
\newtheorem{remark}[theorem]{Remark}
\newtheorem{algorithm}[theorem]{Algorithm}

\newcommand{\PP}{\mathbb{P}}

\newcommand{\ZZ}{\mathbb{Z}}
\newcommand{\TT}{\mathbb{T}}

\newcommand{\cO}{\mathcal{O}}

\newcommand{\bmu}{\bm{\mu}}
\newcommand{\blam}{\bm{\lambda}}
\newcommand{\bC}{\mathbf{C}}

\newcommand{\Trop}{\operatorname{Trop}}

\newcommand{\Jac}{\operatorname{Jac}}

\newcommand{\Pic}{\operatorname{Pic}}
\newcommand{\Sym}{\operatorname{Sym}}

\newcommand{\rk}{\mathrm{rk}}

\newenvironment{example}{\begin{varexample}
\begin{normalfont}}{\end{normalfont}
\end{varexample}}

\begin{document}
\title{Tropical Methods in Hurwitz-Brill-Noether Theory}

\author{Kaelin Cook-Powell}
\author{David Jensen}

\maketitle

\begin{abstract}
Splitting type loci are the natural generalizations of Brill-Noether varieties for curves with a distinguished map to the projective line.  We give a tropical proof of a theorem of H. Larson, showing that splitting type loci have the expected dimension for general elements of the Hurwitz space.  Our proof uses an explicit description of splitting type loci on a certain family of tropical curves.  We further show that these tropical splitting type loci are connected in codimension one, and describe an algorithm for computing their cardinality when they are zero-dimensional.  We provide a conjecture for the numerical class of splitting type loci, which we confirm in a number of cases.
\end{abstract}

\section{Introduction}
\label{Sec:Intro}

The Picard variety of a curve $C$ is stratified by the subschemes $W^r_d (C)$, parameterizing line bundles of degree $d$ and rank at least $r$.  The study of these subschemes, known as Brill-Noether theory, is a central area of research in algebraic geometry.  The celebrated Brill-Noether Theorem of Griffiths and Harris says that, if $C \in \mathcal{M}_g$ is general, then the varieties $W^r_d (C)$ are equidimensional of the expected dimension, with the convention that a variety of negative dimension is empty \cite{GH80}.

If $C$ is not general, what can we say about its Brill-Noether theory?  The \emph{gonality} of $C$ is the smallest integer $k$ such that $W^1_k (C)$ is nonempty, and a consequence of the Brill-Noether Theorem is that the gonality of a general curve is $\lfloor \frac{g+3}{2} \rfloor$.  If we assume that $C$ has smaller gonality than this, what effect does this assumption have on the dimensions of $W^r_d (C)$ for other values of $r$ and $d$?  Along these lines, several recent papers have focused on the Brill-Noether theory of curves that are general in the Hurwitz space $\mathcal{H}_{g,k}$, rather than the moduli space $\mathcal{M}_g$ \cite{CoppensMartens, CoppensMartens2, PfluegerkGonal, JensenRanganthan, Larson, CPJ, LU, CLRW}.  The Hurwitz space $\mathcal{H}_{g,k}$ parameterizes degree $k$ branched covers of $\PP^1$, where the source has genus $g$.  If $k < \lfloor \frac{g+3}{2} \rfloor$ and $(C, \pi) \in \mathcal{H}_{g,k}$ is general, then the varieties $W^r_d (C)$ can have multiple components of varying dimensions, prohibiting a naive generalization of the Brill-Noether Theorem.

In this setting, however, the Picard variety of $C$ admits a more refined stratification.  We say that a line bundle $L \in \Pic (C)$ has \emph{splitting type} $\bmu  = (\mu_1 , \ldots , \mu_k)$ if $\pi_* L \cong \oplus_{i=1}^k \cO (\mu_i )$.  (See Section~\ref{Sec:Split}.)  Since the splitting type of a line bundle determines that line bundle's rank and degree, it is a more refined invariant.  The \emph{splitting type locus} $W^{\bmu} (C) \subseteq \Pic (C)$ parameterizing line bundles of splitting type $\bmu$ is locally closed, of expected codimension
\[
\vert \bmu \vert := \sum_{i < j} \max \{ 0 , \mu_j - \mu_i - 1 \} .
\]
In \cite{Larson}, H. Larson proves an analogue of the Brill-Noether Theorem for the strata $W^{\bmu} (C)$.

\begin{theorem} \cite{Larson}
\label{Thm:MainThm}
Let $(C,\pi) \in \mathcal{H}_{g,k}$ be general.  If $g \geq \vert \bmu \vert$, then
\[
\mathrm{dim} W^{\bmu} (C) = g - \vert \bmu \vert .
\]
If $g < \vert \bmu \vert$, then $W^{\bmu} (C)$ is empty.
\end{theorem}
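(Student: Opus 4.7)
The plan is to prove Theorem~\ref{Thm:MainThm} by tropical degeneration, following the paradigm that has proved successful for the classical Brill--Noether Theorem. I would exhibit a particular tropical curve $\Gamma$ equipped with a degree-$k$ harmonic morphism $\pi \colon \Gamma \to T$ to a tree, show that the analogous tropical splitting type loci $W^{\bmu}(\Gamma)$ have the predicted dimension $g - \vert \bmu \vert$ (and are empty when $g < \vert \bmu \vert$), and then transfer the conclusion to a general point of $\mathcal{H}_{g,k}$ by semicontinuity.

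The first step is to identify a workable tropical analogue of splitting type. Because $\pi_* L \cong \oplus_i \cO(\mu_i)$ is equivalent to the numerical data
\[
h^0(\pi_* L(-a)) = \sum_{i=1}^k \max\{0,\mu_i + 1 - a\} \qquad \text{for all } a \in \ZZ,
\]
and because $h^0(\pi_* L(-a)) = h^0(L \otimes \pi^* \cO(-a))$ by the projection formula, the splitting type is captured by the jumps of the function $a \mapsto h^0(L - aF)$, where $F$ is the class of a fiber. I would therefore define $W^{\bmu}(\Gamma)$ to be the locus of divisor classes $D \in \Pic(\Gamma)$ satisfying the analogous simultaneous rank conditions $\rk(D - aF) \geq r_a(\bmu)$ for the tropical (Baker--Norine) rank, where $F$ denotes the class of a fiber of $\pi$.

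Next I would pick the model tropical curve. A natural choice is a chain-of-loops-like graph with carefully chosen edge lengths, together with an explicit degree-$k$ harmonic morphism to a segment tree; such $(\Gamma,\pi)$ are known to arise as skeletons of generic curves in $\mathcal{H}_{g,k}$. The goal here is a completely combinatorial description of $W^{\bmu}(\Gamma)$: parametrize divisor classes by $v_0$-reduced (or break) divisors, translate each rank condition $\rk(D - aF) \geq r_a(\bmu)$ into a combinatorial (tableau- or lattice-point-type) condition, and verify that the resulting locus is a union of polyhedral cells each of dimension exactly $g - \vert \bmu \vert$, with emptiness when $g < \vert \bmu \vert$.

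Finally, I would lift to the algebraic side. Specialization of any $L \in W^{\bmu}(C)$ lands in the tropical splitting type locus of $(\Gamma,\pi)$, since both the map $\pi$ and the rank conditions are compatible with tropicalization; upper semicontinuity of fiber dimension then forces $\dim W^{\bmu}(C) \leq g - \vert \bmu \vert$, and emptiness tropically forces emptiness algebraically when $g < \vert \bmu \vert$. The reverse inequality, $\dim W^{\bmu}(C) \geq g - \vert \bmu \vert$ whenever nonempty, is automatic from the determinantal nature of $W^{\bmu}(C)$ inside $\Pic(C)$. The principal obstacle is the third step: the combinatorial dimension computation on $\Gamma$. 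Even imposing a single rank condition on a chain of loops requires intricate displacement-tableau combinatorics, and here one must control an entire family of rank conditions $\rk(D - aF) \geq r_a$ simultaneously; producing the right combinatorial labeling that makes every stratum visibly of the expected dimension, and no larger, is where the real work lies.
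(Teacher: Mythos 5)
Your outline is exactly the strategy of the paper: define $\overline{W}^{\bmu}(\Gamma)$ on a $k$-gonal chain of loops by the simultaneous rank conditions $\rk(D+mg^1_k)\geq x_m(\bmu)-1$, classify it combinatorially, and transfer the upper bound on dimension to $C$ via Baker's Specialization Lemma together with the fact that tropicalization preserves dimension. But what you have written is a plan, and the step you yourself flag as the ``principal obstacle'' is the entire content of the argument; as it stands this is a genuine gap, not a detail. Concretely, the paper fills it by (i) attaching to $\bmu$ a staircase partition $\lambda(\bmu)$ whose rectangular slices $\lambda_m(\bmu)$ encode the individual rank conditions, (ii) showing $\overline{W}^{\bmu}(\Gamma)$ is the union of coordinate tori $\TT(t)$ over $k$-uniform displacement tableaux on $\lambda(\bmu)$ (the compatibility of the conditions for different $m$ is not automatic --- one has to glue the tableaux $t_m$ on the slices into a single tableau on $\lambda(\bmu)$ using the linearity of the coordinates $\widetilde{\xi}_j$), and (iii) proving that $\lambda(\bmu)$ is a $k$-core and that every $k$-uniform displacement tableau on a $k$-core uses at least $\rho_k(\lambda)$ distinct symbols, with $\rho_k(\lambda(\bmu))=\vert\bmu\vert$. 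Step (iii) is where equidimensionality comes from, and it requires the displacement/$k$-core machinery, not just a cell-by-cell inspection.

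A second, smaller gap: the lower bound is not ``automatic from the determinantal nature'' of $W^{\bmu}(C)$. The degeneracy-locus estimate gives $\dim \overline{W}^{\bmu}(C)\geq g-\vert\bmu\vert$ only \emph{provided} $\overline{W}^{\bmu}(C)$ is nonempty, and nonemptiness when $g\geq\vert\bmu\vert$ is not supplied by the tropical argument (specialization only gives containment of the tropicalization in $\overline{W}^{\bmu}(\Gamma)$, not surjectivity --- that is precisely Conjecture~\ref{Conj:Surj}, which is open). The paper relies on Larson's intersection-number computation for nonemptiness, and on \cite[Lemma~2.1]{Larson} to pass from the closed stratum $\overline{W}^{\bmu}(C)$ to the locally closed $W^{\bmu}(C)$ appearing in the statement. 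You should either cite these inputs or supply an independent existence argument.
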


Theorem~\ref{Thm:MainThm} is proven by considering analogous closed strata $\overline{W}^{\bmu}(C)$ containing $W^{\bmu}(C)$.  We refer the reader to Section~\ref{Sec:Split} for a precise definition.  As in the original Brill-Noether Theorem, the fact that the dimension of $\overline{W}^{\bmu} (C)$ is at least $g - \vert \bmu \vert$ holds for all $(C,\pi) \in \mathcal{H}_{g,k}$.  This follows from standard results about degeneracy loci, provided that $\overline{W}^{\bmu} (C)$ is nonempty.  Larson demonstrates the nonemptiness of $\overline{W}^{\bmu} (C)$ by showing that a certain intersection number is nonzero.

The fact that the dimension of $\overline{W}^{\bmu} (C)$ is at most $g - \vert \bmu \vert$ is much deeper.  In this paper, we give a new proof of this result using tropical and combinatorial techniques.  In Section~\ref{Sec:COL}, we define tropical analogues of splitting type loci, and prove that these contain the image of the corresponding splitting type loci under tropicalization.  Our approach builds on earlier work exploring the divisor theory of a certain family of tropical curves known as \emph{chains of loops} \cite{CDPR, PfluegerkGonal, JensenRanganthan, CPJ}.  Theorem~\ref{Thm:MainThm} is a consequence of the following result.

\begin{theorem}
\label{thm:TropEquiDim}
Let $\Gamma$ be a $k$-gonal chain of loops of genus $g$.  If $g \geq \vert \bmu \vert$, then $\overline{W}^{\bmu} (\Gamma)$ is equidimensional and
\[
\mathrm{dim} \overline{W}^{\bmu}(\Gamma) = g - \vert \bmu \vert .
\]
If $g < \vert \bmu \vert$, then $\overline{W}^{\bmu} (\Gamma)$ is empty.
\end{theorem}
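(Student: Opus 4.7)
The plan is to exploit the explicit combinatorial description of $\Pic(\Gamma)$ for chains of loops from~\cite{CDPR}, refined to the $k$-gonal setting in~\cite{PfluegerkGonal, JensenRanganthan, CPJ}. Up to a discrete choice, every divisor class of fixed degree on $\Gamma$ is parameterized by a tuple $(\xi_1, \dots, \xi_g)$ of chip positions on the $g$ loops, and in the $k$-gonal case the bridge lengths impose specific torsion relations on these coordinates. Building on the definition of the tropical splitting type locus $\overline{W}^{\bmu}(\Gamma)$ from Section~\ref{Sec:COL}, the goal is to describe $\overline{W}^{\bmu}(\Gamma)$ as a finite union of polytopal cells inside $\Pic(\Gamma)$ and then compute their dimensions.

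The first step is to translate membership in $\overline{W}^{\bmu}(\Gamma)$ into explicit arithmetic conditions on $(\xi_1, \dots, \xi_g)$. Since splitting type is determined by the ranks $\rk(D + n D_\pi)$ as $n$ varies, where $D_\pi$ denotes a fiber of the gonality pencil, I would apply the chain-of-loops rank formula to convert each such rank condition into a combinatorial statement about how the $\xi_i$ interact with the $k$-torsion residues induced by the cover. Collecting these statements across all relevant $n$ yields a finite stratification of $\overline{W}^{\bmu}(\Gamma)$ indexed by combinatorial data that refines the displacement tableaux of~\cite{PfluegerkGonal}.

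The second step is the dimension count. Each stratum is an open subset of a polytope whose dimension equals $g$ minus the number of independent arithmetic constraints imposed on $(\xi_1, \dots, \xi_g)$. The key combinatorial lemma is that for any stratum contained in $\overline{W}^{\bmu}(\Gamma)$, the number of independent constraints is at least $|\bmu| = \sum_{i < j} \max\{0, \mu_j - \mu_i - 1\}$, with equality attained on at least one stratum: each pair $(i,j)$ with $\mu_j - \mu_i - 1 \geq 1$ contributes that many constraints, and distinct pairs contribute independently. This simultaneously yields the upper bound $\dim \overline{W}^{\bmu}(\Gamma) \leq g - |\bmu|$, equidimensionality, and the emptiness assertion when $g < |\bmu|$, since one cannot impose more than $g$ independent constraints on $g$ coordinates. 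The matching lower bound on dimension is already known from the degeneracy-loci theory mentioned in the discussion preceding Theorem~\ref{Thm:MainThm}.

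The principal obstacle will be the second step: establishing that the constraints indexed by the different pairs $(i,j)$ are jointly independent, which requires a careful analysis of how the various rank conditions interact under the torsion structure of the $k$-gonal chain. In particular, one must rule out any ``accidental'' coincidences among the constraints coming from different values of $n$, and verify that the maximal strata indeed saturate the bound. Once this independence is in hand, the dimension counts and the emptiness case reduce to a bookkeeping argument over the partition data encoded in $\bmu$.
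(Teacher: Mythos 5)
Your overall strategy --- decompose $\overline{W}^{\bmu}(\Gamma)$ into coordinate cells cut out by congruence conditions on the $\xi_j$, indexed by tableau-like data, and count constraints --- is the same as the paper's (Theorem~\ref{thm:TropClassify} realizes $\overline{W}^{\bmu}(\Gamma)$ as a union of tori $\TT(t)$ over $k$-uniform displacement tableaux on the staircase $\lambda(\bmu)$, with $\mathrm{codim}\,\TT(t)$ equal to the number of distinct symbols in $t$). The gap is in your second step, which you correctly flag as the principal obstacle but whose proposed resolution would not work. The claim that ``each pair $(i,j)$ with $\mu_j-\mu_i-1\ge 1$ contributes that many constraints, and distinct pairs contribute independently'' is not the right mechanism: the constraints are indexed by the distinct symbols of a tableau, and the statement actually needed is that the minimal number of distinct symbols in a $k$-uniform displacement tableau on $\lambda(\bmu)$ equals $|\bmu|$. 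In the paper this consumes all of Section~\ref{Sec:Partitions}: one shows $\lambda(\bmu)$ is a $k$-core (Proposition~\ref{prop:SplitTypeiskDisp}), that the poset $\mathcal{P}_k$ of $k$-cores is graded by $\rho_k$ (Corollary~\ref{cor:PosetIsPoset}), and that $\rho_k(\lambda(\bmu)) = |\bmu|$ (Corollary~\ref{cor:SplitTypeMin}, via Lemma~\ref{lem:SplitTypeCol}). None of this reduces to independent contributions from pairs $(i,j)$.

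More seriously, even granting that count, your argument does not deliver equidimensionality. Knowing that every cell has codimension at least $|\bmu|$ and that some cell attains $|\bmu|$ gives $\dim \overline{W}^{\bmu}(\Gamma) = g - |\bmu|$, but equidimensionality requires that \emph{every} cell be contained in one of codimension exactly $|\bmu|$. That is Theorem~\ref{Thm:Equidim} (every $k$-uniform displacement tableau on a $k$-core is dominated by a $k$-saturated one built from a maximal chain in $\mathcal{P}_k(\lambda)$), which is the technical heart of the argument and is equivalent to the fact that every word in the affine symmetric group is equivalent to a reduced word; your proposal contains no substitute for it. Finally, your appeal to degeneracy-loci theory for the matching lower bound applies to the algebraic locus $\overline{W}^{\bmu}(C)$, not to the tropical one; transferring it to $\Gamma$ would require surjectivity of tropicalization, which is open (Conjecture~\ref{Conj:Surj}). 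The paper instead gets the lower bound directly from the existence of a saturated tableau using $|\bmu| \le g$ symbols.
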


\subsection{Tropical Splitting Type Loci}

In her proof of Theorem~\ref{Thm:MainThm}, Larson uses the theory of limit linear series on a chain of elliptic curves.  Remarkably, however, her proof does not require a description of splitting type loci on this degenerate curve.  That is, it is not necessary for her to classify those limit linear series that are limits of line bundles with a given splitting type $\bmu$.  In contrast, our proof of Theorem~\ref{thm:TropEquiDim} follows from an explicit description of $\overline{W}^{\bmu} (\Gamma)$.  This description is used to prove new results and formulate Conjectures~\ref{Conj:Class} and~\ref{Conj:Surj}.

Our description of $\overline{W}^{\bmu} (\Gamma)$ builds on the earlier work of \cite{CDPR, PfluegerkGonal, JensenRanganthan, CPJ} mentioned above.  The main technical result of \cite{CDPR} is a classification of special divisor classes on chains of loops, when the lengths of the edges are sufficiently general.  Specifically, if $\Gamma$ is such a chain of loops, then $W^r_d (\Gamma)$ is union of tori $\TT (t)$, where each torus corresponds to a standard Young tableau $t$ on a certain rectangular partition.  This result was generalized in \cite{PfluegerCycles, PfluegerkGonal} to chains of loops with arbitrary edge lengths.  If $\Gamma$ is the $k$-gonal chain of loops referred to in Theorem~\ref{thm:TropEquiDim}, then $W^r_d (\Gamma)$ is again a union of tori $\TT (t)$ indexed by rectangular tableaux, but here the tableaux are non-standard.  Instead, the tableaux are required to satisfy an arithmetic condition known as \emph{$k$-uniform displacement} (see Definition~\ref{Def:Displacement}).

Given a splitting type $\bmu \in \ZZ^k$, we define a partition $\lambda (\bmu)$ in Definition~\ref{def:SplitTypePartition}.  We call a partition of this type a $k$-\emph{staircase}.  Our description of $\overline{W}^{\bmu}(\Gamma)$ is analogous to that of $W^r_d (\Gamma)$ mentioned above.

\begin{theorem}
\label{thm:TropClassify}
Let $\Gamma$ be a $k$-gonal chain of loops of genus $g$.  Then 
\[
\overline{W}^{\bmu}(\Gamma) = \bigcup \TT(t),
\]
where the union is over all $k$-uniform displacement tableau $t$ on $\lambda(\bmu)$ with alphabet $[g]$.
\end{theorem}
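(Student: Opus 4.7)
My plan is to characterize $\overline{W}^{\bmu}(\Gamma)$ as an intersection of ordinary tropical Brill-Noether loci (for various twists by the fiber class of the cover), then apply the tableau description of those loci from \cite{PfluegerkGonal, CPJ}, and finally verify that the resulting combinatorial data on a finite collection of rectangles assembles into a single $k$-uniform displacement tableau on the $k$-staircase $\lambda(\bmu)$.

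\textbf{Step 1: Twist characterization.} Using the definition of $\overline{W}^{\bmu}(\Gamma)$ promised in Section~\ref{Sec:Split}, which is modeled on the algebraic identity $h^0(C, L \otimes \pi^*\cO(n)) = \sum_i \max\{0, \mu_i + n + 1\}$, I expect that $D \in \overline{W}^{\bmu}(\Gamma)$ if and only if
\[
\rk\bigl(D + n D_\pi\bigr) \ \geq \ \sum_{i=1}^k \max\{0, \mu_i + n + 1\} - 1 \qquad \text{for every } n \in \ZZ,
\]
where $D_\pi$ is the tropical class of a fiber of the degree-$k$ map to the metric tree. Only finitely many values of $n$ give nontrivial conditions, so this exhibits $\overline{W}^{\bmu}(\Gamma)$ as a finite intersection of translates of ordinary tropical Brill-Noether loci.

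\textbf{Step 2: Rectangular strata.} For each relevant $n$, Pflueger's classification as extended in \cite{PfluegerkGonal, CPJ} writes the corresponding twisted Brill-Noether locus as a union of tori $\TT(t_n)$ indexed by $k$-uniform displacement tableaux $t_n$ with alphabet $[g]$ on a rectangular partition $R_n$ determined by $n$ and $\bmu$. Thus $\overline{W}^{\bmu}(\Gamma)$ is the intersection, over $n$, of unions of such tori.

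\textbf{Step 3: Assembling a staircase tableau.} The key combinatorial input is that the rectangles $R_n$, when aligned by the shift coming from the twist, are nested inside the $k$-staircase $\lambda(\bmu)$, and their union is exactly $\lambda(\bmu)$; this should be a direct computation from Definition~\ref{def:SplitTypePartition}. A coherent family $(t_n)$ of tableaux on the $R_n$ — that is, one actually arising from a single divisor class $D$ — is then equivalent data to a single filling $t$ on $\lambda(\bmu)$ whose restriction to each $R_n$ recovers $t_n$. Because the $k$-uniform displacement condition is local (it depends only on an individual box and its label), $t$ is $k$-uniformly displaced on $\lambda(\bmu)$ if and only if every restriction $t_n$ is $k$-uniformly displaced on $R_n$. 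Matching the tori on the two sides then identifies $\bigcap_n \bigcup_{t_n} \TT(t_n)$ with $\bigcup_t \TT(t)$ as claimed.

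\textbf{Main obstacle.} The serious work lies in Step~3, and specifically in showing that a divisor $D$ simultaneously lying in tori $\TT(t_n)$ for every twist $n$ produces coherent data assembling into a single tableau on $\lambda(\bmu)$, rather than an incompatible collection of rectangular tableaux. To do this cleanly I expect to need an explicit description of how the tropical fiber divisor $D_\pi$ acts on the chip-firing representatives of $\Pic(\Gamma)$ on the $k$-gonal chain of loops — the harmonic structure is exactly what forces the tableau entries in the overlapping regions of consecutive rectangles $R_n, R_{n+1}$ to agree — together with a verification that the shape of $\lambda(\bmu)$ built from Definition~\ref{def:SplitTypePartition} is the minimal partition accommodating all the rectangular constraints at once. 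Once the tableau entries are shown to be globally defined by $D$, the equality of the two unions of tori follows.
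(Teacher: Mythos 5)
Your overall route is the same as the paper's: $\overline{W}^{\bmu}(\Gamma)$ is \emph{by definition} the set of $D$ with $\rk(D+mg^1_k)\geq x_m(\bmu)-1$ for all $m$ (so your Step 1 is the definition, not something to establish), each twisted condition is governed by Pflueger's rectangular tableaux, and the content lies in assembling these into one tableau on $\lambda(\bmu)$. But Step 3 as written rests on two claims that are false. First, a divisor $D$ lying in some $\TT(t_m)$ for every $m$ is \emph{not} equivalent data to a single filling of $\lambda(\bmu)$ whose restriction to each rectangle recovers $t_m$: the witnessing tableaux $t_m$ are far from unique (a divisor generally lies in several tori for a fixed twist), and tableaux chosen independently for different twists need not agree on the overlaps of the rectangles; no chip-firing analysis will force them to. The paper's resolution is to make a canonical choice, setting $t(x,y)=\min_m t_m(x,y)$ over all $m$ with $(x,y)\in\lambda_m(\bmu)$. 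The resulting $t$ need not restrict to any of the original $t_m$, and one must then check separately that it is a tableau, that it satisfies $k$-uniform displacement, and that $D\in\TT(t)$, the last via Lemma~\ref{Lem:AltChar}.

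Second, the $k$-uniform displacement condition is not ``local'': it constrains every \emph{pair} of boxes carrying the same symbol, and such a pair may lie in rectangles $R_n$, $R_{n'}$ with neither box in their intersection, in which case neither rectangle's condition says anything about that pair. So ``$t$ is $k$-uniformly displaced if and only if every restriction $t_n$ is'' fails as a purely combinatorial statement. It does hold for the tableau built from a single divisor $D$, but the proof is geometric: one compares $\xi_j(D+mg^1_k)$ and $\xi_j(D+m'g^1_k)$ using the linearity in (\ref{eq:linear}) and the fact that $\widetilde{\xi}_j(g^1_k)\equiv 0\pmod{k}$ for every $j$. Your instinct that the coordinate structure of the chain of loops must enter is correct, but it enters to prove displacement \emph{across} rectangles, not to force the $t_m$ to agree on overlaps. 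Finally, the containment $\bigcup\TT(t)\subseteq\overline{W}^{\bmu}(\Gamma)$ (the easy direction, immediate from Definition~\ref{Def:SplittingTorus} and Theorem~\ref{thm:Classification}) should also be recorded.
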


We prove Theorem~\ref{thm:TropClassify} in Section~\ref{Sec:TSL}.  The remainder of the paper uses this classification to establish various geometric properties of the tropical splitting type loci $\overline{W}^{\bmu} (\Gamma)$.  For example, we compute the dimension of $\overline{W}^{\bmu} (\Gamma)$ in Section~\ref{Sec:Dim}, proving Theorem~\ref{thm:TropEquiDim}.  In Section~\ref{Sec:Connect}, we study the connectedness of tropical splitting type loci.

\begin{theorem}
\label{thm:connect}
Let $\Gamma$ be a $k$-gonal chain of loops of genus $g$.  If $g > \vert \bmu \vert$, then $\overline{W}^{\bmu} (\Gamma)$ is connected in codimension 1.
\end{theorem}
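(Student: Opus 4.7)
The strategy is to translate the geometric statement into a combinatorial one via Theorem~\ref{thm:TropClassify}. Combined with Theorem~\ref{thm:TropEquiDim}, the top-dimensional components of $\overline{W}^{\bmu}(\Gamma)$ are tori $\TT(t)$ of dimension $g - |\bmu|$ indexed by those $k$-uniform displacement tableaux $t$ on $\lambda(\bmu)$ with alphabet $[g]$ for which $\TT(t)$ has maximal dimension. Connectedness in codimension one is then equivalent to connectedness of the graph $\mathcal{G}$ whose vertices are these top-dimensional tableaux and whose edges join pairs $(t,t')$ such that $\TT(t)\cap\TT(t')$ has codimension one in $\overline{W}^{\bmu}(\Gamma)$.

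First I would pin down exactly when two such tori meet in codimension one. By analogy with the classical Brill--Noether case on chains of loops treated in \cite{CPJ}, I expect this to happen precisely when $t'$ is obtained from $t$ by an \emph{elementary swap}: altering the label in a single box to another label from $[g]$, keeping all other entries fixed, while remaining a $k$-uniform displacement tableau. This reduces the theorem to proving that $\mathcal{G}$, the graph on top-dimensional tableaux under elementary swaps, is connected when $g > |\bmu|$.

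For the connectedness of $\mathcal{G}$, my plan is to fix a canonical top-dimensional tableau $t_0$---for instance, the lexicographically smallest one, assigning the smallest legal label to each box in some prescribed order---and show that any top-dimensional tableau $t$ can be transformed into $t_0$ by a sequence of elementary swaps, each yielding a top-dimensional $k$-uniform displacement tableau. The hypothesis $g > |\bmu|$ is essential here: it guarantees that at least one label in $[g]$ is unused by $t$, providing the room needed to lower entries one at a time. Induction on a suitable weight function, such as $\sum_\square t(\square)$, should then terminate at $t_0$.

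The main obstacle is establishing that a weight-decreasing elementary swap is always available. The $k$-uniform displacement condition couples labels across different rows of $\lambda(\bmu)$, so replacing a single entry may violate the condition elsewhere. The heart of the proof will be a case analysis, depending on the shape of $\lambda(\bmu)$ and the position of the box whose label we wish to lower, identifying a valid swap at each step. This analysis is likely to be more intricate than the standard-tableau argument in \cite{CPJ}, because the tableaux here satisfy only the arithmetic displacement condition rather than strict row and column monotonicity.
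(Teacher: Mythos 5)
Your high-level strategy --- reduce to connectivity of a graph on top-dimensional tableaux under local moves --- matches the paper's, but there are two genuine gaps. First, your proposed elementary move (relabeling a \emph{single box}) is not the right one. In a $k$-uniform displacement tableau the same symbol typically occupies several boxes, and the top-dimensional tori are exactly the $\TT(t)$ for $k$-saturated tableaux, which have $\vert\bmu\vert$ distinct symbols spread over many boxes. Relabeling one box of such a $t$ either introduces an extra distinct symbol (so $\TT(t')$ has codimension one and is not another maximal torus) or, if the symbol occupied only that box, is a very restrictive move. The correct elementary move, as in the paper's ``swapping/cycling'' procedure, replaces \emph{all} boxes carrying a symbol $b$ by an adjacent unused symbol $a$ (the codimension-one intersection being witnessed by the intermediate tableaux in which only a proper subset of the $b$-boxes has been replaced). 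Your graph $\mathcal{G}$, as defined, is not obviously the right graph, and single-box moves alone will not connect it.

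Second, and more seriously, the heart of your argument --- that a weight-decreasing valid move is always available, so that induction on $\sum_{\square} t(\square)$ terminates at a canonical $t_0$ --- is exactly the hard part, and you leave it as an unexamined ``case analysis.'' The paper avoids this by exploiting structure you do not invoke: Theorem~\ref{thm:UnionOverPhi} identifies the maximal tori with pairs (symbol set, maximal chain in the poset $\mathcal{P}_k(\lambda(\bmu))$ of $k$-cores), and the induction is on the largest index at which the two maximal chains differ. After cycling out to free a symbol (this is where $g>\vert\bmu\vert$ enters), one inserts the target layer $\lambda'_j\smallsetminus\lambda'_{j-1}$ and then \emph{re-saturates} using the algorithm of Theorem~\ref{Thm:Equidim} to return to a top-dimensional torus, checking that the already-matched layers are undisturbed. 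Without the $k$-core/maximal-chain description and the saturation step, there is no evident reason your induction makes progress, nor that the intermediate tableaux you produce correspond to top-dimensional tori. As written, the proposal is a plausible outline whose central lemma is unproved and whose basic move needs to be corrected.
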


Unfortunately, the connectedness of $\overline{W}^{\bmu} (\Gamma)$ does not imply that of $\overline{W}^{\bmu}(C)$ for a general $(C,\pi) \in \mathcal{H}_{g,k}$.  Theorem~\ref{thm:connect} is nevertheless interesting for at least two reasons.  First, by \cite[Theorem~1.2]{Larson}, we know that the locally closed stratum $W^{\bmu} (C)$ is smooth for a general $(C,\pi) \in \mathcal{H}_{g,k}$, so it is irreducible if and only if it is connected.  Theorem~\ref{thm:connect} therefore suggests that $W^{\bmu} (C)$ is irreducible if it is positive dimensional, as predicted in \cite[Conjecture~1.2]{CPJ}.  Second, by \cite[Theorem~1]{CartwrightPayne}, the tropicalization of a variety is equidimensional and connected in codimension one, so Theorems~\ref{thm:TropEquiDim} and~\ref{thm:connect} can be seen as evidence that $\overline{W}^{\bmu} (\Gamma)$ is the tropicalization of $\overline{W}^{\bmu} (C)$ (see Conjecture~\ref{Conj:Surj} below).  

\subsection{Numerical Classes}

These geometric results follow from a careful study of $k$-staircases and $k$-uniform displacement tableaux.  These combinatorial objects are explored in Section~\ref{Sec:Partitions}.  Staircases belong to a wider class of partitions, known as \emph{$k$-cores}, which have been studied extensively in other contexts.  (See, for example, \cite{LLMSSZ}.)  The set $\mathcal{P}_k$ of $k$-cores is a ranked poset (Corollary~\ref{cor:PosetIsPoset}), with cover relations given by \emph{upward displacements} in the sense of \cite[Definition~6.1]{PfluegerNegativeBN}.  We write $\mathcal{P}_k (\lambda)$ for the interval below $\lambda \in \mathcal{P}_k$.  In Section~\ref{Sec:Count}, we use these observations to compute the cardinality of zero-dimensional tropical splitting type loci.

\begin{theorem}
\label{thm:count}
Let $\Gamma$ be a $k$-gonal chain of loops of genus $g$.  If $g = \vert \bmu \vert$, then $\vert \overline{W}^{\bmu} (\Gamma) \vert$ is equal to the number of maximal chains in $\mathcal{P}_k (\lambda (\bmu))$.
\end{theorem}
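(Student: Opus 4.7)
The plan is to reduce the enumeration to a combinatorial identity via Theorem~\ref{thm:TropClassify}, and then exhibit a bijection between the indexing tableaux and maximal chains in $\mathcal{P}_k(\lambda(\bmu))$.

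By Theorem~\ref{thm:TropClassify}, $\overline{W}^{\bmu}(\Gamma) = \bigcup \TT(t)$ as $t$ ranges over $k$-uniform displacement tableaux on $\lambda(\bmu)$ with alphabet $[g]$. When $g = |\bmu|$, Theorem~\ref{thm:TropEquiDim} forces this union to be zero-dimensional, so every constituent torus $\TT(t)$ must collapse to a single point. Combined with the fact, to be extracted from the explicit description of $\TT(t)$ in Section~\ref{Sec:COL}, that the assignment $t \mapsto \TT(t)$ is injective on the set of indexing tableaux (distinct tableaux give distinct translates of coordinate subtori in $\Pic(\Gamma)$), this identifies $|\overline{W}^{\bmu}(\Gamma)|$ with the number of $k$-uniform displacement tableaux on $\lambda(\bmu)$ with alphabet $[g]$.

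The second step is to construct a bijection between these tableaux and maximal chains in $\mathcal{P}_k(\lambda(\bmu))$. Given a tableau $t$, define the nested sequence
\[
\emptyset = \lambda^{(0)} \subseteq \lambda^{(1)} \subseteq \cdots \subseteq \lambda^{(g)} = \lambda(\bmu),
\]
where $\lambda^{(i)}$ consists of the boxes of $t$ with entry at most $i$. I would show that the $k$-uniform displacement hypothesis on $t$ forces every $\lambda^{(i)}$ to be a $k$-core and consecutive terms to be either equal or related by an upward displacement, so that after removing repetitions the sequence becomes a chain in $\mathcal{P}_k$. The ranked structure of $\mathcal{P}_k$ established in Corollary~\ref{cor:PosetIsPoset}, combined with the equality $|\lambda(\bmu)| = |\bmu| = g$ from Definition~\ref{def:SplitTypePartition}, should force every inclusion to be strict, so the resulting chain has length $g$ and is therefore maximal. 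Conversely, a maximal chain in $\mathcal{P}_k(\lambda(\bmu))$ determines a tableau by labeling the boxes introduced at step $i$ with the symbol $i$.

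The main obstacle will be matching the two combinatorial definitions precisely: showing that $k$-uniform displacement of $t$ in the sense of Definition~\ref{Def:Displacement} is equivalent to the assertion that every partial sub-shape $\lambda^{(i)}$ is a $k$-core and each strict containment $\lambda^{(i-1)} \subsetneq \lambda^{(i)}$ is an upward displacement in the sense of \cite[Definition~6.1]{PfluegerNegativeBN}. This will require a careful comparison of the two definitions, together with the ranked structure of $\mathcal{P}_k$, to rule out the possibility that a single cover relation adds more than one box in this equinumerous setting. Once this dictionary is in place, the bijection, and therefore the count, follows.
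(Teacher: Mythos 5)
Your overall architecture --- identify the points of $\overline{W}^{\bmu}(\Gamma)$ with a class of tableaux on $\lambda(\bmu)$ and then biject those tableaux with maximal chains in $\mathcal{P}_k(\lambda(\bmu))$ --- is the same as the paper's, but two of your steps have genuine gaps.

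First, the injectivity of $t \mapsto \TT(t)$ cannot simply be ``extracted from the explicit description'': distinct $k$-uniform displacement tableaux frequently define the \emph{same} coordinate subtorus. By Lemma~\ref{Lem:AltChar}, $\TT(t)$ depends only on the set of symbols of $t$ and, for each symbol, the (suitably normalized) diagonal class of the boxes carrying it; two tableaux that place a symbol in different boxes of the same diagonal give identical tori, and Lemma~\ref{lem:contain} is precisely a catalogue of such coincidences. The paper avoids needing injectivity on all tableaux: it first reduces, via Theorem~\ref{thm:UnionOverPhi}, to the union over $k$-saturated tableaux, and then proves injectivity only of the map from maximal chains to divisor classes, using the poset structure --- if two maximal chains first differ at step $j$, the two covers of $\lambda_{j-1}$ are upward displacements with respect to incongruent classes $a \not\equiv b \pmod{k}$, so the resulting divisor classes have $\xi_j$ in different congruence classes. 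Your proposal supplies no substitute for this argument.

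Second, the dictionary between tableaux and maximal chains is the crux, and your sketch of it goes wrong in two places. The asserted equality ``$\vert \lambda(\bmu) \vert = \vert \bmu \vert$'' is false if the left side means the number of boxes: $\vert \bmu \vert = \rho_k(\lambda(\bmu))$ is the number of \emph{symbols} (Corollary~\ref{cor:SplitTypeMin}), and $\lambda(\bmu)$ typically has many more boxes (e.g.\ $\lambda(-3,-1,1)$ has $8$ boxes while $\vert \bmu \vert = 5$), so the chain does not add one box per step. Relatedly, the difficulty is not to ``rule out the possibility that a single cover relation adds more than one box'' --- covers in $\mathcal{P}_k$ routinely add several boxes --- but the opposite: you must show that for a tableau using only $\rho_k(\lambda(\bmu))$ symbols, each level set $\lambda^{(i)} \smallsetminus \lambda^{(i-1)}$ consists of \emph{all} outside corners of $\lambda^{(i-1)}$ in the relevant diagonal, so that each $\lambda^{(i)}$ is again a $k$-core and each step is an honest cover relation. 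This is exactly the content the paper isolates in Theorem~\ref{Thm:Equidim} and packages into Theorem~\ref{thm:UnionOverPhi}, whose use lets one index the union by $k$-saturated tableaux from the outset and never prove that every minimal tableau is saturated. To repair your route, either prove that statement directly or replace Theorem~\ref{thm:TropClassify} by Theorem~\ref{thm:UnionOverPhi} as your starting point.
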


The number of maximal chains in $\mathcal{P}_k (\lambda (\bmu))$ has received significant interest in the combinatorics and representation theory literature, and has connections to the affine symmetric group.  More precisely, there is a bijection between such maximal chains and reduced words in the affine symmetric group \cite{LapointeMorse}.  For this reason, several of our results have equivalent formulations in terms of these groups (see Remarks~\ref{Rmk:Dim} and~\ref{Rmk:Connect}).  There is currently no known closed form expression for these numbers, but they satisfy a simple recurrence (Lemma~\ref{lem:recur}) that allows one to compute a given number in polynomial time (Algorithm~\ref{Alg:Recur}).

Theorem~\ref{thm:count} has implications beyond the zero-dimensional case.  In \cite[Lemma~5.4]{Larson}, Larson shows that the numerical class of $\overline{W}^{\bmu} (C)$ in $\Pic (C)$ is of the form $a_{\bmu} \Theta^{\vert \bmu \vert}$, where the coefficient $a_{\bmu}$ is independent of the genus.  To compute the coefficient $a_{\bmu}$, therefore, it suffices to compute the cardinality of $\overline{W}^{\bmu} (C)$ in the case where $g = \vert \bmu \vert$.  In this way, Theorem~\ref{thm:count} suggests the following conjecture.

\begin{conjecture}
\label{Conj:Class}
Let $(C,\pi) \in \mathcal{H}_{g,k}$ be general.  The numerical class of $\overline{W}^{\bmu} (C)$ in $\Pic^{d(\bmu)} (C)$ is
\[
\left[ \overline{W}^{\bmu} (C) \right] = \frac{1}{\vert \bmu \vert !} \cdot \alpha (\mathcal{P}_k (\lambda (\bmu))) \cdot \Theta^{\vert \bmu \vert} ,
\]
where $\alpha (\mathcal{P})$ denotes the number of maximal chains in the poset $\mathcal{P}$.
\end{conjecture}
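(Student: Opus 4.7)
The plan is to reduce the conjecture to counting points in the zero-dimensional case, use Theorem~\ref{thm:count} to identify this count with $\alpha(\mathcal{P}_k(\lambda(\bmu)))$ tropically, and then transfer the tropical count to the algebraic setting. The first step is essentially formal: by \cite[Lemma~5.4]{Larson}, $\bigl[\overline{W}^{\bmu}(C)\bigr] = a_{\bmu}\,\Theta^{\vert \bmu \vert}$ with $a_{\bmu}$ depending only on $\bmu$ and $k$, so it suffices to compute $a_{\bmu}$ in a single genus. Choosing $g = \vert \bmu \vert$, the locus $\overline{W}^{\bmu}(C)$ becomes zero-dimensional by Theorem~\ref{Thm:MainThm}, and the Poincar\'e formula $\Theta^g = g! \cdot [\mathrm{pt}]$ on a $g$-dimensional Jacobian gives
\[
\vert \overline{W}^{\bmu}(C) \vert = a_{\bmu} \cdot \vert \bmu \vert!.
\]
The conjecture therefore reduces to proving $\vert \overline{W}^{\bmu}(C) \vert = \alpha(\mathcal{P}_k(\lambda(\bmu)))$ in the case $g = \vert \bmu \vert$.

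In this zero-dimensional range, Theorem~\ref{thm:count} already supplies the tropical identity $\vert \overline{W}^{\bmu}(\Gamma) \vert = \alpha(\mathcal{P}_k(\lambda(\bmu)))$ for the $k$-gonal chain of loops $\Gamma$. The natural bridge is to choose a one-parameter smoothing $\cC \to \Spec R$ over a discrete valuation ring whose generic fiber is a general $(C,\pi) \in \mathcal{H}_{g,k}$ and whose special fiber has dual graph $\Gamma$ (with the appropriate edge lengths). The tropicalization map of Section~\ref{Sec:COL} then sends each point of $\overline{W}^{\bmu}(C)$ to a point of $\overline{W}^{\bmu}(\Gamma)$. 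If this specialization map can be shown to be a bijection with reduced fibers, the desired cardinality equality — and hence the conjecture — follows.

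The main obstacle is establishing this \emph{multiplicity-one lifting}. In one direction, one must show that no two algebraic points in $\overline{W}^{\bmu}(C)$ specialize to the same tropical point; in the other, each of the $\alpha(\mathcal{P}_k(\lambda(\bmu)))$ tropical points — equivalently, each maximal chain in $\mathcal{P}_k(\lambda(\bmu))$, or each reduced word in the affine symmetric group via \cite{LapointeMorse} — must lift to a unique algebraic point on the generic fiber. The existence side of this lifting is essentially the content of Conjecture~\ref{Conj:Surj}; the uniqueness and multiplicity-one side additionally requires verifying that the scheme-theoretic tropicalization is reduced at each point. One concrete avenue is to analyze Larson's pushforward degeneracy loci locally at each tropical point, interpreting the transverse determinantal strata in terms of the cover relations of $\mathcal{P}_k(\lambda(\bmu))$ and showing that each contributes with multiplicity one. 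A purely algebraic alternative is to bypass the tropical model and attempt to compute $a_{\bmu}$ directly via intersection theory on the Quot scheme or Grassmann bundle associated to $\pi_* L$, and then to match the resulting Chern-class formula with the combinatorial count of maximal chains — but this appears to require a new combinatorial identity for $\alpha(\mathcal{P}_k(\lambda(\bmu)))$ that is not currently available, which is why I would pursue the tropical specialization route first.
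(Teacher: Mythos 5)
The statement you are asked to prove is labeled a \emph{conjecture} in the paper, and the paper does not prove it; it only assembles evidence (Larson's class computation, Theorem~\ref{thm:count}, and the worked examples of Section~\ref{Sec:Count}). Your proposal does not close the gap either, and you are candid about where it fails.

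Your first reduction is correct and is exactly the paper's own motivation for formulating the conjecture: by \cite[Lemma~5.4]{Larson} the class is $a_{\bmu}\Theta^{\vert\bmu\vert}$ with $a_{\bmu}$ independent of the genus, the Poincar\'e formula in genus $g=\vert\bmu\vert$ gives $\vert\overline{W}^{\bmu}(C)\vert = a_{\bmu}\cdot\vert\bmu\vert!$, and Theorem~\ref{thm:count} identifies $\vert\overline{W}^{\bmu}(\Gamma)\vert$ with $\alpha(\mathcal{P}_k(\lambda(\bmu)))$. The genuine gap is the bridge from $\Gamma$ back to $C$: you need the tropicalization map $\Trop:\overline{W}^{\bmu}(C)\to\overline{W}^{\bmu}(\Gamma)$ to be a bijection when $g=\vert\bmu\vert$. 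That is precisely Conjecture~\ref{Conj:Surj}, which the paper explicitly states is open --- indeed open ``in many cases where Conjecture~\ref{Conj:Class} is known to hold,'' and in particular the injectivity/multiplicity-one direction is not known even for the maximal splitting types where surjectivity has been established. Proposition~\ref{prop:specialization} gives only the containment $\Trop(\overline{W}^{\bmu}(C))\subseteq\overline{W}^{\bmu}(\Gamma)$, which by itself yields merely the upper bound $a_{\bmu}\cdot\vert\bmu\vert!\ \le$ (number of tropical points) \emph{if} one also knew injectivity, and nothing at all in the direction needed for a lower bound. So your proposal reduces one open conjecture to another open conjecture; the ``analyze the degeneracy loci locally at each tropical point'' and ``compute on the Quot scheme'' suggestions are reasonable research directions but are not carried out, and no step of the proposal supplies the missing lifting argument.
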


At the end of Section~\ref{Sec:Count}, we provide evidence for Conjecture~\ref{Conj:Class}, in the form of numerous examples where it holds.  We also compute the number of maximal chains in $\mathcal{P}_k (\lambda (\bmu))$ for some infinite families of splitting types where the class of $\overline{W}^{\bmu} (C)$ is unknown.  For such families, these numbers form well-known integer sequences, including binomial coefficients (Examples~\ref{Ex:Trigonal} and~\ref{Ex:OneRowOneCol}), geometric sequences (Example~\ref{Ex:Four}), Catalan numbers (Example~\ref{Ex:Catalan}), and Fibonacci numbers (Example~\ref{Ex:Five}).  Conjecture~\ref{Conj:Class} would be implied by the following.

\begin{conjecture}
\label{Conj:Surj}
Let $\Gamma$ be a $k$-gonal chain of loops, and let $C$ be a curve of genus $g$ and gonality $k$ over a nonarchimedean field $K$ with skeleton $\Gamma$.  Then the tropicalization map
\[
\Trop : \overline{W}^{\bmu} (C) \to \overline{W}^{\bmu} (\Gamma)
\]
is surjective.  Moreover, if $g = \vert \bmu \vert$, then it is a bijection.
\end{conjecture}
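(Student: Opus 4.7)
The plan is to mirror the strategy developed for classical Brill--Noether loci $W^r_d$ in work of Cools--Draisma--Payne--Robeva and Jensen--Payne, adapted to incorporate the pushforward structure provided by the degree $k$ cover $\pi : C \to \PP^1$.  The containment $\Trop(\overline{W}^{\bmu}(C)) \subseteq \overline{W}^{\bmu}(\Gamma)$ is provided by Section~\ref{Sec:COL}, so the real task is to produce, for each tropical point, an algebraic lift, and in the zero-dimensional case to control the fiber size.

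For surjectivity, I would argue component by component.  By Theorem~\ref{thm:TropClassify}, every irreducible component of $\overline{W}^{\bmu}(\Gamma)$ is a torus $\TT(t)$ associated to a $k$-uniform displacement tableau $t$.  For each such $t$, the goal is to produce a single lift of some point of $\TT(t)$ inside $\overline{W}^{\bmu}(C)$: starting from the combinatorial data of $t$, one builds an explicit limit line bundle on a totally degenerate regular semistable model of $C$, and then invokes a smoothing theorem to obtain a line bundle $L$ on $C$ itself whose pushforward has splitting type dominated by $\bmu$ and whose tropicalization lies in $\TT(t)$.  Since $\overline{W}^{\bmu}(C)$ has the expected dimension by Theorem~\ref{Thm:MainThm}, and since tropicalizations are equidimensional by Bieri--Groves of the same dimension as $\TT(t)$ by Theorem~\ref{thm:TropEquiDim}, a dimension count forces the induced map to surject onto all of $\TT(t)$.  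Theorem~\ref{thm:connect} may additionally be useful to propagate lifts consistently between components meeting in codimension one.

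For the bijection when $g = \vert \bmu \vert$, both sides are finite.  By Theorem~\ref{thm:count}, the right-hand side has cardinality $\alpha(\mathcal{P}_k(\lambda(\bmu)))$.  I would attempt to prove bijectivity directly via a local analysis at each point of $\overline{W}^{\bmu}(\Gamma)$: endow $\overline{W}^{\bmu}(C)$ with the scheme structure coming from Larson's interpretation as a degeneracy locus for the Tschirnhausen bundle, show that every point of $\overline{W}^{\bmu}(\Gamma)$ occurs with tropical multiplicity one, and then apply a Sturmfels--Tevelev-type multiplicity formula to conclude that each fiber of $\Trop$ is a reduced singleton.  As a byproduct this would establish Conjecture~\ref{Conj:Class}.

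The principal obstacle is the lifting step.  For $W^r_d$ one has a rich theory of limit linear series (Eisenbud--Harris, Osserman, Amini--Baker) to produce smoothings, but here one must smooth not merely a line bundle of prescribed rank but the full splitting type of its pushforward.  This requires a refinement of existing smoothing technology that tracks how the filtration of $\pi_*L$ degenerates across the special fiber of the semistable model.  Even granting a satisfactory lifting theorem, the multiplicity-one assertion underlying the bijection demands a careful infinitesimal analysis of Larson's degeneracy locus at each tropical point $\TT(t)$, matching the normal directions of the degeneracy stratification with the coordinate hyperplanes defining the torus $\TT(t)$.
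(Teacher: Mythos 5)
The statement you are addressing is Conjecture~\ref{Conj:Surj}, which the paper explicitly leaves open: the authors prove only the containment $\Trop(\overline{W}^{\bmu}(C)) \subseteq \overline{W}^{\bmu}(\Gamma)$ (Proposition~\ref{prop:specialization}) and cite the known cases ($k = \lfloor \frac{g+3}{2}\rfloor$ via Cartwright--Jensen--Payne, and the maximal splitting types via \cite{CPJ} and \cite{JensenRanganthan}). Your proposal is a research program rather than a proof, and you candidly identify the missing ingredient yourself: a lifting theorem that smooths not just a line bundle of prescribed rank but the entire splitting type of its pushforward. No such theorem is supplied or available, and that is exactly why the statement remains a conjecture. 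Until that step exists, nothing after it in your argument can be carried out.

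Beyond the acknowledged gap, two further steps would fail as written. First, the dimension count does not upgrade ``one lift landing in $\TT(t)$'' to surjectivity onto $\TT(t)$: the image $\Trop(\overline{W}^{\bmu}(C))$ is a polyhedral set of dimension $g - \vert\bmu\vert$ contained in the union of the tori $\TT(t)$, but there is no a priori reason it must contain any maximal subtorus entirely rather than a proper full-dimensional polyhedral subset of it; in the known cases one lifts every point of every $\TT(t)$ individually, not one point per component. Second, the Sturmfels--Tevelev multiplicity formula concerns subvarieties of algebraic tori, whereas $\overline{W}^{\bmu}(C)$ sits inside $\Pic^{d(\bmu)}(C)$, an abelian variety; there is no off-the-shelf multiplicity formula for tropicalizations of subschemes of abelian varieties that would let you conclude each fiber of $\Trop$ is a reduced singleton when $g = \vert\bmu\vert$. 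Indeed the paper notes that bijectivity in the zero-dimensional case is unknown even for the splitting types where surjectivity has been established.
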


Conjecture~\ref{Conj:Surj} is known to hold in several important cases.  It is the main result of \cite{CartwrightJensenPayne} in the case where $\Gamma$ has generic edge lengths (or equivalently, when $k = \lfloor \frac{g+3}{2} \rfloor$).  The main results of \cite{CPJ} and \cite{JensenRanganthan} combined show that the tropicalization map is surjective for the ``maximal'' splitting types $\bmu_{\alpha}$ of \cite[Definition~2.5]{CPJ}.  We do not, however, know that it is bijective in the zero-dimensional case.  Conjecture~\ref{Conj:Surj} remains open in many cases where Conjecture~\ref{Conj:Class} is known to hold.

\subsection*{Acknowledgements}
We would like to thank Melody Chan, Hannah Larson, and Yoav Len, who each provided helpful insights during their visits to the University of Kentucky in 2019--2020.  We thank Yoav Len, Sam Payne, and Dhruv Ranganathan for helpful comments on an earlier draft of this paper, and Gavril Farkas for telling us about his paper \cite{FarkasRimanyi}, which was the inspiration for Example~\ref{Ex:Quadric}.  We thank Eric Larson, Hannah Larson, and Isabel Vogt for pointing us toward the existing literature on $k$-cores and the affine symmetric group.  This work was supported by NSF DMS-1601896.

\section{Preliminaries}

\subsection{Partitions and Tableaux}

Throughout, we use the convention that $\mathbb{N}$ denotes the positive integers.  By a slight abuse of terminology, we use the term \emph{partition} to refer to the Ferrers diagram of a partition.

\begin{definition}
A \emph{partition} is a finite subset $\lambda \subset \mathbb{N}^2$ with the property that, if $(x,y) \in \lambda$, then 
\begin{enumerate}
\item either $x=1$ or $(x-1,y) \in \lambda$, and 
\item  either $y=1$ or $(x,y-1) \in \lambda$.
\end{enumerate}
\end{definition}

It is standard to depict a partition as a set of boxes, with a box in position $(x,y)$ if $(x,y) \in \lambda$.  We follow the English convention, so that the box $(1,1)$ appears in the upper lefthand corner.  Given a partition $\lambda$, we define its \emph{transpose} to be
\[
\lambda^T := \{ (x,y) \in \mathbb{N}^2 \mbox{ } \vert \mbox{ } (y,x) \in \lambda \} .
\]
The corners of a partition will play an important role in our discussion.

\begin{definition}
Let $\lambda$ be a partition.  A box $(x,y) \in \lambda$ is called an \emph{inside corner} if $(x+1,y) \notin \lambda$ and $(x,y+1) \notin \lambda$.  A box $(x,y) \notin \lambda$ is called an \emph{outside corner} if
\begin{enumerate}
\item  either $x=1$ or $(x-1,y) \in \lambda$, and
\item  either $y=1$ or $(x,y-1) \in \lambda$.
\end{enumerate}
\end{definition}

In other words, a box $(x,y) \in \lambda$ is an inside corner if $\lambda \smallsetminus (x,y)$ is a partition, and a box $(x,y) \notin \lambda$ is an outside corner if $\lambda \cup (x,y)$ is a partition.  

Given a positive integer $g$, we write $[g]$ for the finite set $\{ 1, 2, \ldots , g \}$, and let ${{[g]}\choose{n}}$ denote the set of size-$n$ subsets of $[g]$. A \emph{tableau} on a partition $\lambda$ with alphabet $[g]$ is a function $t: \lambda \to [g]$ satisfying:
\begin{align*}
t(x,y) &> t(x,y-1) \mbox{ for all } (x,y) \in \lambda \mbox{ with } y > 1, \mbox{ and} \\
t(x,y) &> t(x-1,y) \mbox{ for all } (x,y) \in \lambda \mbox{ with } x > 1 .
\end{align*}
We depict a tableau by filling each box of $\lambda$ with an element of $[g]$.  The tableau condition is satisfied if the symbols in each row are increasing and the symbols in each column are increasing.  We write $YT(\lambda)$ for the set of tableaux on the partition $\lambda$.  Given a tableau $t$ on $\lambda$, we define its \emph{transpose} to be the tableau $t^T$ on $\lambda^T$ given by
\[
t^T (x,y) = t(y,x) \mbox{ for all } (x,y) \in \lambda^T .
\]

We will be primarily concerned with the combinatorics of certain special kinds of tableaux, called $k$-uniform displacement tableaux.

\begin{definition} \cite[Definition~2.5]{PfluegerkGonal}
\label{Def:Displacement}
A tableau $t$ on a partition $\lambda$ is called a \emph{$k$-uniform displacement tableau} if, whenever $t(x,y) = t(x',y')$, we have $y-x \equiv y'-x' \pmod{k}$.
\end{definition}

We write $YT_k (\lambda)$ for the set of $k$-uniform displacement tableaux on the partition $\lambda$.  The $k$-uniform displacement condition is satisfied if the lattice distance (or taxicab distance) between any two boxes containing the same symbol is a multiple of $k$.  For example, Figure~\ref{Fig:3Uniform} depicts a 3-uniform displacement tableau with alphabet $[5]$.  Note that the two boxes containing the symbol 3 have lattice distance 3, and any two of the three boxes containing the symbol 5 have lattice distance a multiple of 3.

\begin{figure}[H]
\begin{ytableau}
1 & 3 & 4 & 5\\
2 & 5\\
3\\
5\\
\end{ytableau}

\caption{A 3-uniform displacement tableau with alphabet $[5]$.}
\label{Fig:3Uniform}
\end{figure}

\subsection{Splitting Types}
\label{Sec:Split}

Let $\pi: C \to \PP^1$ be a branched cover of degree $k$, where the domain has genus $g$.  If $L$ is a line bundle on $C$, then its pushforward $\pi_* L$ is a vector bundle on $\PP^1$ of rank $k$.  Every such vector bundle splits as a direct sum of line bundles:
\[
\pi_* L = \cO (\mu_1 ) \oplus \cdots \oplus \cO (\mu_k) .
\]
Throughout, we order the integers $\mu_i$ from smallest to largest, i.e. $\mu_1 \leq \cdots \leq \mu_k$.  We refer to the vector $\bmu = (\mu_1 , \ldots , \mu_k ) \in \ZZ^k$ as the \emph{splitting type} of $L$, and write $\pi_* L \cong \cO (\bmu)$.  Many natural invariants of the line bundle $L$ are determined by its splitting type.
\begin{align}
\label{eq:twist}
h^0 (C, L \otimes \pi^* \cO (m)) &= x_m (\bmu) := \sum_{i=1}^k \max \{ 0, \mu_i + m + 1 \} \\
h^1 (C, L \otimes \pi^* \cO (m)) &= y_m (\bmu) := \sum_{i=1}^k \max \{ 0, -\mu_i - m - 1 \} \nonumber \\
\deg L = d (\bmu) &:= g-1 + \sum_{i=1}^k (\mu_i + 1) . \nonumber
\end{align}

We define the \emph{splitting type loci}
\begin{align*}
W^{\bmu} (C) &= \left\{ L \in \Pic (C) \mbox{ }  \vert \mbox{ } \pi_* L \cong \cO (\bmu) \right\} \\
\overline{W}^{\bmu} (C) &= \left\{ L \in \Pic^{d(\bmu)} (C) \mbox{ } \vert \mbox { } h^0 (C, L \otimes \pi^* \cO (m)) \geq x_m (\bmu) \mbox{ for all } m \right\} . 
\end{align*}

Equation (\ref{eq:twist}) above show that $W^{\bmu} (C)$ is contained in $\overline{W}^{\bmu} (C)$.  The strata $\overline{W}^{\bmu} (C)$ are closed, whereas the strata $W^{\bmu}(C)$ are locally closed.  It is not necessarily the case that $\overline{W}^{\bmu} (C)$ is the closure of $W^{\bmu} (C)$.  This is the case, however, when all splitting type loci have the expected dimension.  (See \cite[Lemma~2.1]{Larson}.)

The expected codimension of $\overline{W}^{\bmu} (C)$ in $\Pic^{d(\bmu)} (C)$ is given by the \emph{magnitude}
\[
\vert \bmu \vert := \sum_{i < j} \max \{ 0, \mu_j - \mu_i - 1 \} .
\]

A consequence of (\ref{eq:twist}) is that the sum of the $\ell$ largest entries of $\bmu$ is an upper semicontinuous invariant.  This defines a natural partial order on splitting types.  Specifically, given two splitting types $\bmu$ and $\blam$ such that $d(\bmu) = d(\blam)$, we say that $\bmu \leq \blam$ if
\[
\mu_1 + \cdots + \mu_{\ell} \leq \lambda_1 + \cdots + \lambda_{\ell} \mbox{ for all } \ell \leq k .
\]
If one considers a splitting type to be a partition of $d(\bmu)$ with possibly negative parts, then this partial order is the usual dominance order on partitions.  This partial order has the following interpretation.

\begin{lemma}
\label{lem:SplitTypeContainment}
If $\bmu \leq \blam$, then $x_m (\bmu) \geq x_m (\blam)$ for all $m$, hence $\overline{W}^{\bmu}(C) \subseteq \overline{W}^{\blam}(C)$.
\end{lemma}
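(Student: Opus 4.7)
The plan is to establish the pointwise inequality $x_m(\bmu) \geq x_m(\blam)$ for every $m$; the containment of splitting type loci will then follow immediately, since $d(\bmu) = d(\blam)$ ensures $\Pic^{d(\bmu)}(C) = \Pic^{d(\blam)}(C)$, and any $L \in \overline{W}^{\bmu}(C)$ satisfies $h^0(C, L \otimes \pi^* \cO(m)) \geq x_m(\bmu) \geq x_m(\blam)$ for every $m$, which is precisely the defining condition for membership in $\overline{W}^{\blam}(C)$.

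For the numerical inequality, write $a_i := \mu_i + m + 1$ and $b_i := \lambda_i + m + 1$, so that the hypothesis $\bmu \leq \blam$ translates into $a_1 + \cdots + a_\ell \leq b_1 + \cdots + b_\ell$ for every $\ell \leq k$, with equality at $\ell = k$ coming from $d(\bmu) = d(\blam)$. The target inequality $\sum_i \max\{0, a_i\} \geq \sum_i \max\{0, b_i\}$, via the identity $\max\{0, t\} = t + \max\{0, -t\}$ together with $\sum a_i = \sum b_i$, is equivalent to $\sum_i \max\{0, -a_i\} \geq \sum_i \max\{0, -b_i\}$. Let $p$ and $q$ be the largest indices with $a_p < 0$ and $b_q < 0$ respectively (setting $p = 0$ or $q = 0$ if no such index exists). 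Since both sequences are nondecreasing, $\sum_i \max\{0, -a_i\} = -(a_1 + \cdots + a_p)$ and similarly for $b$, so the inequality reduces to $a_1 + \cdots + a_p \leq b_1 + \cdots + b_q$. A case split on $p \leq q$ (where $a_{p+1}, \ldots, a_q$ are nonnegative) versus $p > q$ (where $a_{q+1}, \ldots, a_p$ are negative), combined in each case with the dominance bound at $\ell = q$, closes out this step.

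No step poses a real obstacle; the argument is a short majorization calculation followed by a definition chase. One could equivalently recognize the assertion as Karamata's majorization inequality applied to the convex piecewise linear function $t \mapsto \max\{0, t + m + 1\}$, after identifying the paper's partial order with the reverse of the classical dominance order on the sequences sorted from largest to smallest. The only care needed is to keep these sign conventions consistent throughout.
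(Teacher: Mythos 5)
Your proof is correct and follows essentially the same route as the paper's: both exploit the fact that the entries are sorted so that $\sum_i \max\{0,\mu_i+m+1\}$ is a tail sum, and both invoke the dominance inequality at the single cut index where the sign changes. The paper argues directly with the nonnegative tail (letting $J$ be the minimal index with $\lambda_J+m+1\geq 0$) in one step, whereas you pass through the identity $\max\{0,t\}=t+\max\{0,-t\}$ and a two-case comparison of the cut indices; this is only a cosmetic reorganization of the same computation.
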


\begin{proof}
Let $m$ be an integer and $J$ the minimal index such that $\lambda_J + m + 1 \geq 0$. 
Since $\bmu \leq \blam$, we have
\[
\mu_1 + \cdots + \mu_k= \lambda_1 + \cdots + \lambda_k
\] 
\[
\mu_1 + \cdots + \mu_{J-1} \leq \lambda_1 + \cdots + \lambda_{J-1},
\]
which together imply that
\[
\mu_J+ \cdots + \mu_k \geq \lambda_J + \cdots + \lambda_k.
\]
Hence
\begin{align*}
\sum_{i=1}^k \max \lbrace 0, \lambda_i + m + 1 \rbrace &= ( \lambda_J + m + 1 ) + \cdots + ( \lambda_k + m + 1 ) \\
&\leq ( \mu_J + m + 1 ) + \cdots + ( \mu_k + m + 1 ) \leq \sum_{i=1}^k \max \lbrace 0, \mu_i + m + 1 \rbrace .
\end{align*}
\end{proof}

\subsection{Chains of Loops}
\label{Sec:COL}

We briefly discuss the theory of special divisors on chains of loops from \cite{CDPR, PfluegerkGonal, PfluegerCycles, JensenRanganthan}.  For a broader review of divisors on tropical curves, we refer the reader to \cite{Baker, BakerJensen}.  For our purposes however, we will only require the material surveyed here.

Throughout, we let $\Gamma$ be a chain of $g$ loops with bridges, as pictured in Figure~\ref{Fig:ChainOfLoops}.  Let $m_j$ be the length of the bottom edge of the $j$th loop and $\ell_j$ the length of the top edge.

\begin{figure}[h!]
\begin{tikzpicture}

\draw (0,0) circle (1);
\draw (1,0)--(2,0);
\draw (3,0) circle (1);
\draw (4,0)--(5,0);
\draw (6,0) circle (1);
\draw (7,0)--(8,0);
\draw (9,0) circle (1);
\draw (10,0)--(11,0);
\draw (12,0) circle (1);

\draw [<->] (7.15,0.5) arc[radius = 1.15, start angle=10, end angle=170];
\draw [<->] (7.15,-0.5) arc[radius = 1.15, start angle=-9, end angle=-173];

\draw (6,1.75) node {\footnotesize$\ell_j$};
\draw (6,-1.75) node {\footnotesize$m_j$};

\draw [ball color=black] (-1,0) circle (0.5mm);
\draw [ball color=black] (1,0) circle (0.5mm);

\draw [ball color=black] (2,0) circle (0.5mm);
\draw [ball color=black] (4,0) circle (0.5mm);

\draw [ball color=black] (5,0) circle (0.5mm);
\draw [ball color=black] (7,0) circle (0.5mm);

\draw [ball color=black] (8,0) circle (0.5mm);
\draw [ball color=black] (10,0) circle (0.5mm);

\draw [ball color=black] (11,0) circle (0.5mm);
\draw [ball color=black] (13,0) circle (0.5mm);
\end{tikzpicture}
\caption{The chain of loops $\Gamma$.}
\label{Fig:ChainOfLoops}
\end{figure}
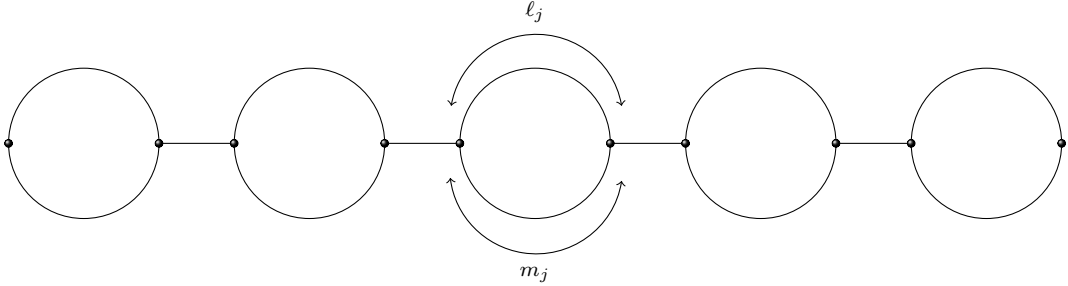

\begin{definition} \cite[Definition~1.9]{PfluegerCycles}
\label{Def:TorsionOrder}
If $\ell_j + m_j$ is an irrational multiple of $m_j$, then the $j$th \emph{torsion order} $\tau_j$ of $\Gamma$ is 0.  Otherwise, we define $\tau_j$ to be the minimum positive integer such that $\tau_j m_j$ is an integer multiple of $\ell_j + m_j$.  
\end{definition}

The \emph{$k$-gonal chain of loops of genus $g$} referred to in the introduction is the graph $\Gamma$ with the following torsion orders:

\begin{displaymath}
\tau_j := \left\{ \begin{array}{ll}
0 & \textrm{if $j<k$ or $j>g-k+1$} \\
k & \textrm{otherwise.}
\end{array}\right.
\end{displaymath}

\begin{remark}
Some of our arguments in Section~\ref{Sec:TSL} would be simplified if we assumed instead that $\tau_j = k$ for all $j$.  Although this does not affect the Brill-Noether theory of $\Gamma$, we prefer the torsion orders above because the family of chains of loops with these torsion orders has the same dimension as the Hurwitz space $\mathcal{H}_{g,k}$.
\end{remark}

The Jacobian of $\Gamma$ has two natural systems of coordinates.  The first uses the theory of break divisors from \cite{MikhalkinZharkov08,ABKS}.  On the $j$th loop, define $\langle \xi \rangle_j$ to be the point of distance $\xi m_j$ from the righthand vertex in the counterclockwise direction.  Every divisor class $D$ of degree $d$ has a unique \emph{break divisor} representative of the form
\[
(d-g)\langle 0 \rangle_g + \sum_{j=1}^g \langle \xi_j (D) \rangle_j . 
\]
Because this representative is unique, the functions $\xi_j \colon \Pic^d (\Gamma) \to \mathbb{R} / \left( \frac{m_j + \ell_j}{m_j} \right) \mathbb{Z}$ act as a system of coordinates on $\Pic^d (\Gamma)$.

Alternatively, define an orientation on $\Gamma$ by orienting each of the loops counterclockwise, and let $\omega_j$ be the harmonic 1-form supported on the $j$th loop with weight 1.  Given a divisor class $D$ on $\Gamma$, define
\[
\widetilde{\xi}_j (D) := \frac{1}{m_j} \int_{\langle 0 \rangle_g}^D \omega_j .
\]
By the tropical Abel-Jacobi theorem \cite{BakerNorine}, since the set of 1-forms $\omega_1 , \ldots , \omega_g$ is a basis for $\Omega (\Gamma)$, the functions $\widetilde{\xi}_j \in \Omega (\Gamma)^* / H_1 (\Gamma, \mathbb{Z})$ form a system of coordinates on $\Jac (\Gamma)$.  In our combinatorial arguments, we tend to use the functions $\xi_j$ more often that $\widetilde{\xi}_j$, but the latter are useful due to their linearity.  That is, $\widetilde{\xi}_j (D_1 + D_2) = \widetilde{\xi}_j (D_1) + \widetilde{\xi}_j (D_2)$.

It is straightforward to translate between the two systems of coordinates.  Specifically, we have $\widetilde{\xi}_j (D) = \xi_j (D) - (j-1)$.  Since $\widetilde{\xi}_j$ is linear, it follows that
\begin{align}
\label{eq:linear}
\xi_j (D_1 + D_2) = \xi_j (D_1) + \widetilde{\xi}_j (D_2) .
\end{align}

In \cite{PfluegerCycles}, Pflueger classifies the special divisor classes on $\Gamma$.  This classification specializes to the ``generic'' case where $k = \lfloor \frac{g+3}{2} \rfloor$, studied in \cite{CDPR}.

\begin{definition} \cite[Definition~3.5]{PfluegerCycles}
\label{Def:Torus}
Let $a$ and $b$ be positive integers and let $\lambda$ be the rectangular partition
\[
\lambda = \{ (x,y) \in \mathbb{N}^2 \mbox{ } \vert \mbox{ } x \leq a, \mbox{ } y \leq b \} .
\]
Given a $k$-uniform displacement tableau $t$ on $\lambda$ with alphabet $[g]$, we define $\TT(t)$ as follows.
\[
\TT (t) := \{ D \in \Pic^{g+a-b-1} (\Gamma) \mbox{ } \vert \mbox{ } \xi_{t(x,y)} (D) = y-x \} .
\]
\end{definition}

In the system of coordinates $\xi_j$, $\TT (t)$ is a coordinate subtorus, where the coordinate $\xi_j$ is fixed if and only if the symbol $j$ is in the image of $t$.  The codimension of $\TT(t)$ is therefore equal to the number of distinct symbols in $t$.  If the symbol $j$ appears in multiple boxes of the tableau $t$, then the $k$-uniform displacement condition guarantees that the two boxes impose the same condition on $\xi_j$.

\begin{theorem} \cite[Theorem~1.4]{PfluegerCycles}
\label{thm:Classification}
For any positive integers $r$ and $d$ satisfying $r > d-g$, we have
\[
W^r_d (\Gamma) = \bigcup \TT (t),
\]
where the union is over $k$-uniform displacement tableaux on $[r+1]\times[g-d+r]$ with alphabet $[g]$.
\end{theorem}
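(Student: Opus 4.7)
The plan is to prove the claimed equality by showing both inclusions, working throughout with the break-divisor coordinates $\xi_j$.

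First, for the inclusion $\bigcup \TT(t) \subseteq W^r_d(\Gamma)$: given a $k$-uniform displacement tableau $t$ on $[r+1]\times[g-d+r]$ and a class $D \in \TT(t)$, I would verify $\rk(D) \geq r$ by producing an effective representative of $D - E$ for every effective divisor $E$ of degree $r$. Sweeping the loops from left to right and performing a Dhar-burning / chip-firing reduction, the prescribed values $\xi_{t(x,y)}(D) = y - x$ give exactly the room needed on each loop so that whenever chips of $E$ are absorbed on the $j$-th loop, the resulting ``front'' can be pushed through to the next bridge without getting stuck. Crucially, the $k$-uniform displacement condition on $t$ ensures that when a loop with $\tau_j = k$ appears multiple times as a symbol, the congruence between the prescribed coordinates modulo the $k$-torsion is automatic, so the reduction can be carried out consistently.

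For the reverse inclusion $W^r_d(\Gamma) \subseteq \bigcup \TT(t)$: given $D \in W^r_d(\Gamma)$, I would construct a tableau $t$ whose entry $t(x,y)$ records the first loop index $j$ at which the chips of the reduced effective representative of $D - E_{x,y}$ accumulate, where $E_{x,y}$ is a standardized test divisor of degree $r$ built from $x-1$ chips at the initial vertex and $y-1$ further chips placed further to the right. The hypothesis $\rk(D) \geq r$ guarantees every $E_{x,y}$ admits such a reduction, so every box receives a symbol. Monotonicity of the Dhar-burning reduction under adding chips to $E$ shows that $t(x,y)$ is strictly increasing along rows and columns, so $t$ is a valid tableau; the equality $\xi_{t(x,y)}(D) = y - x$ then falls out of tracking the exact residue of chips on the critical loop.

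The main obstacle is verifying the $k$-uniform displacement condition on $t$. When $\tau_j = k$, the prescribed value of $\xi_j(D)$ is only meaningful modulo the torsion determined by $m_j$ and $\ell_j$, so if the symbol $j$ is assigned to both $(x,y)$ and $(x',y')$, the equalities $\xi_j(D) = y-x$ and $\xi_j(D) = y'-x'$ must be consistent modulo this torsion. Using the linearity relation \eqref{eq:linear} for the coordinates $\widetilde{\xi}_j$, I would compute $\widetilde{\xi}_j(E_{x',y'} - E_{x,y})$ and show it equals $(y'-x')-(y-x)$, which, combined with the $k$-torsion of the $j$-th loop, forces $y-x \equiv y'-x' \pmod{k}$. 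The interior loops where $\tau_j = 0$ cause no trouble, because there each coordinate is determined outright, so no symbol can be repeated at all, and the condition is vacuous.
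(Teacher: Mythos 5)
This statement is not proved in the paper at all: it is imported verbatim from Pflueger (\cite[Theorem~1.4]{PfluegerCycles}), generalizing the main technical result of \cite{CDPR}, so there is no internal proof to compare against. Your sketch does follow the broad strategy of the original argument --- a loop-by-loop reduction of $D-E$ encoded by what Pflueger calls a lingering lattice path --- but as written it has genuine gaps in both directions.

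In the first inclusion, the assertion that the prescribed values $\xi_{t(x,y)}(D)=y-x$ ``give exactly the room needed on each loop'' is precisely the content of the theorem; the real proof requires a precise criterion for when a reduced divisor can be pushed through the $j$th loop and a case analysis (including the lingering steps), none of which is supplied. More seriously, your construction of the tableau in the reverse inclusion is not well defined: a divisor $E_{x,y}$ of degree $r$ consisting of $x-1$ chips at the initial vertex plus $y-1$ chips further right forces $x+y-2=r$, so this recipe assigns symbols only along one antidiagonal of the $(r+1)\times(g-d+r)$ rectangle, not to every box (for the corner box $(r+1,\,g-d+r)$ it is already inconsistent whenever the rectangle has more than one row). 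In the actual proof the tableau is read off from the lattice path attached to $D$ alone, recording for each coordinate direction the steps at which the path moves in that direction, rather than from a separate test divisor per box. Since your verification of the $k$-uniform displacement condition --- the step you yourself flag as the main obstacle --- is routed through the ill-defined $E_{x,y}$, it is not actually carried out. Finally, a small but telling slip: the loops with $\tau_j=0$ are the \emph{outer} loops ($j<k$ or $j>g-k+1$), while the interior loops have $\tau_j=k$; the correct observation buried here is that a symbol $j$ with $\tau_j=0$ cannot repeat because two boxes on the same diagonal cannot share an entry in a tableau.
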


A consequence of Theorem~\ref{thm:Classification} is that $\Gamma$ has a unique divisor class of degree $k$ and rank 1, which we denote by $g^1_k$.  This justifies the terminology that $\Gamma$ is a $k$-gonal chain of loops.  Specifically, the unique $k$-uniform displacement tableau on $[2] \times [g-k+1]$ with alphabet $[g]$ contains the symbols $1, 2, \ldots , g-k+1$ in the first column and the symbols $k, k+1, \ldots , g$ in the second column.  In particular, we have
\begin{displaymath}
\widetilde{\xi}_j (g^1_k) = \left\{ \begin{array}{ll}
0 & \textrm{if $j \leq g-k+1$} \\
k & \textrm{if $j > g-k+1$.}
\end{array} \right.
\end{displaymath}

Given a splitting type $\bmu \in \mathbb{Z}^k$, we define the tropical splitting type locus
\[
\overline{W}^{\bmu} (\Gamma) = \left\{ D \in \Pic^{d(\bmu)} (\Gamma) \mbox{ } \vert \mbox { } \rk (D + mg^1_k) \geq x_m (\bmu) -1 \mbox{ for all } m \right\} .
\]
Note that the tropical splitting type locus can be defined in this way for \emph{any} tropical curve $\Gamma$ with a distinguished $g^1_k$.  By Lemma~\ref{lem:SplitTypeContainment}, if $\bmu \leq \blam$, then $\overline{W}^{\bmu} (\Gamma) \subseteq \overline{W}^{\blam} (\Gamma)$.  The following is a straightforward consequence of Baker's Specialization Lemma.

\begin{proposition}
\label{prop:specialization}
Let $C$ be a curve of genus $g$ and gonality $k$ over a nonarchimedean field $K$ with skeleton $\Gamma$.  Then
\[
\Trop \left( \overline{W}^{\bmu} (C) \right) \subseteq \overline{W}^{\bmu} (\Gamma) .
\]
\end{proposition}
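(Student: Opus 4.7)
The plan is to reduce the statement to Baker's Specialization Lemma applied not just to $L$ itself, but to each twist $L \otimes \pi^* \cO_{\PP^1}(m)$. Recall that Baker's Specialization Lemma says that for a line bundle $\mathcal{L}$ on $C$, the tropicalization $\Trop(\mathcal{L}) \in \Pic(\Gamma)$ satisfies $\rk(\Trop(\mathcal{L})) \geq h^0(C, \mathcal{L}) - 1$. The definition of $\overline{W}^{\bmu}(\Gamma)$ is built from inequalities on ranks of $\Trop(L) + m \cdot g^1_k$, while the definition of $\overline{W}^{\bmu}(C)$ is built from inequalities on $h^0$ of $L \otimes \pi^* \cO(m)$; the two families of inequalities will match, term by term, once we know that tropicalization converts this tensor product into the corresponding tropical sum.

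The key steps, in order, are as follows. First, I would check that the tropicalization map $\Trop \colon \Pic(C) \to \Pic(\Gamma)$ is a homomorphism of abelian groups, so that $\Trop(L \otimes \pi^* \cO(m)) = \Trop(L) + m \cdot \Trop(\pi^* \cO(1))$ for every integer $m$. Second, I would identify $\Trop(\pi^* \cO_{\PP^1}(1))$ with $g^1_k$. The tropicalization of $\pi^* \cO_{\PP^1}(1)$ is a divisor class of degree $k$ on $\Gamma$, and pulling back the two sections of $\cO_{\PP^1}(1)$ and applying Baker's Specialization Lemma shows that this class has rank at least $1$. By Theorem~\ref{thm:Classification} applied with $r=1$, $d=k$, the locus $W^1_k(\Gamma)$ consists of a single point, namely the class $g^1_k$ arising from the unique $k$-uniform displacement tableau on $[2] \times [g-k+1]$; hence $\Trop(\pi^* \cO(1)) = g^1_k$. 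Third, given $L \in \overline{W}^{\bmu}(C)$ and any integer $m$, I apply Baker's Specialization Lemma to $L \otimes \pi^* \cO(m)$ to obtain
\[
\rk\bigl(\Trop(L) + m \cdot g^1_k\bigr) \;=\; \rk\bigl(\Trop(L \otimes \pi^* \cO(m))\bigr) \;\geq\; h^0(C, L \otimes \pi^* \cO(m)) - 1 \;\geq\; x_m(\bmu) - 1,
\]
where the last inequality is the defining condition of $\overline{W}^{\bmu}(C)$. Since this holds for every $m$, we conclude $\Trop(L) \in \overline{W}^{\bmu}(\Gamma)$, which is exactly the claim.

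The only genuine content is the identification $\Trop(\pi^* \cO(1)) = g^1_k$; the rest is a bookkeeping exercise combining a group homomorphism property with rank semicontinuity. I do not anticipate a serious obstacle, but one should be mindful that the argument uses the hypothesis that the gonality of $C$ is exactly $k$ (so that $\pi$ is indeed the unique gonality pencil and the skeleton $\Gamma$ is a $k$-gonal chain of loops), which is what ensures uniqueness of the $g^1_k$ on $\Gamma$ via Theorem~\ref{thm:Classification}.
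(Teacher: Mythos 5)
Your proposal is correct and follows essentially the same route as the paper: identify $\Trop(\pi^*\cO(1))$ with the unique $g^1_k$ on $\Gamma$ via Baker's Specialization Lemma and the uniqueness statement following Theorem~\ref{thm:Classification}, then apply the Specialization Lemma to each twist $L \otimes \pi^*\cO(m)$ and use linearity of tropicalization. The only difference is that you spell out the group-homomorphism step that the paper leaves implicit.
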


\begin{proof}
Since the divisor of degree $k$ and rank 1 on $\Gamma$ is unique, it must be the tropicalization of the $g^1_k$ on $C$ by Baker's Specialization Lemma.  If $D \in \overline{W}^{\bmu}(C)$, then by definition we have
\[
h^0 (C,D + mg^1_k) \geq x_m (\bmu) \mbox{} \text{ for all } m .
\]
By Baker's Specialization Lemma, we have
\[
\rk (\Trop (D + m g^1_k)) \geq h^0 (C,D + mg^1_k) -1 \geq x_m (\bmu) -1 \mbox{} \text{ for all } m .
\]
Thus, $\Trop (D) \in \overline{W}^{\bmu} (\Gamma)$.
\end{proof}

\section{Tropical Splitting Type Loci}
\label{Sec:TSL}

In this section, we prove Theorem~\ref{thm:TropClassify}, which gives an explicit description of splitting type loci on a $k$-gonal chain of loops.  Before proving Theorem~\ref{thm:TropClassify}, we first define a partition $\lambda (\bmu)$ associated to each splitting type $\bmu$.

\subsection{Staircases}

\begin{definition}\label{def:SplitTypePartition}
Given a splitting type $\bmu \in \ZZ^k$ and an integer $m$, we define the rectangular partition
\[
\lambda_m (\bmu) := \left\{ (x,y) \in \mathbb{N}^2 \mbox{ } \vert \mbox{ } x \leq x_m (\bmu), \mbox{ } y \leq y_m (\bmu) \right\}.
\]
We further define
\begin{align*}
\lambda (\bmu) &= \bigcup_{m \in \mathbb{Z}} \lambda_m (\bmu) \\
&= \left\{ (x,y) \in \mathbb{N}^2 \mbox{ } \vert \mbox{ } \exists m \in \ZZ \mbox{ s.t. } x \leq x_m (\bmu), \mbox{ } y \leq y_m (\bmu) \right\}.
\end{align*}
We call a partition of the form $\lambda (\bmu)$ a \emph{$k$-staircase}.
\end{definition}

\begin{example}
Let $\bmu = (-3,-1,1)$.  Figure~\ref{Fig:Parts} depicts the rectangular partitions $\lambda_{-1} (\bmu)$, $\lambda_0 (\bmu)$, and $\lambda_1 (\bmu)$, together with $\lambda (\bmu)$.  Note that $\lambda_m (\bmu)$ is empty for all $m$ other than $-1$, 0, or 1.

\begin{figure}[H]
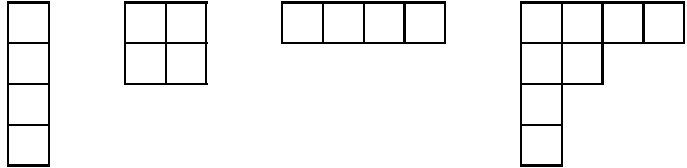


\begin{ytableau}
{} \\
{} \\
{} \\
{}
\end{ytableau}
\hspace{.3in}
\begin{ytableau}
{} & {} \\
{} & {}\\
\end{ytableau}
\hspace{.3in}
\begin{ytableau}
{} & {} & {} & {}\\
\end{ytableau}
\hspace{.3in}
\begin{ytableau}
{} & {} & {} & {}\\
{} & {}\\
{}\\
{}\\
\end{ytableau}

\caption{The partitions $\lambda_{-1} (\bmu)$, $\lambda_0 (\bmu)$, $\lambda_1 (\bmu)$, and $\lambda (\bmu)$, where $\bmu = (-3,-1,1)$.}
\label{Fig:Parts}
\end{figure}
\end{example}

\begin{remark}
If $\bmu = (\mu_1 , \ldots , \mu_k)$ and $\bmu' = (\mu_1 + m , \ldots , \mu_k + m )$ for some $m \in \ZZ$, then there is an isomorphism between $\overline{W}^{\bmu} (C)$ and $\overline{W}^{\bmu'}(C)$, given by twisting by $\pi^* \cO (m)$.  Correspondingly, we have $\lambda (\bmu) = \lambda (\bmu')$.   
\end{remark}

\begin{remark}
If $\bmu \leq \bmu'$, then by Lemma~\ref{lem:SplitTypeContainment}, we have $\lambda (\bmu') \subseteq \lambda (\bmu)$.
\end{remark}

If both $x_m (\bmu)$ and $y_m (\bmu)$ are positive, then the box $(x_m (\bmu),y_m (\bmu))$ is the unique inside corner of the rectangular partition $\lambda_m (\bmu)$, and one of the inside corners of $\lambda(\bmu)$.  We define
\[
\alpha_m (\bmu) = x_m (\bmu) - x_{m-1} (\bmu).
\]
Note that $\alpha_m (\bmu) \leq \alpha_{m+1} (\bmu)$ for all $m$, and $y_{m-1} (\bmu) - y_m (\bmu) = k - \alpha_m (\bmu)$.  We say that an integer $\alpha$ is a \emph{rank jump} in $\lambda(\bmu)$ if $\alpha = \alpha_m (\bmu)$ for some integer $m$.  We say that $\alpha$ is a \emph{strict rank jump} in $\lambda (\bmu)$ if $\alpha = \alpha_m (\bmu)$ for some integer $m$ such that both $x_{m-1} (\bmu)$ and $y_m (\bmu)$ are positive.  In other words, the strict rank jumps are $\alpha_{1-\mu_k} (\bmu), \alpha_{2-\mu_k} (\bmu), \ldots , \alpha_{-2-\mu_1} (\bmu)$.

\subsection{Tropical Splitting Type Loci}

We now define the analogue of the coordinate tori from \cite{PfluegerCycles}.

\begin{definition}
\label{Def:SplittingTorus}
Let $\bmu \in \ZZ^k$ be a splitting type.  Given an integer $m$ and a $k$-uniform displacement tableau $t$ on $\lambda (\bmu)$ with alphabet $[g]$, let $t_m$ denote the restriction of $t$ to the rectangular subpartition $\lambda_m (\bmu)$.  We define the coordinate subtorus $\TT(t)$ as follows.
\[
\TT (t) = \left\{ D \in \Pic^{d(\bmu)} (\Gamma) \mbox{ } \vert \mbox{ } D + mg^1_k \in \TT (t_m) \mbox{ for all } m \right\} .
\]
\end{definition}

From the definition it appears that, if one wants to determine whether a divisor class $D$ is contained in $\TT(t)$, one has to compute $\xi_j (D + mg^1_k)$ for all integers $m$.  Using (\ref{eq:linear}), however, we can simplify Definition~\ref{Def:SplittingTorus} as follows. 

\begin{lemma}
\label{Lem:AltChar}
Let $\bmu \in \ZZ^k$ be a splitting type, and let $t$ be a $k$-uniform displacement tableau on $\lambda (\bmu)$ with alphabet $[g]$.  Define the function
\begin{displaymath}
Z(x,y) = \left\{ \begin{array}{ll}
y-x & \textrm{if $t(x,y) \leq g-k+1$} \\
y-x+mk & \textrm{if $t(x,y) > g-k+1$ and $x_{m-1} (\bmu) < x \leq x_m (\bmu)$.}
\end{array} \right.
\end{displaymath}
Then
\[
\TT (t) := \{ D \in \Pic^{d(\bmu)} (\Gamma) \mbox{ } \vert \mbox{ } \xi_{t(x,y)} (D) = Z(x,y) \} .
\]
\end{lemma}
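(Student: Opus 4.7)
The plan is to unfold Definition~\ref{Def:SplittingTorus} and apply the linearity identity~(\ref{eq:linear}), then substitute the explicit values of $\widetilde{\xi}_j(g^1_k)$ that were computed just above the lemma from the unique tableau for the $g^1_k$.

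First, I would unpack the definition: $D \in \TT(t)$ if and only if for every integer $m$ with $\lambda_m(\bmu) \neq \emptyset$ and every $(x,y) \in \lambda_m(\bmu)$,
\[
\xi_{t(x,y)}(D + mg^1_k) = y - x,
\]
where we have used that $t_m$ is the restriction of $t$ to $\lambda_m(\bmu)$, so $t_m(x,y) = t(x,y)$. Iterating (\ref{eq:linear}) rewrites the left-hand side as $\xi_{t(x,y)}(D) + m\,\widetilde{\xi}_{t(x,y)}(g^1_k)$, turning the condition into
\[
\xi_{t(x,y)}(D) = (y - x) - m\,\widetilde{\xi}_{t(x,y)}(g^1_k).
\]

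Next, I would substitute the two possible values of $\widetilde{\xi}_j(g^1_k)$. When $t(x,y) \leq g-k+1$ this quantity is $0$, and the condition reduces to $\xi_{t(x,y)}(D) = y - x$, independent of $m$; this matches $Z(x,y)$ in the first clause. When $t(x,y) > g-k+1$ the quantity is $\pm k$, and the condition becomes $\xi_{t(x,y)}(D) = (y-x) + mk$ (after absorbing the sign into the convention). Choosing $m$ to be the minimal integer with $(x,y) \in \lambda_m(\bmu)$, i.e., the unique $m$ satisfying $x_{m-1}(\bmu) < x \leq x_m(\bmu)$, the right-hand side is exactly $Z(x,y)$ in the second clause of the definition.

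Finally, I would verify that imposing the condition only for this single $m$ is equivalent to imposing it for every $m$ with $(x,y) \in \lambda_m(\bmu)$. For $t(x,y) \leq g-k+1$ this is automatic since the constraint is $m$-independent. For $t(x,y) > g-k+1$ one must check that any conditions arising from other strata containing $(x,y)$ are either redundant or automatically satisfied, and this is the main obstacle of the argument. Since $\tau_j = 0$ for $j > g-k+1$, constraints from distinct values of $m$ differ by nonzero multiples of $k$ in $\mathbb{R}/((m_j+\ell_j)/m_j)\mathbb{Z}$ and are not a priori compatible. Resolving this requires either a combinatorial argument that a symbol $j > g-k+1$ in a $k$-uniform displacement tableau on $\lambda(\bmu)$ must sit at a box that lies in exactly one rectangular stratum $\lambda_m(\bmu)$, or a direct check (using the $k$-uniform displacement condition together with the linearity identity applied to other cells sharing the symbol) that the extra equations impose no new conditions beyond the one codified in $Z(x,y)$. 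Once this compatibility is in hand, the two descriptions of $\TT(t)$ coincide.
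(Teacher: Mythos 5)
Your reduction via Definition~\ref{Def:SplittingTorus} and the linearity identity~(\ref{eq:linear}) is exactly right, and you have correctly diagnosed the crux: when $t(x,y) = j > g-k+1$, the coordinate $\xi_j$ lives in $\mathbb{R}/\frac{m_j+\ell_j}{m_j}\mathbb{Z}$ with $\tau_j = 0$, so the constraints $\xi_j(D) = y-x+mk$ for distinct $m$ are genuinely incompatible, and the lemma is false unless such a box lies in only one rectangle $\lambda_m(\bmu)$. But you stop at naming this as the obstacle (``Resolving this requires either a combinatorial argument that \ldots or a direct check \ldots'') without supplying either argument. That unproved claim is the entire content of the lemma beyond bookkeeping, so the proposal has a genuine gap.

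The paper closes it with a short hook argument. Suppose $(x,y) \in \lambda_m(\bmu)$ has $t(x,y) = j > g-k+1$ but $x \leq x_{m-1}(\bmu)$. Then the hook
\[
H_m = \{ (x',y') \in \lambda (\bmu) \mbox{ } \vert \mbox{ } x' \geq x_{m-1} (\bmu),\ y' \geq y_m (\bmu) \}
\]
consists of $k+1$ boxes, all weakly below and to the right of $(x,y)$, so every symbol appearing in $H_m$ is $\geq j$. The only two boxes of $H_m$ at lattice distance a multiple of $k$ from each other are the two inside corners $(x_{m-1}(\bmu), y_{m-1}(\bmu))$ and $(x_m(\bmu), y_m(\bmu))$, so by $k$-uniform displacement $H_m$ carries at least $k$ distinct symbols, all $\geq j$. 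But $j > g-k+1$ leaves only $g - j + 1 \leq k-1$ available symbols in $[g]$, a contradiction. Hence $x_{m-1}(\bmu) < x \leq x_m(\bmu)$ for \emph{every} $m$ with $(x,y) \in \lambda_m(\bmu)$, which forces that $m$ to be unique and equal to the one appearing in your formula for $Z(x,y)$. With that in hand, your substitution of $\widetilde{\xi}_j(g^1_k) \in \{0, k\}$ completes the proof exactly as in the paper. (One small sign point: with the paper's conventions $\widetilde{\xi}_j(g^1_k) = +k$ for $j > g-k+1$ and $\xi_j(D + mg^1_k) = \xi_j(D) + mk$, so $\xi_j(D+mg^1_k) = y-x$ becomes $\xi_j(D) = y-x+mk$ only after the paper's orientation of the identity; your ``absorbing the sign into the convention'' should be made explicit.)
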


\begin{proof}
Let $m$ be an integer and let $t_m (x,y) = j$.  If $j \leq g-k+1$, then $\widetilde{\xi_j} (g^1_k) = 0$, and by (\ref{eq:linear}) we see that for any divisor class $D$ we have
\[
\xi_j (D) = \xi_j (D + mg^1_k) .
\]
It follows that $\xi_j (D) = y-x$ if and only if $\xi_j (D + mg^1_k ) = y-x$.

On the other hand, if $j > g-k+1$, then we must first show that $x_{m-1} (\bmu) < x \leq x_m (\bmu)$.  The second inequality follows from the fact that $(x,y) \in \lambda_m (\bmu)$.  If $x \leq x_{m-1} (\bmu)$, then the $k+1$ boxes in the hook
\[
H_m = \{ (x,y) \in \lambda (\bmu) \mbox{ } \vert \mbox{ } x \geq x_{m-1} (\bmu), y \geq y_m (\bmu) \}
\]
are all below and to the right of $(x,y)$.  The two inside corners $(x_{m-1} (\bmu), y_{m-1} (\bmu))$ and $(x_m (\bmu), y_m (\bmu))$ have lattice distance $k$, so they are the only two boxes of $H_m$ that can contain the same symbol.  It follows that $H_m$ contains at least $k$ distinct symbols greater than or equal to $j$.  Since $j > g-k+1$, this is impossible, hence $x > x_{m-1} (\bmu)$.  Now, since $\widetilde{\xi_j} (g^1_k) = k$, by (\ref{eq:linear}) we see that for any divisor class $D$ we have
\[
\xi_j (D) = \xi_j (D + mg^1_k) - mk.
\]
It follows that $\xi_j (D) = y-x+mk$ if and only if $\xi_j (D + mg^1_k ) = y-x$.
\end{proof}

As in Definition~\ref{Def:Torus}, the $k$-uniform displacement condition guarantees that, if the symbol $j$ appears in more than one box, then the boxes impose the same condition on $\xi_j$.  In particular, if $j > g-k+1$ and $t_m (x,y) = t_{m'} (x',y') = j$, then the $k$-uniform displacement condition guarantees that
\[
(y'-x') - (y-x) = (m-m')k ,
\]
so $Z(x,y) = Z(x',y')$.  As a consequence, we see that the codimension of $\TT(t)$ is equal to the number of distinct symbols in $t$.

\begin{example}
Figure~\ref{Fig:Divisor} depicts a 3-uniform displacement tableau $t$ on $\lambda (\bmu)$, where $\bmu = (-3,-1,1)$.  Since the tableau contains $g=5$ distinct symbols, $\TT (t)$ is a zero-dimensional torus.  In other words, it consists of a single divisor class $D$, also depicted in Figure~\ref{Fig:Divisor}.  In this picture, the chips on loops 2 and 4 are located at the points $\langle 1 \rangle_2$ and $\langle 1 \rangle_4$.  By Theorem~\ref{thm:TropClassify}, the divisor class $D$ is in $\overline{W}^{\bmu} (\Gamma)$.  That is, $D - g^1_3$ has rank 0, $D$ has rank 1, and $D + g^1_3$ has rank 3.

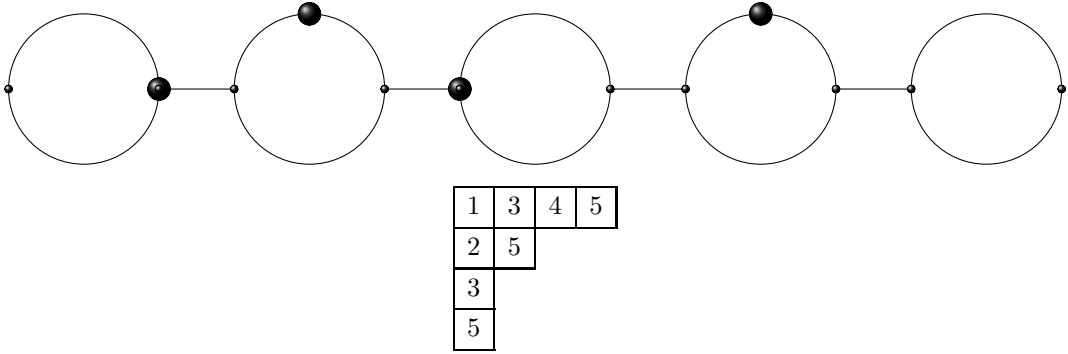
\begin{figure}[h!]
\begin{tikzpicture}

\draw (0,0) circle (1);
\draw (1,0)--(2,0);
\draw (3,0) circle (1);
\draw (4,0)--(5,0);
\draw (6,0) circle (1);
\draw (7,0)--(8,0);
\draw (9,0) circle (1);
\draw (10,0)--(11,0);
\draw (12,0) circle (1);

\draw [ball color=black] (1,0) circle (1.5mm);
\draw [ball color=black] (3,1) circle (1.5mm);
\draw [ball color=black] (5,0) circle (1.5mm);
\draw [ball color=black] (9,1) circle (1.5mm);

\draw [ball color=black] (-1,0) circle (0.5mm);
\draw [ball color=black] (1,0) circle (0.5mm);

\draw [ball color=black] (2,0) circle (0.5mm);
\draw [ball color=black] (4,0) circle (0.5mm);

\draw [ball color=black] (5,0) circle (0.5mm);
\draw [ball color=black] (7,0) circle (0.5mm);

\draw [ball color=black] (8,0) circle (0.5mm);
\draw [ball color=black] (10,0) circle (0.5mm);

\draw [ball color=black] (11,0) circle (0.5mm);
\draw [ball color=black] (13,0) circle (0.5mm);
\end{tikzpicture}

\vspace{.1in}
\begin{ytableau}
1 & 3 & 4 & 5\\
2 & 5\\
3\\
5\\
\end{ytableau}
\caption{A 3-uniform displacement tableau on $\lambda (-3,-1,1)$ and the corresponding divisor class.}
\label{Fig:Divisor}
\end{figure}
\end{example}

Lemma~\ref{Lem:AltChar} allows us to formulate the following analogue of \cite[Lemma~3.6]{CPJ}.
\newline
\begin{lemma}
\label{lem:contain}
Let $\bmu \in \ZZ^k$ be a splitting type, and let $t$, $t'$ be $k$-uniform displacement tableaux on $\lambda (\bmu)$.  Then $\TT (t) \subseteq \TT (t')$ if and only if
\begin{enumerate}
\item  every symbol in $t'$ is a symbol in $t$, and
\item  if $t(x,y) = t'(x',y')$, then $y-x \equiv y'-x' \pmod{k}$.
\end{enumerate}
\end{lemma}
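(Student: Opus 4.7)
The approach is to work directly with the explicit defining equations of $\TT(t)$ and $\TT(t')$ provided by Lemma~\ref{Lem:AltChar}: each such torus is the coordinate subtorus of $\Pic^{d(\bmu)}(\Gamma)$ obtained by fixing $\xi_j$ to a well-defined value $Z(x,y)$ for every symbol $j$ in the image of the respective tableau (for any box with that symbol), and leaving the remaining coordinates free.  The two conditions in the lemma then translate into the two natural requirements for one coordinate subtorus to be contained in another: the set of pinned coordinates must grow, and the pinned values must agree.

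For the forward direction, suppose $\TT(t) \subseteq \TT(t')$.  If some symbol $j$ appeared in $t'$ but not in $t$, then $\xi_j$ would be free on $\TT(t)$ but fixed on $\TT(t')$, and one could choose a point of $\TT(t)$ violating that constraint; this gives (1).  Granted (1), for any $j = t(x,y) = t'(x',y')$, every $D \in \TT(t) \subseteq \TT(t')$ satisfies both $\xi_j(D) = Z_t(x,y)$ and $\xi_j(D) = Z_{t'}(x',y')$, forcing these two values to coincide in $\RR/((m_j+\ell_j)/m_j)\ZZ$.  Unpacking the two branches in the definition of $Z$ and using that the $k$-gonal torsion orders $\tau_j$ equal $0$ or $k$, this reduces to the integer congruence $y - x \equiv y' - x' \pmod{k}$, yielding (2).

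For the reverse direction, assume (1) and (2), and fix $D \in \TT(t)$.  For every $(x',y') \in \lambda(\bmu)$ with $j := t'(x',y')$, condition (1) supplies some $(x,y)$ with $t(x,y) = j$, hence $\xi_j(D) = Z_t(x,y)$; it remains to check $Z_t(x,y) = Z_{t'}(x',y')$ as values of $\xi_j$.  When $k \leq j \leq g-k+1$, the modulus of the $j$th coordinate divides $k$, so the congruence $y-x \equiv y'-x' \pmod{k}$ from (2) already suffices.  When $j < k$ or $j > g-k+1$, the coordinate $\xi_j$ is torsion-free, and one must promote (2) to an exact equality of integers; the shift $+mk$ in the definition of $Z$, together with the hook-and-corner argument already used in the proof of Lemma~\ref{Lem:AltChar} to locate $m$ from $x$, forces the integer $n$ with $(y-x) - (y'-x') = nk$ to equal $m'-m$, giving the desired equality.

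The main obstacle is precisely this last step: in the torsion-free ranges $j < k$ and $j > g-k+1$, the $\pmod{k}$ congruence of (2) is a priori weaker than the integer equality required to identify $Z_t(x,y)$ with $Z_{t'}(x',y')$, so one must extract extra combinatorial information from the $k$-staircase $\lambda(\bmu)$ to pin down $m$ and $m'$ from $x$ and $x'$ alone.  The argument runs parallel to the one establishing Lemma~\ref{Lem:AltChar} itself, but now applied simultaneously to the two tableaux $t$ and $t'$, and this is the only place where the geometry of $\lambda(\bmu)$ (as opposed to formal manipulation of the coordinates $\xi_j$) genuinely enters.
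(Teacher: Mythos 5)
Your overall strategy is the natural one, and it is evidently what the authors intend (the paper states this lemma without proof, as a formal consequence of Lemma~\ref{Lem:AltChar}): treat $\TT(t)$ and $\TT(t')$ as coordinate subtori, so containment means the pinned coordinates of $t'$ are a subset of those of $t$ and the pinned values agree. Your forward direction is fine: non-emptiness of $\TT(t)$ gives (1), and agreement of $Z$-values in $\RR/\frac{m_j+\ell_j}{m_j}\ZZ$ gives the congruence (2) whether $\tau_j$ is $k$ or $0$.

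The gap is in the reverse direction, precisely at the point you label ``the main obstacle'' and then do not resolve. For symbols $j<k$ or $j>g-k+1$ the loop is torsion-free and you must upgrade $y-x\equiv y'-x'\pmod{k}$ to the exact integer equality $Z_t(x,y)=Z_{t'}(x',y')$; you assert that ``the hook-and-corner argument already used in the proof of Lemma~\ref{Lem:AltChar}'' forces this, but that argument only yields the positional constraints $x_{m-1}(\bmu)<x\le x_m(\bmu)$ and (dually) $y_{m+1}(\bmu)<y\le y_m(\bmu)$. These confine the two $Z$-values to an interval of length less than $2k$, so their difference lies in $\{-k,0,k\}$, but nothing you say excludes $\pm k$. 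Excluding it needs a further argument that uses hypotheses (1) and (2) jointly: for example, compare the southeast hook of $(x,y)$ in $t$ with that of $(x',y')$ in $t'$. Each consists of pairwise distinct symbols $\ge j$ occupying consecutive diagonals, there are at most $g-j+1\le k-1$ symbols $\ge j$ in total, and by (1) and (2) every common symbol of the two hooks must sit on a common diagonal class; if the $Z$-values differed by $\pm k$, the two diagonal arcs would together cover all of $\ZZ/k\ZZ$, forcing strictly more common symbols than available common diagonal classes, a contradiction. A dual count with northwest hooks handles $j<k$ --- a case your sketch does not address at all, since there is no shift $+mk$ and no ``$m$ to locate'' there, and what is needed is $y-x=y'-x'$ on the nose. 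If all torsion orders were $k$ your argument would close immediately, but the paper deliberately sets $\tau_j=0$ outside the middle range (see the remark after Definition~\ref{Def:TorsionOrder}), and that is exactly where the content of this lemma lies.
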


We now prove Theorem~\ref{thm:TropClassify}.

\begin{proof}[Proof of Theorem~\ref{thm:TropClassify}]
We first show that 
\[
\overline{W}^{\bmu}(\Gamma) \supseteq \bigcup \TT(t) .
\]
Let $t$ be a $k$-uniform displacement tableau on $\lambda (\bmu)$, and let $D \in \TT(t)$.  By definition, $D + mg^1_k \in \TT(t_m)$ for all $m$.  It follows from Theorem~\ref{thm:Classification} that $D + mg^1_k$ has degree $d(\bmu) + mk$ and rank at least $x_m (\bmu) -1$ for all $m$.  By definition, we see that $D \in \overline{W}^{\bmu}(\Gamma)$.

We now show that
\[
\overline{W}^{\bmu}(\Gamma) \subseteq \bigcup \TT(t) .
\]
Let $D \in \overline{W}^{\bmu}(\Gamma)$.  By definition, $D + mg^1_k$ has degree $d(\bmu) + mk$ and rank at least $x_m (\bmu) - 1$ for all $m$.  By Theorem~\ref{thm:Classification}, there exists a $k$-uniform displacement tableau $t_m$ on the rectangular partition $\lambda_m (\bmu)$ such that $D + mg^1_k \in \TT(t_m)$. We construct a tableau $t$ on $\lambda(\bmu)$ as follows.  For each box $(x,y)$ in $\lambda(\bmu)$, define 
\[
t(x,y) = \min_{\substack{m \in \ZZ \text{ s.t.} \\ (x,y) \in \lambda_m (\bmu)}} t_m (x,y) .
\]
We first show that $t$ is a tableau on $\lambda (\bmu)$.  To see that $t$ is strictly increasing across rows, suppose that $x > 1$ and $t(x,y) = t_m (x,y)$.  Since $(x,y) \in \lambda_m (\bmu)$, we see that $(x-1,y) \in \lambda_m (\bmu)$ as well.  It follows that
\[
t(x-1,y) \leq t_m (x-1,y) < t_m (x,y) = t(x,y) .
\]
The same argument shows that $t$ is strictly increasing down the columns.

We now show that the tableau $t$ satisfies the $k$-uniform displacement condition.  Suppose that $t(x,y) = t(x',y')$.  By construction, there exist integers $m$ and $m'$ such that $t(x,y) = t_m (x,y)$ and $t(x',y') = t_{m'} (x',y')$.  Since $D + mg^1_k \in \TT(t_m)$ and $D + m'g^1_k \in \TT(t_{m'})$, we see that 
\begin{align*}
\xi_{t(x,y)} ( D + mg^1_k ) &\equiv y-x \pmod{k} \\
\xi_{t(x,y)} ( D + m'g^1_k ) &\equiv y'-x' \pmod{k}.
\end{align*}
It therefore suffices to show that 
\[
\xi_j (D + mg^1_k) \equiv \xi_j (D + m'g^1_k) \mbox{ for all } j.
\]
This follows from (\ref{eq:linear}) and the fact that $\widetilde{\xi}_j (g^1_k) \equiv 0 \pmod{k}$ for all $j$.

Finally, we show that $D \in \TT (t)$.  For every box $(x,y) \in \lambda (\bmu)$, there is an integer $m$ such that $\xi_{t(x,y)} (D + mg^1_k ) = y-x$.  By Lemma~\ref{Lem:AltChar}, we have $\xi_{t(x,y)} (D) = Z(x,y)$.  Since this holds for all $(x,y) \in \lambda (\bmu)$, we see that $D \in \TT (t)$ by Lemma~\ref{Lem:AltChar}.
\end{proof}

\subsection{Operations on Splitting Types}

Several operations on splitting types have simple interpretations in terms of the corresponding partitions.  The first of these corresponds to Serre duality.

\begin{lemma}
\label{lem:SerreDualPartition}
Let $\bmu = (\mu_1, \ldots , \mu_k )$ be a splitting type, and let $\bmu^T = (-\mu_k, \ldots , -\mu_1)$.  Then $\lambda(\bmu^T) = \lambda(\bmu)^T$.
\end{lemma}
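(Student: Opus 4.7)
The plan is to reduce the equality of partitions $\lambda(\bmu^T)$ and $\lambda(\bmu)^T$ to an equality of rectangular building blocks. Since both partitions are defined as unions $\lambda(\bmu) = \bigcup_{m \in \ZZ} \lambda_m(\bmu)$ of rectangles, and since transposition commutes with unions of subsets of $\mathbb{N}^2$, it suffices to show that for every $m$ there is some $m' \in \ZZ$ (depending only on $m$) with $\lambda_m(\bmu^T) = \lambda_{m'}(\bmu)^T$, in such a way that the reindexing $m \mapsto m'$ is a bijection on $\ZZ$.

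The key computation is to relate the rank/corank counts for $\bmu^T$ to those for $\bmu$. Writing out the definition, the entries of $\bmu^T$ are $(\bmu^T)_i = -\mu_{k+1-i}$, so after reindexing
\[
x_m(\bmu^T) = \sum_{i=1}^k \max\{0,\, -\mu_i + m + 1\} = y_{-m-2}(\bmu),
\]
and analogously $y_m(\bmu^T) = x_{-m-2}(\bmu)$. Thus the rectangle $\lambda_m(\bmu^T)$ of width $x_m(\bmu^T)$ and height $y_m(\bmu^T)$ is exactly the transpose of the rectangle $\lambda_{-m-2}(\bmu)$.

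Once these two identities are in hand, setting $m' = -m-2$ gives a bijection $\ZZ \to \ZZ$, and then
\[
\lambda(\bmu^T) = \bigcup_{m \in \ZZ} \lambda_m(\bmu^T) = \bigcup_{m \in \ZZ} \lambda_{-m-2}(\bmu)^T = \Bigl(\bigcup_{m \in \ZZ} \lambda_m(\bmu)\Bigr)^T = \lambda(\bmu)^T.
\]
There is no real obstacle here: the only thing to be careful about is the reindexing of the $\max\{0,\cdot\}$ sums after reversing the order of the entries of $\bmu$, and tracking the shift by $-2$ that arises from replacing $\mu_i + m + 1$ by $-\mu_i + m + 1$. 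Everything else is purely formal manipulation of the definitions.
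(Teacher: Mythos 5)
Your proof is correct and rests on the same key computation as the paper's: the substitution $m \mapsto -2-m$ identifying $x_m(\bmu^T)=y_{-m-2}(\bmu)$ and $y_m(\bmu^T)=x_{-m-2}(\bmu)$. The only (cosmetic) difference is that you obtain the equality directly by matching rectangles and commuting transpose with the union, whereas the paper proves one containment by element-chasing and gets the other from the fact that both operations are involutions.
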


\begin{proof}
Since both operations are involutions, it suffices to show that $\lambda(\bmu)^T \subseteq \lambda(\bmu^T)$.  Let $(x,y) \in \lambda(\bmu)$.  Then there exists an integer $m \in \ZZ$ such that 
\[
x \leq \sum_{i=1}^k \max \{ 0, \mu_i + m + 1 \},
\]
\[
y \leq \sum_{i=1}^k \max \{ 0, -\mu_i - m - 1 \}.
\]
Setting $m' = -2-m$, we see that
\begin{align*}
y \leq \sum_{i=1}^k \max \{ 0, -\mu_i - m - 1 \} &= \sum_{i=1}^k \max \{ 0, - \mu_i + m' + 1 \} \\
x \leq \sum_{i=1}^k \max \{ 0, \mu_i + m + 1 \} &= \sum_{i=1}^k \max \{ 0, \mu_i - m' - 1 \} .
\end{align*}
Thus, $(y,x) \in \lambda(\bmu^T)$. 
\end{proof}

As a consequence, we see that the set of partitions of the form $\lambda (\bmu)$ is closed under transpose.  We now show that it is also closed under the operations of deleting the top row or the leftmost column.

\begin{lemma}
\label{Lem:DeleteRow}
Let $\bmu \in \ZZ^k$ be a splitting type, let $s$ be the minimal index such that $\mu_s < \mu_{s+1}$, and let
\[
\bmu^{+} = (\mu_1 , \ldots , \mu_{s-1} , \mu_s + 1, \mu_{s+1}, \ldots , \mu_k ).
\]
Then $\lambda(\bmu^{+})$ is the partition obtained from $\lambda(\bmu)$ by deleting the first row.  Moreover, $\vert \bmu \vert - \vert \bmu^{+} \vert$ is equal to the largest strict rank jump in $\lambda(\bmu)$.

Similarly, let $s'$ be the maximal index such that $\mu_{s'} > \mu_{s'-1}$, and let
\[
\bmu^{-} = (\mu_1 , \ldots , \mu_{s'-1} , \mu_{s'} - 1, \mu_{s'+1}, \ldots , \mu_k ).
\]
Then $\lambda(\bmu^{-})$ is the partition obtained from $\lambda(\bmu)$ by deleting the leftmost column.  Moreover, $\vert \bmu \vert - \vert \bmu^{+} \vert$ is equal to $k-\alpha$, where $\alpha$ is the smallest strict rank jump in $\lambda(\bmu)$.
\end{lemma}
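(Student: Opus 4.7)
The plan is to prove the claim about $\bmu^+$ directly from the defining formulas, and then deduce the claim about $\bmu^-$ by invoking the transposition symmetry provided by Lemma~\ref{lem:SerreDualPartition}. The key observation is that since $s$ is the minimal index with $\mu_s < \mu_{s+1}$, we have $\mu_1 = \cdots = \mu_s$, so replacing $\mu_s$ by $\mu_s + 1$ preserves the weakly increasing order while affecting the defining sums for $x_m$ and $y_m$ only across the threshold $m = -\mu_s - 1$. A direct calculation gives
\begin{equation*}
x_m(\bmu^+) - x_m(\bmu) = \begin{cases} 1 & m \ge -\mu_s - 1 \\ 0 & m \le -\mu_s - 2 \end{cases}, \qquad y_m(\bmu) - y_m(\bmu^+) = \begin{cases} 0 & m \ge -\mu_s - 1 \\ 1 & m \le -\mu_s - 2 \end{cases}.
\end{equation*}

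To identify the partitions row-by-row, note that the length of row $y$ of $\lambda(\bmu)$ equals $r_y(\bmu) = x_{m^*}(\bmu)$, where $m^* = m^*_\bmu(y)$ is the largest $m$ with $y_m(\bmu) \ge y$, using the monotonicities of $x_m$ and $y_m$ in $m$. For $y \ge 1$, I would show $m^*_{\bmu^+}(y) = m^*_\bmu(y+1)$ and that this common value satisfies $m \le -\mu_s - 2$. In one direction, any $m$ with $y_m(\bmu) \ge y + 1 \ge 2$ forces $m \le -\mu_1 - 2 = -\mu_s - 2$, whence $y_m(\bmu^+) = y_m(\bmu) - 1 \ge y$. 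Conversely, the smallest entry of $\bmu^+$, which equals $\mu_s$ if $s > 1$ and $\mu_s + 1$ if $s = 1$, forces $m \le -\mu_s - 2$ whenever $y_m(\bmu^+) \ge 1$. Since $m \le -\mu_s - 2$ leaves $x_m$ unchanged by the formula above, we conclude $r_y(\bmu^+) = r_{y+1}(\bmu)$, the desired row-by-row equality.

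For the magnitude, I would expand $\vert \bmu \vert$ term-by-term. The only pairs $(i,j)$ whose contribution changes under $\mu_s \mapsto \mu_s + 1$ are those with $j = s$ or $i = s$. Pairs $(i,s)$ with $i < s$ contribute $0$ both before and after since $\mu_i = \mu_s$. For pairs $(s,j)$ with $j > s$, the summand $\max\{0, \mu_j - \mu_s - 1\}$ decreases by exactly $1$ when $\mu_j \ge \mu_s + 2$ and is unchanged otherwise. This gives
\begin{equation*}
\vert \bmu \vert - \vert \bmu^+ \vert = \vert \{ j : \mu_j \ge \mu_s + 2 \} \vert = \alpha_{-\mu_s - 2}(\bmu) = \alpha_{-\mu_1 - 2}(\bmu),
\end{equation*}
which is the largest strict rank jump in $\lambda(\bmu)$.

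The statements for $\bmu^-$ will follow by transposition. Setting $\nu = \bmu^T$, a direct check shows $(\nu^+)^T = \bmu^-$, so by Lemma~\ref{lem:SerreDualPartition} we have $\lambda(\bmu^-) = \lambda(\nu^+)^T$. Applying the first part to $\nu$, this is the transpose of $\lambda(\nu) = \lambda(\bmu)^T$ with its top row removed, which is $\lambda(\bmu)$ with its leftmost column removed. For the magnitudes, $\vert \bmu \vert = \vert \bmu^T \vert$ by reindexing, so $\vert \bmu \vert - \vert \bmu^- \vert = \vert \nu \vert - \vert \nu^+ \vert$ equals the largest strict rank jump of $\lambda(\nu)$; via the identity $\alpha_m(\nu) = k - \alpha_{-m-1}(\bmu)$, together with the fact that the involution $m \leftrightarrow -m - 1$ carries strict rank jumps of $\lambda(\nu)$ bijectively onto those of $\lambda(\bmu)$, this becomes $k$ minus the smallest strict rank jump of $\lambda(\bmu)$. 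The main obstacle I expect is the bookkeeping required to track the piecewise behavior across the threshold $m = -\mu_s - 1$ and to verify that the bijection on strict ranges is correctly set up.
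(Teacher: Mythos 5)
Your proposal is correct and follows essentially the same route as the paper: a direct computation with the defining formulas for $x_m$ and $y_m$ to handle $\bmu^+$ (the paper phrases the partition identification as a box-by-box containment $(x,y)\in\lambda(\bmu^+) \Leftrightarrow (x,y+1)\in\lambda(\bmu)$ rather than via row lengths, and its magnitude computation is the same pairwise bookkeeping as yours), followed by deducing the $\bmu^-$ statements from Lemma~\ref{lem:SerreDualPartition} and the identity $\bmu^- = ((\bmu^T)^+)^T$. You work out the transposition bookkeeping for the rank jumps more explicitly than the paper does, but the argument is the same.
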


\begin{proof}
We prove the statements about $\bmu^+$.  The statements about $\bmu^-$ follow from Lemma~\ref{lem:SerreDualPartition}, together with the observation that $\bmu^- = (\bmu^{T^+)^T}$.  Let $(x,y) \in \lambda (\bmu^+)$.  Then there exists an integer $m$ such that
\begin{align*}
x & \leq \sum_{i=1}^k \max \{ 0, \mu^+_i + m + 1 \} \mbox{ and } \\
y & \leq \sum_{i=1}^k \max \{ 0, -\mu^+_i - m - 1 \} .
\end{align*}
Since $y$ is positive and $\mu_s$ is minimal, we see that $m \leq -2-\mu_s$.  It follows that $\mu^+_s + m + 1 \leq 0$, so
\begin{align*}
x \leq \sum_{i=1}^k \max \{ 0, \mu^+_i + m + 1 \} &= \sum_{i=1}^k \max \{ 0, \mu_i + m + 1 \} \\
y + 1 \leq 1 + \sum_{i=1}^k \max \{ 0, -\mu^+_i - m - 1 \} &= \sum_{i=1}^k \max \{ 0, -\mu_i - m - 1 \} .
\end{align*}
So $(x,y+1) \in \lambda (\bmu)$.  An analogous argument shows that, if $(x,y) \in \lambda (\bmu)$, then either $y=1$ or $(x,y-1) \in \lambda (\bmu^+)$.

We now compute $\vert \bmu \vert - \vert \bmu^+ \vert$.  If $i,j \neq s$, then $\mu^+_j - \mu^+_i = \mu_j - \mu_i$.  If $i < s$, then $\mu^+_s - \mu^+_i - 1 = 0$.  Finally, if $j > s$, then $\mu^+_j - \mu^+_s = \mu_j - \mu_s - 1$.  Thus,
\begin{align*}
\vert \bmu \vert - \vert \bmu^+ \vert &= \sum_{i<j} \Big( \max \{ 0, \mu_j - \mu_i \} - \max \{ 0, \mu^+_j - \mu^+_i - 1 \} \Big) \\
&= \sum_{j=1}^k \Big( \max \{ 0, \mu_j - \mu_s - 1 \} - \max \{ 0, \mu_j - \mu_s - 2 \} \Big) .
\end{align*}
On the other hand, the largest rank jump in $\lambda (\bmu)$ is
\[
\alpha_{-\mu_s - 2} (\bmu) = \sum_{j=1}^k \Big( \max \{ 0, \mu_j - \mu_s - 1 \} - \max \{ 0, \mu_j - \mu_s - 2 \} \Big) .
\]
\end{proof}

\section{Cores and Displacement}
\label{Sec:Partitions}

This section contains the main combinatorial arguments that will be used in our examination of tropical splitting type loci.  We study an operation on partitions known as \emph{displacement}, and a certain class of partitions known in the combinatorics literature as \emph{$k$-cores}, which includes the $k$-staircases.  Because of Theorem~\ref{thm:TropClassify}, we are interested in $k$-uniform displacement tableaux on partitions of this type.  A tableau $t$ on a partition $\lambda$ can be thought of as a chain of partitions
\[
\emptyset = \lambda_0 \subseteq \lambda_1 \subseteq \cdots \subseteq \lambda_n = \lambda,
\]
where $\lambda_j = \{ (x,y) \in \lambda \vert t(x,y) \leq j \}$.  This observation naturally leads us to study posets of partitions, where the cover relations guarantee that the resulting tableaux satisfy $k$-uniform displacement.

\subsection{Diagonals and Displacement}

Following \cite{CLRW}, given $a \in \mathbb{Z}/k\mathbb{Z}$, we define the corresponding \emph{diagonal (mod $k$)} to be
\[
D_a := \{ (x,y) \in \mathbb{N}^2 \mbox{ } \vert \mbox{ } y-x \equiv a \pmod{k} \} .
\]

\begin{definition} \cite[Definition~6.1]{PfluegerNegativeBN}
Let $\lambda$ be a partition.  The \emph{upward displacement}\footnote{This terminology is consistent with \cite{PfluegerNegativeBN}.  In that paper, partitions are depicted according to the French convention, whereas ours are in the English style.  Because of this, the \emph{upward} displacement adds boxes \emph{below} the partition.} of $\lambda$ with respect to $a \in \mathbb{Z}/k\mathbb{Z}$ is the partition $\lambda_a^+$ obtained from $\lambda$ by adding all outside corners in $D_a$.

Similarly, the \emph{downward displacement} of $\lambda$ with respect to $a \in \mathbb{Z}/k\mathbb{Z}$ is the partition $\lambda_a^-$ obtained from $\lambda$ by deleting all inside corners in $D_a$.
\end{definition}

\begin{example}
The operations of upward displacement and downward displacement are not inverses.  For example, consider the partition $\lambda$ on the left in Figure~\ref{Fig:UpDown}, where each box has been decorated with its diagonal (mod 4).  The second partition in the figure is $\lambda^+_2$, the upward displacement with respect to 2 (mod 4), and the third partition is $(\lambda^+_2)^-_2$, the downward displacement of the second partition, again with respect to 2 (mod 4).  Note that the first partition and the third partition do not agree.

\begin{figure}[H]
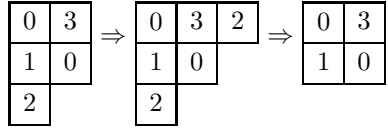

\begin{ytableau}
0 & 3\\
1 & 0\\
2\\
\end{ytableau}
$\Rightarrow$
\begin{ytableau}
0 & 3 & 2\\
1 & 0\\
2\\
\end{ytableau}
$\Rightarrow$
\begin{ytableau}
0 & 3\\
1 & 0\\
\end{ytableau}

\caption{Upward displacement followed by downward displacement does not necessarily yield the original partition.}
\label{Fig:UpDown}
\end{figure}

\end{example}

There are important examples of partitions for which the concatenation of an upward and a downward displacement \emph{is} the identity.

\begin{definition}
A partition $\lambda$ is called a $k$-\emph{core} if it can be obtained from the empty partition by a sequence of upward displacements with respect to congruence classes in $\mathbb{Z}/k\mathbb{Z}$.
\end{definition}

We write $\mathcal{P}_k$ for the poset of $k$-cores, where $\lambda' \leq \lambda$ if $\lambda$ can be obtained from $\lambda'$ by a sequence of upward displacements with respect to congruence classes in $\mathbb{Z}/k\mathbb{Z}$.  If $\lambda \in \mathcal{P}_k$, we write $\mathcal{P}_k (\lambda)$ for the interval (or principal order ideal) below $\lambda$ in $\mathcal{P}_k$.

\begin{example}

Figure~\ref{Fig:Interval} depicts a Hasse diagram for $\mathcal{P}_3 (\lambda (\bmu))$, where $\bmu=(-3,-1,1)$.  The diagram is drawn from left to right, rather than bottom to top, to preserve space on the page.  Note that $\lambda (\bmu)$ is a 3-core, and that every maximal chain in the interval below $\lambda (\bmu)$ has the same length.  As we shall see, the fact that the length of a maximal chain is 5 corresponds to the fact that any 3-uniform displacement tableau on $\lambda (\bmu)$ has at least 5 symbols.  The fact that there are 2 maximal chains corresponds to the fact that there are 2 such tableaux with alphabet $[5]$.

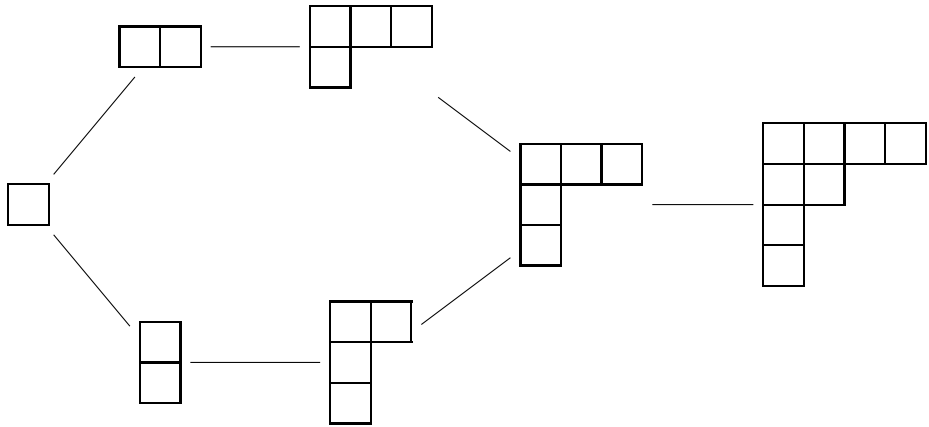
\begin{figure}[h]
\begin{tikzpicture}[scale=.7]
  \node (5) at (11,0) {\begin{ytableau}
{} & {} & {} & {}\\
{} & {}\\
{}\\
{}\\
\end{ytableau}};

  \node (4) at (6,0) {\begin{ytableau}
{} & {} & {}\\
{}\\
{}\\
\end{ytableau}};

  \node (3l) at (2,-3) {\begin{ytableau}
{} & {}\\
{}\\
{}\\
\end{ytableau}};

\node (3r) at (2,3) {\begin{ytableau}
{} & {} & {}\\
{}\\
\end{ytableau}};

 \node (2l) at (-2,-3) {\begin{ytableau}
{}\\
{}\\
\end{ytableau}};

  \node (2r) at (-2,3) {\begin{ytableau}
{} & {}\\
\end{ytableau}};

  \node (1) at (-4.5,0) {\begin{ytableau}
{}\\
\end{ytableau}};

  \draw (1) -- (2l) -- (3l) -- (4);
  
  \draw (5) -- (4) -- (3r) -- (2r) -- (1);
\end{tikzpicture}
\caption{A principal order ideal in $\mathcal{P}_3$.}
\label{Fig:Interval}

\end{figure}

\end{example}

\begin{remark}
Recall that, if $\bmu \leq \bmu'$, then $\lambda (\bmu') \subseteq \lambda (\bmu)$.  It is not necessarily true, however, that $\lambda (\bmu') \leq \lambda (\bmu)$ in the poset $\mathcal{P}_k$.  For example, if $\bmu = (-3,-1,1)$ and $\bmu' = (-3,0,0)$, then $\bmu \leq \bmu'$ but the partition $\lambda (\bmu')$, pictured in Figure~\ref{Fig:Boring}, is not contained in $\mathcal{P}_3 (\lambda (\bmu))$, pictured in Figure~\ref{Fig:Interval}.

\begin{figure}[H]
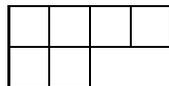

\begin{ytableau}
{} & {} & {} & {}\\
{} & {}\\
\end{ytableau}

\caption{The partition $\lambda (\bmu')$ is not in the principal order ideal of Figure~\ref{Fig:Interval}.}
\label{Fig:Boring}
\end{figure}
\end{remark}

We note the following simple observation.

\begin{lemma}
\label{Lem:Transpose}
The transpose of a $k$-core is a $k$-core.
\end{lemma}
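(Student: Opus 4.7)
The plan is to argue by induction on the length of a sequence of upward displacements producing $\lambda$ from the empty partition, using the fact that transposition commutes nicely with displacement.

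First I would record two compatibility facts. The transposition map $(x,y) \mapsto (y,x)$ carries the diagonal $D_a$ bijectively to $D_{-a}$, because $y-x \equiv a \pmod{k}$ is equivalent to $x-y \equiv -a \pmod{k}$. It also swaps outside corners of $\lambda$ with outside corners of $\lambda^T$: the conditions ``$x=1$ or $(x-1,y) \in \lambda$'' and ``$y=1$ or $(x,y-1) \in \lambda$'' characterizing an outside corner $(x,y) \notin \lambda$ become, after applying transposition, precisely the conditions that $(y,x) \notin \lambda^T$ is an outside corner. Combining these two observations gives the identity
\[
(\lambda_a^+)^T \;=\; (\lambda^T)_{-a}^+,
\]
valid for every partition $\lambda$ and every congruence class $a \in \mathbb{Z}/k\mathbb{Z}$.

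Now I would proceed by induction. The empty partition is its own transpose and is vacuously a $k$-core, giving the base case. For the inductive step, suppose $\lambda$ is a $k$-core and write $\lambda = (\lambda')_a^+$ with $\lambda'$ a $k$-core obtained by one fewer upward displacement. By induction $(\lambda')^T$ is a $k$-core, and by the commutation identity above,
\[
\lambda^T \;=\; \bigl((\lambda')_a^+\bigr)^T \;=\; ((\lambda')^T)_{-a}^+,
\]
which exhibits $\lambda^T$ as an upward displacement of the $k$-core $(\lambda')^T$ with respect to $-a \in \mathbb{Z}/k\mathbb{Z}$. Hence $\lambda^T$ is itself a $k$-core, completing the induction.

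There is no real obstacle here; the entire content is the two symmetry observations about diagonals and outside corners. The only place where one must be slightly careful is in verifying that \emph{all} outside corners in $D_a$ are transposed to \emph{all} outside corners in $D_{-a}$ (not just some of them), but this is immediate from the bijectivity of transposition on $\mathbb{N}^2$.
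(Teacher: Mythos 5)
Your proof is correct and follows the same route as the paper, which simply cites the identity $(\lambda_a^+)^T = (\lambda^T)_{-a}^+$ and leaves the induction implicit. You have merely spelled out the details that the paper omits.
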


\begin{proof}
This follows directly from the fact that $(\lambda^+_a)^T = (\lambda^T)^+_{-a}$.
\end{proof}

We now define some invariants of partitions.  Let $\lambda$ be a partition and $a \in \mathbb{Z}/k\mathbb{Z}$ a congruence class. We define
\[
C_a (\lambda) := \max \left\{ y \text{ } \vert \text{ } \exists (x,y) \in \lambda \cap D_a \text{ with } (x,y+1) \notin \lambda \right\} .
\]
In other words, $C_a (\lambda)$ is the height of the tallest column whose last box is in $D_a$.  If no such column exists, we define $C_a (\lambda)$ to be zero.  We write
\[
\bC (\lambda) = (C_0 (\lambda) , C_1 (\lambda) , \ldots , C_{k-1} (\lambda)),
\]
and further define
\[
\rho_k (\lambda) := \sum_{a \in \mathbb{Z}/k\mathbb{Z}} C_a (\lambda) .
\]

\begin{example}
Figure~\ref{Fig:RhoK} again depicts the partition $\lambda (\bmu)$, where $\bmu = (-3,-1,1)$.  Each column is labeled by the diagonal (mod 3) containing its last box.  The tallest column whose last box is in $D_0$ has height 4, the tallest column whose last box is in $D_1$ has height 1, and there is no column whose last box is in $D_2$.  Therefore, $\bC (\lambda (\bmu)) = (4,1,0)$, and
\[
\rho_3 (\lambda (\bmu)) = 4 + 1 + 0 = 5 = \vert \bmu \vert .  
\]

\begin{figure}[H]
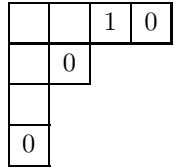

\begin{ytableau}
{} & {} & 1 & 0\\
{} & 0\\
{}\\
0\\
\end{ytableau}

\caption{The partition $\lambda (-3,-1,1)$, with each column labeled by the diagonal (mod 3) containing its last box.}
\label{Fig:RhoK}
\end{figure}
\end{example}

\subsection{Descent}

We now provide an alternate characterization of $k$-cores.  Most of the material in this and the next subsection has appeared previously in the literature on $k$-cores.  (See, for example, \cite{LapointeMorse, LLMSSZ}.)  We nevertheless include these arguments here, as they are fairly short and we wish to advertise these ideas.

\begin{definition}
We say that a partition $\lambda$ satisfies $k$-\emph{descent} if the following condition holds for every congruence class $a \in \mathbb{Z}/k\mathbb{Z}$.  Whenever $(x,y) \in \lambda \cap D_a$ and $(x+1,y) \notin \lambda$, then $C_{a-1} (\lambda) < y$.
\end{definition}

\begin{example}
The partition $\lambda$ pictured on the left in Figure~\ref{Fig:UpDown} does not satisfy 4-descent, because the last box in the first row is in $D_3$, and there exists a column whose last box is in $D_2$.  In other words, $(2,1) \in \lambda \cap D_3$ and $(3,1) \notin \lambda$, but $C_2 (\lambda) = 3 \geq 1$.

On the other hand, the partition $\lambda (\bmu)$ pictured in Figure~\ref{Fig:RhoK} does satisfy 3-descent.  There is no row whose last box is in $D_1$.  The last box in the first row is in $D_0$, and there is no column whose last box is in $D_2$.  The last box in the third row is in $D_2$, and $C_1 (\lambda (\bmu)) = 1 < 3$.
\end{example}

\begin{remark}
If $\lambda$ satisfies $k$-descent, then there is a congruence class $a \in \ZZ/k\ZZ$ such that $C_a (\lambda) = 0$.  Specifically, if $(x,1)$ is the last box in the first row, then by definition $C_{-x} (\lambda) = 0$.
\end{remark}

Our goal for this subsection is to prove the following.

\begin{proposition}
\label{Prop:Char}
A partition $\lambda$ is a $k$-core if and only if both $\lambda$ and $\lambda^T$ satisfy $k$-descent.
\end{proposition}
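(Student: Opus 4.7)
The plan is to prove each direction by induction, keeping both the hypothesis on $\lambda$ and on $\lambda^T$ in play throughout.

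For the forward direction, Lemma~\ref{Lem:Transpose} ensures that if $\lambda$ is a $k$-core then so is $\lambda^T$, so I induct on the number of upward displacements used to build $\lambda$. The base case $\lambda = \emptyset$ is vacuous. For the inductive step, assume $\lambda$ and $\lambda^T$ both satisfy $k$-descent; I aim to show $\lambda^+_a$ does as well (the statement for $(\lambda^+_a)^T = (\lambda^T)^+_{-a}$ will then follow by symmetry). I classify the end-of-rows of $\lambda^+_a$ into three types: (i) newly-added outside corners of $\lambda$ on $D_a$; (ii) old end-of-rows on diagonals $D_b$ with $b \not\equiv a+1 \pmod{k}$; and (iii) old end-of-rows $(x_0, y_0)$ on $D_{a+1}$ that survive because $(x_0+1, y_0-1) \notin \lambda$. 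Type (ii) is immediate from $k$-descent for $\lambda$, since $C_{b-1}$ is unaffected by the displacement.

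For type (i), take a new outside corner $(x, y) \in D_a$; the goal is to show every end-of-column of $\lambda$ on $D_{a-1}$ at height $\geq y$ gets capped off in $\lambda^+_a$. When $x = 1$, column $1$ has height $y-1$, so by the weak monotonicity of column heights no end-of-column on $D_{a-1}$ can reach height $\geq y$. When $x > 1$, the delicate case is an end-of-column $(x_0, y_0) \in D_{a-1}$ with $y_0 \geq y$ whose neighboring column $x_0 - 1$ has exactly the same height $y_0$; then $(x_0 - 1, y_0)$ is an end-of-column on $D_a$, and the hypothesis that $\lambda^T$ satisfies $k$-descent --- equivalently, that for every end-of-column $(u,v) \in D_c \cap \lambda$, no end-of-row on $D_{c+1}$ has $x$-coordinate $\geq u$ --- forbids any end-of-row on $D_{a+1}$ at $x$-coordinate $\geq x_0 - 1$. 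But the outside corner $(x, y)$ produces precisely such an end-of-row $(x-1, y)$ on $D_{a+1}$, contradicting the bound. Type (iii) is dispatched by the analogous argument obtained by exchanging the roles of rows and columns (using the primal descent to produce the contradictory end-of-column).

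For the reverse direction, I induct on $|\lambda|$. Let $x_{\max}$ be the length of the first row and set $a \equiv 1 - x_{\max} \pmod{k}$. Applying $k$-descent to the end-of-row $(x_{\max}, 1) \in D_a$ gives $C_{a-1}(\lambda) = 0$. A short case analysis on putative outside corners on $D_a$ --- $y = 1$ lands on $D_{a-1}$, not $D_a$; $y > 1$ would produce an end-of-column on $D_{a-1}$ of height $y - 1 \geq 1$, contradicting $C_{a-1} = 0$ --- shows $\lambda$ has no outside corner on $D_a$. Consequently $\lambda = (\lambda^-_a)^+_a$. It then remains to verify that both $\lambda^-_a$ and $(\lambda^-_a)^T$ satisfy $k$-descent, so the inductive hypothesis yields that $\lambda^-_a$, and hence $\lambda$, is a $k$-core.

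The main obstacle is the bookkeeping around how $k$-descent is transported through displacement --- specifically the interaction between new outside corners on $D_a$ and existing end-of-columns on $D_{a-1}$. Both directions pivot on invoking the transpose hypothesis at precisely the right moment to forbid a bad column configuration that the primal descent condition alone would not rule out. Once that pivot is identified, the remaining cases reduce to routine comparisons of the $C_c$-values before and after displacement.
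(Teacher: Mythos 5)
Your forward direction is sound, but your reverse direction has a genuine gap in the choice of diagonal. You peel off the diagonal $D_a$ with $a \equiv 1 - x_{\max}$, the diagonal of the last box of the first row. But nothing guarantees that $\lambda$ has an \emph{inside} corner on that diagonal: if the first two rows of $\lambda$ have equal length, then $(x_{\max},1)$ is not an inside corner, and $D_a$ may contain no inside corner at all. Concretely, take $\lambda$ to be the $2\times 2$ square with $k=4$: it is a $4$-core, both $\lambda$ and $\lambda^T$ satisfy $4$-descent, its unique inside corner $(2,2)$ lies on $D_0$, yet your recipe gives $a \equiv -1$. Then $\lambda^-_a = \lambda$, the identity $\lambda = (\lambda^-_a)^+_a$ holds vacuously, and the induction on $\vert \lambda \vert$ makes no progress. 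The repair is to take $a = y - x$ for an actual inside corner $(x,y)$ (as the paper does), but then your easy argument that $\lambda$ has no outside corner on $D_a$ — which rested on $C_{a-1}(\lambda) = 0$, a fact special to the first-row diagonal — no longer applies. For a general diagonal containing an inside corner one must rule out a coexisting outside corner using \emph{both} descent hypotheses (this is Lemma~\ref{lem:FindColLeft}: the inside corner $(x,y)$ and transpose-descent force $x' - 1 < x$ for any putative outside corner $(x',y')$ on $D_a$, while primal descent forces $y' - 1 < y$, whence $(x',y') \in \lambda$, a contradiction). That is the missing step.

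On the forward direction: your argument is correct but uses more than necessary. In your type (i) case, the surviving tall end-of-column $(x_0,y_0) \in D_{a-1}$ forces its left neighbor $(x_0-1,y_0)$ to be an end-of-column on $D_a$, so $C_a(\lambda) \geq y_0 \geq y$; but the new outside corner itself supplies the end-of-row $(x-1,y) \in \lambda \cap D_{a+1}$, and \emph{primal} descent applied to it already gives $C_a(\lambda) < y$ — no appeal to $\lambda^T$ is needed. This is how the paper's Lemma~\ref{Lem:RowColCondition} is stated and proved: primal $k$-descent alone is preserved by upward and downward displacement, which keeps the hypotheses of the induction lighter. Your type (iii) sketch is correct in outline, but be aware the key point there is that the survival condition $(x_0+1,y_0-1)\notin\lambda$ makes $(x_0,y_0-1)$ an end-of-row on $D_a$, so primal descent gives $C_{a-1}(\lambda) < y_0 - 1$, which is exactly what bounds the newly extended columns ending on $D_a$.
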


To prove Proposition~\ref{Prop:Char}, we will need a few preliminary results.  First, we examine the behavior of inside corners in partitions that satisfy $k$-descent.

\begin{lemma}
\label{lem:LeftmostCornerA}
Let $\lambda$ be a partition that satisfies $k$-descent, and let $a \in \mathbb{Z}/k\mathbb{Z}$ be a congruence class.  If $\lambda$ has an inside corner in $D_a$, then the tallest column whose last box is in $D_a$ contains an inside corner.
\end{lemma}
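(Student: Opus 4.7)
The plan is to argue by contradiction. Let $x^*$ be (a) column of maximal depth $C_a(\lambda)$ whose last box $(x^*, C_a(\lambda))$ lies in $D_a$; such a column exists since the hypothesis supplies an inside corner in $D_a$, which in particular is the last box of some column in $D_a$. Suppose toward contradiction that $(x^*, C_a(\lambda))$ is not an inside corner of $\lambda$. Then by the defining property of column depth, $(x^*, C_a(\lambda) + 1) \notin \lambda$, so the only way $(x^*, C_a(\lambda))$ can fail to be an inside corner is that $(x^* + 1, C_a(\lambda)) \in \lambda$.

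From here the proof is squeezed between two estimates on $C_{a-1}(\lambda)$. On one hand, the partition property forces $d_{x^* + 1} \leq d_{x^*} = C_a(\lambda)$, and the previous paragraph gives the reverse inequality, so $d_{x^* + 1} = C_a(\lambda)$. The last box of column $x^* + 1$ is thus $(x^* + 1, C_a(\lambda)) \in D_{a-1}$, which shows
\[
C_{a-1}(\lambda) \ \geq \ C_a(\lambda).
\]
On the other hand, the hypothesis supplies an inside corner $(x_1, y_1) \in \lambda \cap D_a$. This inside corner is the last box of its column, so $y_1 = d_{x_1}$, and as column $x_1$ has last box in $D_a$ we obtain $y_1 \leq C_a(\lambda)$. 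Since $(x_1 + 1, y_1) \notin \lambda$, the $k$-descent hypothesis applied to $(x_1, y_1)$ yields
\[
C_{a-1}(\lambda) \ < \ y_1 \ \leq \ C_a(\lambda).
\]
These two bounds contradict each other, completing the argument.

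The conceptual obstacle is recognizing that the key consequence of "$(x^*, C_a(\lambda))$ fails to be an inside corner'' is a \emph{lower} bound on $C_{a-1}(\lambda)$ (from the next column to the right having the same depth, shifted one diagonal), since this is exactly the quantity that $k$-descent controls from above via \emph{any} inside corner in $D_a$. Once this matching is noticed, the rest is bookkeeping: the partition property pins down $d_{x^*+1}$, the definition of inside corner gives $y_1 = d_{x_1} \leq C_a(\lambda)$, and $k$-descent delivers the strict inequality that closes the contradiction. A minor subtlety worth mentioning (but not needing a separate proof here) is that the "tallest column in $D_a$'' is essentially unique under these hypotheses: two columns of depth $C_a(\lambda)$ both with last box in $D_a$ would span a run of at least $k+1$ consecutive equal-depth columns, producing a bottom box in $D_{a-1}$ at depth $C_a(\lambda)$ and again contradicting $k$-descent via the assumed inside corner in $D_a$.
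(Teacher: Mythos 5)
Your proof is correct and is essentially the same argument as the paper's: assume the last box of the tallest $D_a$-column is not an inside corner, conclude the column to its right has equal height with last box in $D_{a-1}$ so that $C_{a-1}(\lambda)\geq C_a(\lambda)$, and contradict the bound $C_{a-1}(\lambda)<y_1\leq C_a(\lambda)$ coming from applying $k$-descent to the hypothesized inside corner. The extra remark on uniqueness of the tallest column is harmless but not needed.
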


\begin{proof}
Let $(x,y) \in \lambda \cap D_a$ be an inside corner, and consider the tallest column whose last box is in $D_a$.  If it doesn't contain an inside corner, then the column immediately to the right has the same height, and its last box is in $D_{a-1}$.  But the height of this column is greater than $y$, contradicting the definition of $k$-descent.
\end{proof}

\begin{lemma}
\label{lem:CongAFurtherLeft}
Let $\lambda$ be a partition that satisfies $k$-descent.  Then $\lambda$ has an inside corner in $D_a$ if and only if $C_{a-1}(\lambda) < C_a (\lambda)$.
\end{lemma}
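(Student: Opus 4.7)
The plan is to prove the two implications separately, each of them being a short argument that combines the definition of $C_a(\lambda)$ with the $k$-descent hypothesis and, for the forward direction, the preceding Lemma~\ref{lem:LeftmostCornerA}.

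For the forward implication, suppose $(x,y) \in \lambda \cap D_a$ is an inside corner. By Lemma~\ref{lem:LeftmostCornerA}, the tallest column whose last box lies in $D_a$ itself contains an inside corner $(x_0, C_a(\lambda))$, so in particular $C_a(\lambda) \geq 1$. Applying the $k$-descent condition to this inside corner (which satisfies $(x_0+1, C_a(\lambda)) \notin \lambda$) immediately yields $C_{a-1}(\lambda) < C_a(\lambda)$, as desired.

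For the reverse implication, suppose $C_{a-1}(\lambda) < C_a(\lambda)$. Pick the tallest column whose last box lies in $D_a$; call it column $x_0$, so that $(x_0, C_a(\lambda)) \in \lambda \cap D_a$ and $(x_0, C_a(\lambda)+1) \notin \lambda$. I would then argue that $(x_0+1, C_a(\lambda)) \notin \lambda$, which together with the previous condition shows $(x_0, C_a(\lambda))$ is an inside corner in $D_a$. If, on the contrary, $(x_0+1, C_a(\lambda)) \in \lambda$, then since column heights are weakly decreasing, column $x_0+1$ has height exactly $C_a(\lambda)$ (it cannot exceed the height of column $x_0$). The last box of column $x_0+1$ is then $(x_0+1, C_a(\lambda))$, which lies in $D_{a-1}$ because $C_a(\lambda) - (x_0+1) \equiv a-1 \pmod{k}$. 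But this forces $C_{a-1}(\lambda) \geq C_a(\lambda)$, contradicting the hypothesis.

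I do not expect any serious obstacle here: the statement really is just an unpacking of the definitions of $C_a$ and $k$-descent, together with the nontrivial input from Lemma~\ref{lem:LeftmostCornerA} in the forward direction. The one subtle point to keep straight is the congruence bookkeeping, namely that the column immediately to the right of one ending in $D_a$, if it has the same height, ends in $D_{a-1}$; this is where the particular choice of which neighboring diagonal appears in the statement gets pinned down.
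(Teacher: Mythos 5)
Your proof is correct and follows essentially the same route as the paper: the forward direction invokes Lemma~\ref{lem:LeftmostCornerA} to locate an inside corner at the top of the tallest column ending in $D_a$ and then applies $k$-descent, and the reverse direction shows that the last box of that tallest column is an inside corner by ruling out a box immediately to its right via the diagonal bookkeeping $C_a(\lambda) - (x_0+1) \equiv a-1 \pmod{k}$. The only difference is that you spell out the contradiction argument for $(x_0+1, C_a(\lambda)) \notin \lambda$ a bit more explicitly than the paper does.
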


\begin{proof}
First, suppose that $\lambda$ has an inside corner in $D_a$.  By Lemma~\ref{lem:LeftmostCornerA}, the tallest column of $\lambda$ whose last box is in $D_a$ ends in an inside corner.  In other words, there is an $x$ such that $(x,C_a (\lambda)) \in \lambda \cap D_a$ and $(x+1,C_a(\lambda)) \notin \lambda$.  Thus, by the definition of $k$-descent, we see that $C_{a-1} (\lambda) < C_a (\lambda)$.

Conversely, suppose that $C_{a-1}(\lambda) < C_a (\lambda)$, and consider the tallest column of $\lambda$ whose last box is in $D_a$.  Let $(x,C_a (\lambda))$ be the last box in this column.  By definition, $(x,C_a (\lambda)+1) \notin \lambda$.  Since $C_{a-1}(\lambda) < C_a (\lambda)$ and $(x+1,C_a (\lambda)) \in D_{a-1}$, we see that $(x+1,C_a (\lambda)) \notin \lambda$.  Thus, $(x,C_a (\lambda)) \in D_a$ is an inside corner.
\end{proof}

\begin{lemma}
\label{lem:FindColLeft}
Let $\lambda$ be a partition, and suppose that both $\lambda$ and $\lambda^T$ satisfy $k$-descent.  For any congruence class $a \in \mathbb{Z}/k\mathbb{Z}$, $\lambda$ cannot have both an inside corner and an outside corner in $D_a$.
\end{lemma}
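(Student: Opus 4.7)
The plan is to argue by contradiction: assume $\lambda$ has both an inside corner $(x_1,y_1)$ and an outside corner $(x_2,y_2)$, each in $D_a$. By Lemma~\ref{lem:CongAFurtherLeft}, the existence of the inside corner in $D_a$ forces $C_{a-1}(\lambda) < C_a(\lambda)$, so the goal is to extract the opposite inequality $C_{a-1}(\lambda) \geq C_a(\lambda)$ from the outside corner.

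The main case is $x_2, y_2 \geq 2$. Here $(x_2-1,y_2) \in D_{a+1}$ is the last box of row $y_2$ of $\lambda$, and $(x_2,y_2-1) \in D_{a-1}$ is the last box of column $x_2$. Applying $k$-descent on $\lambda$ to row $y_2$ yields $C_a(\lambda) < y_2$, while the truncated column $x_2$ directly gives $C_{a-1}(\lambda) \geq y_2-1$. Chaining produces $C_a(\lambda) \leq y_2 - 1 \leq C_{a-1}(\lambda)$, contradicting the inside-corner inequality.

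For the boundary case $x_2 = 1$ (so $y_2 \geq 2$, since otherwise $\lambda = \emptyset$ and there is no inside corner), column $1$ has height $y_2-1$ with last box in $D_{a-1}$, giving $C_{a-1}(\lambda) \geq y_2-1$; since column heights of a Ferrers diagram weakly decrease from left to right, no column of $\lambda$ has height $\geq y_2$, hence $C_a(\lambda) \leq y_2-1 \leq C_{a-1}(\lambda)$ and we again contradict Lemma~\ref{lem:CongAFurtherLeft}. The symmetric case $y_2 = 1$ I plan to reduce to the previous one via transposition: transposition sends inside (resp.\ outside) corners to inside (resp.\ outside) corners and swaps $D_a$ with $D_{-a}$, and the hypothesis that $\lambda^T$ satisfies $k$-descent is exactly what makes the transposed argument go through.

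The main obstacle is the edge-case bookkeeping. The na\"\i ve inequalities extracted from the outside corner (comparing $y_1$ with $y_2$ or $x_1$ with $x_2$) are awkward to combine, and it is only by routing everything through the single comparison $C_{a-1}(\lambda)$ vs.\ $C_a(\lambda)$ provided by Lemma~\ref{lem:CongAFurtherLeft} that all cases unify into the clean pattern $C_a(\lambda) \leq y_2-1 \leq C_{a-1}(\lambda)$. In the interior case this comes for free from $k$-descent on $\lambda$; in the $x_2=1$ case the lost $k$-descent input is replaced by the elementary monotonicity of column heights; and in the $y_2=1$ case it is replaced precisely by the hypothesis that $\lambda^T$ satisfies $k$-descent.
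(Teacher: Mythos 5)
Your proof is correct, but it takes a slightly different route from the paper's. The paper's argument never invokes Lemma~\ref{lem:CongAFurtherLeft}: from the outside corner $(x',y')$ it extracts only $C_{a-1}(\lambda) \geq y'-1$ (reading $y'=1$ as the trivial bound $C_{a-1}(\lambda)\geq 0$), pairs this with the $k$-descent inequality $C_{a-1}(\lambda) < y$ coming from the inside corner $(x,y)$, and transposes to get $y' \leq y$ and $x' \leq x$ simultaneously; the contradiction is then simply that $(x',y')$ must lie in $\lambda$ because partitions are downward closed. You instead pit the two corners against each other through the single comparison $C_{a-1}(\lambda)$ versus $C_a(\lambda)$, using Lemma~\ref{lem:CongAFurtherLeft} for the inside corner and both neighbors of the outside corner for the reverse inequality. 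Each step of yours checks out: in the main case $(x_2-1,y_2)\in\lambda\cap D_{a+1}$ with $(x_2,y_2)\notin\lambda$ does give $C_a(\lambda)<y_2$ by $k$-descent, and $(x_2,y_2-1)$ being the last box of its column gives $C_{a-1}(\lambda)\geq y_2-1$; the $x_2=1$ and $y_2=1$ boundary cases are handled correctly (and the $(1,1)$ case is indeed vacuous). The trade-off is that your version costs a three-way case split and an appeal to an auxiliary lemma, where the paper's symmetric treatment of the two coordinates absorbs the boundary cases automatically; on the other hand, your main case isolates more precisely which hypotheses do the work (only the $k$-descent of $\lambda$ itself is needed unless the outside corner sits in the first row), and it fits naturally with the $\bC(\lambda)$-bookkeeping used later in Proposition~\ref{prop:ColChange}.
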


\begin{proof}
Suppose that $(x,y) \in D_a$ is an inside corner and $(x',y') \in D_a$ is an outside corner.  By definition, either $y'=1$ or $(x',y'-1) \in \lambda \cap D_{a-1}$, hence $C_{a-1}(\lambda) \geq y'-1$.  Since $\lambda$ satisfies $k$-descent, we see that $y'-1 < y$.  Similarly, since $\lambda^T$ satisfies $k$-descent, we see that $x'-1 < x$.  Together, these inequalities imply that $(x',y') \in \lambda$, contradicting our assumption that $(x',y')$ is an outside corner.
\end{proof}

Lemma~\ref{lem:FindColLeft} implies that, when restricted to partitions satisfying $k$-descent, the operations of upward and downward displacement are inverses.

\begin{lemma}
\label{lem:DisplacementsInvertible}
Let $\lambda$ be a partition, and suppose that both $\lambda$ and $\lambda^T$ satisfy $k$-descent.  If $\lambda$ has an inside corner in $D_a$, then $\lambda = (\lambda^-_a)^+_a$.  Similarly, if $\lambda$ has an outside corner in $D_a$, then $\lambda = (\lambda^+_a)^-_a$.
\end{lemma}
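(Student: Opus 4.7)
The plan is to prove both equalities by a direct matching of corners in $D_a$, with Lemma~\ref{lem:FindColLeft} providing the essential input that makes the composition of upward and downward displacement in a single diagonal well-behaved. That lemma guarantees that under our hypotheses $\lambda$ cannot have both an inside corner and an outside corner in $D_a$; so whichever type of corner the hypothesis produces, the other is forbidden, and this exclusivity is exactly what forces the two displacement operations to be mutually inverse when restricted to $D_a$.

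For the first identity, assume $\lambda$ has an inside corner in $D_a$. Every inside corner $(x,y)\in\lambda\cap D_a$ becomes an outside corner of $\lambda^-_a$: the box $(x,y)$ itself is deleted, while its neighbors $(x-1,y)$ and $(x,y-1)$ remain, since they cannot themselves be inside corners of $\lambda$ (each has $(x,y)\in\lambda$ sitting adjacent to it). Conversely, suppose $(x,y)\in D_a$ is an outside corner of $\lambda^-_a$. If $(x,y)\in\lambda$, then $(x,y)$ must have been deleted, so it was an inside corner of $\lambda$ in $D_a$. If $(x,y)\notin\lambda$, then the fact that its left and upper neighbors lie in $\lambda^-_a\subseteq\lambda$ (or on the boundary) forces $(x,y)$ to be an outside corner of $\lambda$ in $D_a$, contradicting Lemma~\ref{lem:FindColLeft}. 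Hence the upward displacement $(\lambda^-_a)^+_a$ adds back precisely the inside corners of $\lambda$ that were removed, restoring $\lambda$.

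The proof of $\lambda=(\lambda^+_a)^-_a$ is strictly parallel, with the roles of inside and outside corners interchanged. Each outside corner $(x,y)\in D_a$ of $\lambda$ becomes an inside corner of $\lambda^+_a$, since its right and lower neighbors lie in $D_{a-1}$ and $D_{a+1}$ (so are not added by upward displacement with respect to $D_a$) and cannot already lie in $\lambda$ (else $(x,y)\in\lambda$ by the partition condition). Conversely, any inside corner of $\lambda^+_a$ in $D_a$ is either a newly added box---hence an outside corner of $\lambda$ in $D_a$---or it already lay in $\lambda$, in which case it would be an inside corner of $\lambda$ in $D_a$, again contradicting Lemma~\ref{lem:FindColLeft}. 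The main obstacle is purely organizational: one must check that simultaneous corner operations in a single diagonal do not interact, which follows from the observation that two distinct inside (or outside) corners in $D_a$ cannot share a row or column, thanks to the partition condition combined with their common diagonal residue.
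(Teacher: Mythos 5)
Your proof is correct and follows essentially the same route as the paper: both arguments hinge on Lemma~\ref{lem:FindColLeft} to rule out the "wrong" type of corner in $D_a$, so that the second displacement exactly undoes the first. The only cosmetic difference is that you verify the corner-matching in both directions explicitly for each identity, whereas the paper proves the first identity by a two-containment argument and declares the second analogous.
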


\begin{proof}
We show the first equality above.  The second equality follows from an analogous argument.  Note that $\lambda \subseteq (\lambda^-_a)^+_a$.  To see the reverse containment, let $(x,y) \in (\lambda^-_a)^+_a$.  If $(x,y) \notin D_a$ or $(x,y)$ is not an inside corner of $(\lambda^-_a)^+_a$, then $(x,y) \in \lambda^-_a \subset \lambda$.  On the other hand, if $(x,y) \in D_a$ is an inside corner of $(\lambda^-_a)^+_a$, then neither $(x-1,y)$ nor $(x,y-1)$ are in $D_a$, so either $x=1$ or $(x-1,y) \in \lambda$, and either $y=1$ or $(x,y-1) \in \lambda$.  It follows that either $(x,y) \in \lambda$ or $(x,y)$ is an outside corner of $\lambda$.  By Lemma~\ref{lem:FindColLeft}, however, $\lambda$ cannot have an outside corner in $D_a$.  Thus, $(x,y) \in \lambda$, and $(\lambda^-_a)^+_a \subseteq \lambda$.
\end{proof}

Crucially, the $k$-descent property is preserved by upward and downward displacements.

\begin{lemma}
\label{Lem:RowColCondition}
Let $\lambda$ be a partition that satisfies $k$-descent.  Then, for any $a \in \mathbb{Z}/k\mathbb{Z}$, $\lambda_a^+$ and $\lambda_a^-$ also satisfy $k$-descent.
\end{lemma}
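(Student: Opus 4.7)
My plan is to prove both halves---$\mu = \lambda^+_a$ and $\mu = \lambda^-_a$---by direct case analysis on boxes that are rightmost in their rows in $\mu$. The preliminary step will be to track exactly how the column heights $h_\mu(x)$ differ from $h_\lambda(x)$, and therefore how each $C_b(\mu)$ compares to $C_b(\lambda)$. Upward displacement only extends columns whose last box was in $D_{a-1}$: such columns acquire a new last box in $D_a$. Consequently $C_b(\mu) = C_b(\lambda)$ for $b \notin \{a, a-1\}$, $C_{a-1}(\mu) \le C_{a-1}(\lambda)$, and $C_a(\mu) \ge C_a(\lambda)$; the only potential failure of $k$-descent for $\mu$ is therefore the growth of $C_a$. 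Downward displacement is dual, and only $C_{a-1}(\mu)$ can grow.

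For $\mu = \lambda^+_a$, I will take $(x,y) \in \mu \cap D_b$ with $(x+1,y) \notin \mu$ and show $C_{b-1}(\mu) < y$. When $(x,y) \in \lambda$, the $k$-descent of $\lambda$ gives $C_{b-1}(\lambda) < y$, and this transfers to $\mu$ immediately unless $b = a+1$. In that critical subcase, the hypothesis $(x+1,y) \notin \mu$ forces $(x+1,y)$ not to be an outside corner of $\lambda$, so $h_\lambda(x+1) \le y - 2$; this means row $y-1$ in $\lambda$ also ends at column $x$, and its rightmost box lies in $D_a$. Applying $k$-descent of $\lambda$ to this new box yields $C_{a-1}(\lambda) \le y - 2$, and since every extended column originally had its last box in $D_{a-1}$, its new height is at most $C_{a-1}(\lambda) + 1 \le y - 1 < y$, so $C_a(\mu) < y$ as needed. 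When instead $(x,y) \in \mu \setminus \lambda$ is a newly added outside corner (forcing $b = a$), $(x-1, y) \in \lambda \cap D_{a+1}$ is rightmost in row $y$ of $\lambda$, so $k$-descent of $\lambda$ gives $C_a(\lambda) < y$; any column $x''$ that still contributes to $C_{a-1}(\mu)$ must fail the extension condition, which forces $h_\lambda(x''-1) = h_\lambda(x'')$ and thus places $(x''-1, h_\lambda(x''))$ in $D_a$, bounding $h_\lambda(x'') \le C_a(\lambda) < y$.

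The argument for $\mu = \lambda^-_a$ will be dual: the only dangerous case is $b = a$, and the hypotheses $(x,y) \in \mu \cap D_a$ with $(x+1, y) \notin \mu$ force $(x, y+1) \in \lambda$ (otherwise $(x,y)$ would have been an inside corner of $\lambda$ and been deleted). Then $(x, y+1)$ is rightmost in row $y+1$ of $\lambda$ and lies in $D_{a+1}$, so $k$-descent of $\lambda$ gives $C_a(\lambda) \le y$; every column whose last box in $\lambda$ was an inside corner in $D_a$ consequently had height at most $y$, so its truncated height is at most $y - 1 < y$.

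The main obstacle will be bookkeeping around which boxes remain rightmost in their row after displacement, since a box rightmost in $\lambda$ need not be rightmost in $\mu$ and conversely. Once that is set up, the observation that makes each bad subcase go through is that the required bound is supplied not by $k$-descent at $(x,y)$ itself but by $k$-descent at an adjacent row (row $y-1$ for upward displacement in the $b = a+1$ subcase, row $y$ for the newly-added-box subcase, and row $y+1$ for downward displacement), which is ideally positioned to control the single diagonal whose $C$-value is changing under the displacement.
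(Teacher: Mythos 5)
Your treatment of $\lambda_a^+$ is correct and essentially the paper's own argument: the same two cases (a box already in $\lambda$ versus a newly added outside corner), the same appeal to $k$-descent at the adjacent box $(x,y-1)\in D_a$ to bound the heights of extended columns when $b=a+1$, and the same observation that a column whose last box survives in $D_{a-1}$ must be short because otherwise the box above it would have been an outside corner in $D_a$. (Both you and the paper silently assume $x>1$ when invoking $(x-1,y)\in\lambda$ in the newly-added-box case; when $x=1$ every column of $\lambda$ already has height less than $y$, so that case is trivial.)

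The gap is in the downward half. The claim that ``the only dangerous case is $b=a$'' is false: there is a second dangerous case, $b=a+1$, which is the true dual of your newly-added-box case. If $(x+1,y)$ is a deleted inside corner of $\lambda$ in $D_a$, then $(x,y)\in D_{a+1}$ is rightmost in its row of $\lambda_a^-$ but was \emph{not} rightmost in its row of $\lambda$, so $k$-descent of $\lambda$ cannot be applied at $(x,y)$; and the monotonicity $C_a(\lambda_a^-)\le C_a(\lambda)$ is useless here, since $C_a(\lambda)\ge y$ (the column of the deleted corner itself ends in $D_a$ at height $y$). Concretely, for $\lambda=\lambda(-3,-1,1)$ with $k=3$ and $a=0$, deleting the three inside corners exposes $(3,1)\in D_1$ as a newly rightmost box while $C_0(\lambda)=4$. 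The case is handled by applying $k$-descent of $\lambda$ at the deleted corner $(x+1,y)\in D_a$, which gives $C_{a-1}(\lambda)<y$, and then noting that any column of $\lambda_a^-$ whose last box lies in $D_a$ was not truncated, so that last box is not an inside corner of $\lambda$; the column immediately to its right therefore has equal height with last box in $D_{a-1}$, so the height is at most $C_{a-1}(\lambda)<y$, giving $C_a(\lambda_a^-)<y$ as required. This is the same style of argument you use elsewhere, but it is a case your outline explicitly excludes, so it must be added before the proof is complete.
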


\begin{proof}
We prove the statement about $\lambda_a^+$.  The statement about $\lambda_a^-$ holds by an analogous argument.  Suppose that $(x,y) \in \lambda_a^+$ and $(x+1,y) \notin \lambda_a^+$.  By the definition of $k$-descent, either $(x,y) \notin \lambda$, or $(x,y) \in \lambda$ and $C_{y-x-1}(\lambda) < y$.  We first consider the case where $(x,y) \notin \lambda$.  Since $(x,y) \in \lambda_a^+$, this implies that $(x,y) \in D_a$.  Note that $(x-1,y) \in \lambda \cap D_{a+1}$ and $(x,y) \notin \lambda$.  By the definition of $k$-descent, we see that $C_a (\lambda) < y$.  It follows that, if $(x',y') \in \lambda \cap D_{a-1}$ with $y' \geq y$ and $(x',y'+1) \notin \lambda$, then $(x',y'+1)$ is an outside corner, and thus in $\lambda_a^+$.  From this we obtain $C_{a-1} (\lambda_a^+) < y$.

On the other hand, if $(x,y) \in \lambda$, then $C_{y-x-1} (\lambda) < y$.  We may assume that $(x,y) \in D_{a+1}$, because otherwise we have $C_{y-x-1}(\lambda^+_a) \leq C_{y-x-1} (\lambda)$.  Then, since $(x+1,y) \notin \lambda^+_a$, we must have $(x+1,y-1) \notin \lambda$.  Since $\lambda$ satisfies $k$-descent and $(x,y-1) \in \lambda \cap D_a$, we see that $C_{a-1} (\lambda) < y-1$.  Since $C_a (\lambda) < y$ and $C_{a-1} (\lambda) < y-1$, we see that $C_a (\lambda^+_a) < y$.
\end{proof}

We now establish that this is an alternate characterization of $k$-cores.

\begin{proof}[Proof of Proposition~\ref{Prop:Char}]
First, let $\lambda$ be a $k$-core.  By Lemma~\ref{Lem:Transpose}, $\lambda^T$ is a $k$-core.  It therefore suffices to prove that $\lambda$ satisfies $k$-descent.  By definition, $\lambda$ is obtained from the empty partition by a sequence of upward displacements with respect to congruence classes in $\mathbb{Z}/k\mathbb{Z}$.  We prove that $\lambda$ satisfies $k$-descent by induction on the number of upward displacements in this sequence.  The base case is the empty partition, which satisfies $k$-descent trivially.  The inductive step follows from Lemma~\ref{Lem:RowColCondition}, which says that the upward displacement of a partition satisfying $k$-descent also satisfies $k$-descent.

Now, let $\lambda$ be a partition such that both $\lambda$ and $\lambda^T$ satisfy $k$-descent.  We prove that $\lambda$ is a $k$-core by induction on the number of boxes in $\lambda$.  The base case is the empty partition, which is a $k$-core.  If $\lambda$ is non-empty, then there is an inside corner $(x,y) \in \lambda$.  By Lemma~\ref{Lem:RowColCondition}, the downward displacements $\lambda^-_{y-x}$ and $(\lambda^-_{y-x})^T = (\lambda^T)^-_{x-y}$ satisfy $k$-descent.  By induction, $\lambda^-_{y-x}$ is therefore a $k$-core, hence by definition, $(\lambda^-_{y-x})^+_{y-x}$ is a $k$-core as well.  By Lemma~\ref{lem:DisplacementsInvertible}, however, $\lambda = (\lambda^-_{y-x})^+_{y-x}$, so $\lambda$ is a $k$-core.
\end{proof}

\subsection{Behavior of Invariants Under Displacement}

A consequence of this characterization is that $\mathcal{P}_k$ is a graded poset.  To see this, given a vector $\bC = (C_0 , C_1 , \ldots , C_{k-1})$ and a congruence class $a \in \ZZ/k\ZZ$, define the vector $\bC_a^- = (C_{0a}^- , C_{1a}^-, \ldots , C_{k-1a}^-)$ by
\begin{displaymath}
C_{ba}^- = \left\{ \begin{array}{ll}
C_a - 1 & \textrm{if $b = a-1$} \\
C_{a-1} & \textrm{if $b = a$} \\
C_b & \textrm{otherwise.}
\end{array} \right.
\end{displaymath}
The notation is justified by the following proposition.

\begin{proposition}
\label{prop:ColChange}
If $\lambda \in \mathcal{P}_k$ has an inside corner in $D_a$, then $\bC (\lambda_a^-) = \bC (\lambda)_a^-$ .
\end{proposition}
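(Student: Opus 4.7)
My plan is to verify the formula $\bC(\lambda_a^-)=\bC(\lambda)_a^-$ coordinate by coordinate, tracking how the column-ends of $\lambda$ migrate under downward displacement. Since $\lambda\in\mathcal{P}_k$, Proposition~\ref{Prop:Char} gives that both $\lambda$ and $\lambda^T$ satisfy $k$-descent, and by Lemma~\ref{lem:LeftmostCornerA} the tallest column of $\lambda$ ending in $D_a$ has height $C_a(\lambda)$ and terminates in an inside corner. Listing the inside corners of $\lambda$ in $D_a$ as $(x_1,y_1),\ldots,(x_s,y_s)$ with $x_1<\cdots<x_s$ (hence $y_1>\cdots>y_s$ and $y_1=C_a(\lambda)$), the displacement removes exactly these boxes, so column $x_i$ drops from a $D_a$-column-end at height $y_i$ to a $D_{a-1}$-column-end at height $y_i-1$, and every other column is unchanged.

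From this bookkeeping the identity $C_b(\lambda_a^-)=C_b(\lambda)$ for $b\neq a,a-1$ is immediate. For $b=a-1$, the new columns ending in $D_{a-1}$ are those that already ended there, together with the shortened columns $x_1,\ldots,x_s$ at heights $y_i-1$. Applying $k$-descent of $\lambda$ to the row-end $(x_1,y_1)\in D_a$ gives $C_{a-1}(\lambda)<C_a(\lambda)$, so $\max\bigl(C_{a-1}(\lambda),y_1-1\bigr)=C_a(\lambda)-1$, matching the formula.

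The case $b=a$ is the crux. For $C_a(\lambda_a^-)\leq C_{a-1}(\lambda)$: any column-end $(x,y)\in D_a$ of $\lambda$ that is not an inside corner has $(x+1,y)\in\lambda$, forcing column $x+1$ of $\lambda$ to have height exactly $y$, which produces a $D_{a-1}$-column-end of height $y$, so $y\leq C_{a-1}(\lambda)$. For the reverse, assuming $C_{a-1}(\lambda)=:y_*>0$, I would pick $x'$ so that column $x'$ of $\lambda$ has height $y_*$ with $(x',y_*)\in D_{a-1}$ and examine column $x'-1$. If that column has height $y_*$ as well, then $(x'-1,y_*)\in D_a$ is a column-end that is not an inside corner (since $(x',y_*)\in\lambda$), and so survives in $\lambda_a^-$ at the required height. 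The main obstacle is ruling out the scenario in which column $x'-1$ is strictly taller than $y_*$: in that case $(x'-1,y_*+1)$ would be a row-end of $\lambda$ in $D_{a+1}$, and $k$-descent of $\lambda$ would then force $C_a(\lambda)\leq y_*$, contradicting $C_a(\lambda)=y_1>y_*$. The degenerate possibility $x'=1$ is ruled out similarly, since it would make the height of column $1$ equal to $y_*$ while also at least $y_1>y_*$.
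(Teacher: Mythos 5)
Your proof is correct and follows essentially the same route as the paper's: the same coordinate-by-coordinate bookkeeping, the same derivation of $C_{a-1}(\lambda_a^-)=C_a(\lambda)-1$, and the same two inequalities establishing the crucial identity $C_a(\lambda_a^-)=C_{a-1}(\lambda)$, with the same witness (the column just to the left of the tallest $D_{a-1}$-ending column). The only cosmetic difference is that where the paper invokes Lemma~\ref{lem:FindColLeft} to rule out column $x'-1$ being taller than $y_*$, you apply $k$-descent of $\lambda$ directly to the row-end $(x'-1,y_*+1)\in D_{a+1}$; both are valid.
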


\begin{proof}
It is straightforward to see that, if $b \neq a,a-1$, then $C_b (\lambda_a^-) = C_b (\lambda)$.  By Lemma~\ref{lem:LeftmostCornerA}, the tallest column of $\lambda$ whose last box is in $D_a$ contains an inside corner, and by Lemma~\ref{lem:CongAFurtherLeft}, $C_{a-1}(\lambda) < C_a(\lambda)$.  It follows that $C_{a-1} (\lambda_a^-) = C_a (\lambda) - 1$. 

Now, suppose that $(x,y) \in \lambda \cap D_a$ is the last box of a column.  If $y > C_{a-1} (\lambda)$, then $(x,y)$ is an inside corner of $\lambda$, because $(x+1,y) \in D_{a-1}$ cannot be in $\lambda$ by definition.  It follows that $(x,y) \notin \lambda_a^-$, and thus that $C_a (\lambda_a^-) \leq C_{a-1} (\lambda)$.  We now show that equality holds.  If $C_{a-1} (\lambda) = 0$, then there is nothing to show.  Otherwise, suppose that column $x$ is the tallest column whose last box is in $D_{a-1}$.  By Lemma~\ref{lem:FindColLeft}, $(x,C_{a-1}(\lambda)+1)$ cannot be an outside corner of $\lambda$, hence $x > 1$ and $(x-1,C_{a-1}(\lambda)+1) \notin \lambda$.  It follows that $(x-1,C_{a-1}(\lambda)) \in D_a$ is the last box in its column.  Since $(x-1,C_{a-1}(\lambda))$ is not an inside corner, it is contained in $\lambda_a^-$, so $C_a (\lambda_a^-) \geq C_{a-1} (\lambda)$.  
\end{proof}

\begin{corollary}
\label{cor:PosetIsPoset}
The set $\mathcal{P}_k$ is a graded poset with rank function $\rho_k$.
\end{corollary}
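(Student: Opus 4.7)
The plan is to verify the definition of a graded poset by showing that every cover relation in $\mathcal{P}_k$ increases $\rho_k$ by exactly $1$, and then to run the standard induction starting from the unique minimum $\emptyset$, where $\rho_k(\emptyset) = 0$.

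First I would pin down the cover relations. By the definition of $\mathcal{P}_k$, $\lambda' \lessdot \lambda$ precisely when $\lambda = (\lambda')^+_a$ for some $a \in \ZZ/k\ZZ$ such that $\lambda'$ has at least one outside corner in $D_a$. Applying Proposition~\ref{Prop:Char} and Lemma~\ref{lem:DisplacementsInvertible}, this condition is equivalent to $\lambda' = \lambda_a^-$ with $\lambda$ having at least one inside corner in $D_a$. This reformulation is convenient because Proposition~\ref{prop:ColChange} directly computes $\bC(\lambda_a^-)$ in that situation.

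The key computation then follows immediately: by Proposition~\ref{prop:ColChange} and the definition of $\bC(\lambda)_a^-$,
\[
\rho_k(\lambda_a^-) = (C_a(\lambda) - 1) + C_{a-1}(\lambda) + \sum_{b \neq a,\, a-1} C_b(\lambda) = \rho_k(\lambda) - 1,
\]
so $\rho_k(\lambda) = \rho_k(\lambda') + 1$ for every cover relation $\lambda' \lessdot \lambda$. A routine induction on the length of a saturated chain from $\emptyset$ to $\lambda$ then shows that every such chain has length $\rho_k(\lambda)$, establishing that $\mathcal{P}_k$ is graded with rank function $\rho_k$. The only substantive step is the rank-change identity, but it is already packaged into Proposition~\ref{prop:ColChange}; the rest is bookkeeping, so I do not anticipate any serious obstacle.
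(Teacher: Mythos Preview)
Your proposal is correct and follows essentially the same approach as the paper: the paper's proof simply asserts that it suffices to show $\rho_k(\lambda) = \rho_k(\lambda_a^-) + 1$ whenever $\lambda$ has an inside corner in $D_a$, and derives this by summing the identity of Proposition~\ref{prop:ColChange} over all $b \in \ZZ/k\ZZ$, exactly as you do. Your write-up is just a more explicit unpacking of the same argument, including the characterization of cover relations and the induction that the paper leaves implicit; one small note is that your claimed description of the cover relations is only justified \emph{after} the rank-change identity is established, so logically that identity should come first.
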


\begin{proof}
Let $\lambda \in \mathcal{P}_k$, and suppose that $\lambda$ has an inside corner in $D_a$.  It suffices to show that 
\[
\rho_k (\lambda) = \rho_k (\lambda_a^-) + 1.
\]
This follows from Proposition~\ref{prop:ColChange} by summing over all $b \in \ZZ/k\ZZ$.
\end{proof}

\subsection{Saturated Tableaux}

Corollary~\ref{cor:PosetIsPoset} provides a natural interpretation for the function $\rho_k$.  As we shall see in Corollary~\ref{cor:kDispMin}, if $\lambda \in \mathcal{P}_k$, then $\rho_k (\lambda)$ is the minimal number of symbols in a $k$-uniform displacement tableau on $\lambda$.  Let $\mathscr{C}(\mathcal{P})$ denote the set of maximal chains in a poset $\mathcal{P}$.  Given a partition $\lambda \in \mathcal{P}_k$, we define a map
\[
\Phi_{\lambda} : {{[g]}\choose{\rho_k (\lambda)}} \times \mathscr{C}(\mathcal{P}_k (\lambda)) \to YT_k (\lambda)
\]
as follows.  Let 
\[
s_1 < s_2 < \cdots < s_{\rho_k (\lambda)}
\]
be the elements of $S \subseteq [g]$, and let
\[
\emptyset = \lambda_0 < \lambda_1 < \cdots < \lambda_{\rho_k (\lambda)} = \lambda
\]
be a maximal chain in $\mathcal{P}_k (\lambda)$.  Define the tableau $t = \Phi_{\lambda} (S, \vec{\lambda})$ by setting 
\[
t(x,y) = s_j \mbox{ if } (x,y) \in \lambda_j \smallsetminus \lambda_{j-1}.
\]
For each $j$, every symbol in $\lambda_{j-1}$ is smaller than $s_j$, so $t$ is a tableau.  Moreover, every box containing the symbol $s_j$ is in the same diagonal (mod $k$), so $t$ satisfies $k$-uniform displacement.  We say that a tableau $t$ on $\lambda$ is \emph{$k$-saturated} if it is in the image of $\Phi_{\lambda}$.  Note that every $k$-saturated tableau contains exactly $\rho_k (\lambda)$ distinct symbols.

\begin{theorem}
\label{Thm:Equidim}
Let $\lambda$ be a $k$-core, and let $t$ be a $k$-uniform displacement tableau on $\lambda$.  Then there exists a $k$-saturated tableau $t'$ on $\lambda$ such that:
\begin{enumerate}
\item  every symbol in $t'$ is a symbol in $t$, and
\item  if $t(x,y) = t'(x',y')$, then $y-x \equiv y'-x' \pmod{k}$.
\end{enumerate} 
\end{theorem}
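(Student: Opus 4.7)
The plan is to induct on the rank $\rho_k(\lambda)$ (equivalently, on $|\lambda|$, since the number of boxes is determined by the $k$-core structure via Corollary~\ref{cor:PosetIsPoset}). The base case $\lambda = \emptyset$ is vacuous. For the inductive step, my goal is to ``peel off'' the last step of a saturation chain for $\lambda$ and reduce to a smaller $k$-core.

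Let $s$ be the largest symbol appearing in $t$. Any box $(x,y)$ with $t(x,y) = s$ must be an inside corner of $\lambda$, since otherwise a strictly larger label would be forced immediately to its right or below. The $k$-uniform displacement hypothesis forces all such boxes to lie on a common diagonal $D_a$, so in particular $\lambda$ has an inside corner in $D_a$. Since $\lambda$ is a $k$-core, Proposition~\ref{Prop:Char} tells us both $\lambda$ and $\lambda^T$ satisfy $k$-descent; Lemma~\ref{Lem:RowColCondition} then implies $\lambda^-_a$ is again a $k$-core, and Lemma~\ref{lem:DisplacementsInvertible} gives $\lambda = (\lambda^-_a)^+_a$, so that the set $\lambda \smallsetminus \lambda^-_a$ consists precisely of the inside corners of $\lambda$ on $D_a$.

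Restrict $t$ to $\lambda^-_a$; this is still a $k$-uniform displacement tableau (both conditions are preserved under deleting boxes), and since every box labeled $s$ has been removed, the symbol $s$ no longer appears. Apply the inductive hypothesis to obtain a $k$-saturated tableau $t''$ on $\lambda^-_a$ with symbols drawn from those of $t|_{\lambda^-_a}$ and satisfying the diagonal-matching property. Define $t'$ on $\lambda$ by extending $t''$ with the value $s$ on every box of $\lambda \smallsetminus \lambda^-_a$. Since $s$ exceeds every symbol of $t''$ and the extended boxes are inside corners of $\lambda$, the tableau conditions for $t'$ are automatic. The saturation property follows from the fact that the maximal chain of $\mathcal{P}_k(\lambda^-_a)$ associated to $t''$, extended by the single upward displacement $\lambda^-_a \mapsto (\lambda^-_a)^+_a = \lambda$, is a maximal chain of $\mathcal{P}_k(\lambda)$, with $t'$ realizing the map $\Phi_\lambda$ on this chain together with the symbol set $\{\text{symbols of } t''\} \cup \{s\}$.

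To check conditions (1) and (2): every symbol of $t'$ is either $s$ (a symbol of $t$) or a symbol of $t''$, hence by induction a symbol of $t$. If $t(x,y) = s = t'(x',y')$, both boxes lie on $D_a$; if instead $t'(x',y')$ is a symbol of $t''$, that symbol also appears somewhere in $t|_{\lambda^-_a}$, so the inductive diagonal-match together with the $k$-uniform displacement of $t$ on all of $\lambda$ gives the required congruence. The main obstacle is justifying the reduction step: one must know that performing the \emph{full} downward displacement $\lambda \to \lambda^-_a$ (removing all inside corners on $D_a$, not merely those labeled $s$) lands us back in a $k$-core without destroying the ability to reconstruct $\lambda$; this is exactly what the combination of Proposition~\ref{Prop:Char} with Lemmas~\ref{lem:DisplacementsInvertible} and~\ref{Lem:RowColCondition} provides, and without the $k$-core hypothesis the argument would fail.
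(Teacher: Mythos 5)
Your proposal is correct and follows essentially the same route as the paper's proof: induct on $\rho_k(\lambda)$, observe that the largest symbol occupies inside corners on a single diagonal $D_a$, pass to the downward displacement $\lambda_a^-$ (which no longer contains that symbol), apply the inductive hypothesis, and extend the resulting saturated tableau by placing the largest symbol on all of $\lambda \smallsetminus \lambda_a^-$. Your verification of condition (2) via the invertibility $\lambda = (\lambda_a^-)_a^+$ and the chain-extension argument matches the paper's use of Corollary~\ref{cor:PosetIsPoset} and Lemma~\ref{lem:DisplacementsInvertible}, and is if anything slightly more explicit about why the congruence holds for symbols other than the largest one.
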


\begin{proof}
We prove this by induction on $\rho_k (\lambda)$.  The base case is when $\rho_k (\lambda) = 0$, in which case $\lambda$ is the empty partition, and the result is trivial.

For the inductive step, suppose that $h$ is the largest symbol in $t$.  Note that any box containing $h$ must be an inside corner of $\lambda$, and every such box is contained in the same diagonal $D_a$.  In particular, the symbol $h$ does not appear in the restriction $t \vert_{\lambda_a^-}$.  By induction, there exists a $k$-saturated tableau $t''$ on $\lambda_a^-$ such that every symbol in $t''$ is a symbol in $t \vert_{\lambda_a^-}$, and if $t(x,y) = t''(x',y')$, then $y-x \equiv y'-x' \pmod{k}$.

By Corollary~\ref{cor:PosetIsPoset}, $\rho_k (\lambda_a^-) = \rho_k (\lambda) - 1$, so the set $S$ of symbols in $t''$ has size $\rho_k (\lambda) -1$.  By definition, there is a maximal chain
\[
\emptyset = \lambda_0 < \lambda_1 < \cdots < \lambda_{\rho_k (\lambda)-1} = \lambda_a^-
\]
such that $t'' = \Phi_{\lambda_a^-} (S,\vec{\lambda})$.  Let $S' = S \cup \{ h \}$, let $\vec{\lambda}'$ be the chain obtained by appending $\lambda$ to the end of $\vec{\lambda}$, and let $t' = \Phi_{\lambda} (S', \vec{\lambda}')$.   In other words,
\begin{displaymath}
t'(x,y) = \left\{ \begin{array}{ll}
t''(x,y) & \textrm{if $(x,y) \in \lambda_a^-$} \\
h & \textrm{if $(x,y) \notin \lambda_a^-$.}
\end{array} \right.
\end{displaymath}

Clearly, every symbol in $t'$ is a symbol in $t$.  Since $h$ is larger than every symbol appearing in $t \vert_{\lambda_a^-}$, we see that $t'$ is a tableau.  Finally, since every box containing $h$ is in $D_a$, we see that if $t(x,y) = h$, then $y-x \equiv a \pmod{k}$.
\end{proof}

\begin{remark}
\label{Rmk:Dim}
Under the bijection between $k$-uniform displacement tableaux on $k$-cores and words in the affine symmetric group, Theorem~\ref{Thm:Equidim} is equivalent to the statement that every word is equivalent to a reduced word.
\end{remark}

\begin{example}
\label{Ex:Alg}
Given a $k$-uniform displacement tableau $t$ on $\lambda$, the proof of Theorem~\ref{Thm:Equidim} provides an explicit algorithm for producing the $k$-saturated tableau $t'$.  At each step, find the diagonal $D_a$ containing the largest symbol in $t$.  Replace every inside corner in $D_a$ with this symbol, then downward displace with respect to $a$, and iterate the procedure.

Figure~\ref{Fig:Replace} illustrates this procedure for a 3-uniform displacement tableau on $\lambda (\bmu)$, where $\bmu = (-3,-1,1)$.  The tableau on the left uses 8 symbols.  At each step, we highlight in gray the downward displacement of the previous partition in the sequence, replacing symbols as we go until we arrive at a tableau with $\rho_3 (\lambda(\bmu)) = 5$ symbols.

\begin{figure}[h]
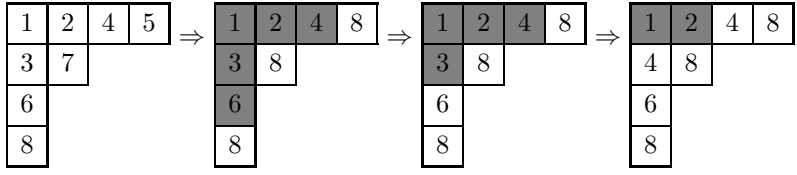

\begin{ytableau}
1 & 2 & 4 & 5\\
3 & 7\\
6\\
8\\
\end{ytableau}
$\Rightarrow$ 
\begin{ytableau}
*(gray)1 & *(gray)2 & *(gray)4 & 8\\
*(gray)3 & 8\\
*(gray)6\\
8\\
\end{ytableau} 
$\Rightarrow$ 
\begin{ytableau}
*(gray)1 & *(gray)2 & *(gray)4 & 8\\
*(gray)3 & 8\\
6\\
8\\
\end{ytableau}
$\Rightarrow$
\begin{ytableau}
*(gray)1 & *(gray)2 & 4 & 8\\
4 & 8\\
6\\
8\\
\end{ytableau}

\caption{Starting with the tableau on the left, we produce a 3-uniform displacement tableau with only 5 symbols.}
\label{Fig:Replace}
\end{figure}

\end{example}

\begin{corollary}
\label{cor:kDispMin}
Let $\lambda$ be a $k$-core.  The minimum number of symbols in a $k$-uniform displacement tableau on $\lambda$ is $\rho_k (\lambda)$.
\end{corollary}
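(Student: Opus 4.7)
The plan is to deduce this corollary directly from the two main results already established in this subsection: Theorem~\ref{Thm:Equidim} and Corollary~\ref{cor:PosetIsPoset}. The strategy splits naturally into proving an upper bound and a lower bound on the minimum.

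For the upper bound, I would exhibit a $k$-uniform displacement tableau on $\lambda$ with exactly $\rho_k(\lambda)$ symbols. Since $\mathcal{P}_k$ is a graded poset with rank function $\rho_k$ by Corollary~\ref{cor:PosetIsPoset}, every maximal chain in $\mathcal{P}_k(\lambda)$ has length $\rho_k(\lambda)$. Picking any such maximal chain together with any size-$\rho_k(\lambda)$ subset $S \subseteq [g]$ and feeding the pair into the map $\Phi_\lambda$ produces a $k$-saturated tableau on $\lambda$, which by definition has exactly $\rho_k(\lambda)$ distinct symbols.

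For the lower bound, I would take an arbitrary $k$-uniform displacement tableau $t$ on $\lambda$ and apply Theorem~\ref{Thm:Equidim} to produce a $k$-saturated tableau $t'$ on $\lambda$ all of whose symbols already appear in $t$. Since every $k$-saturated tableau contains exactly $\rho_k(\lambda)$ distinct symbols (this is immediate from the construction via $\Phi_\lambda$ and the fact that maximal chains in $\mathcal{P}_k(\lambda)$ have length $\rho_k(\lambda)$), the tableau $t$ must contain at least $\rho_k(\lambda)$ distinct symbols as well. Combining the two bounds gives the corollary.

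The hard work has already been done. The only potential subtlety is ensuring that the ``every $k$-saturated tableau has exactly $\rho_k(\lambda)$ symbols'' assertion is genuinely bundled into the definition of $k$-saturated and the grading of $\mathcal{P}_k$; both are in hand, so the proof becomes a short two-sentence deduction rather than a genuine argument.
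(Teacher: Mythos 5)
Your proposal is correct and follows the paper's own route: the lower bound via Theorem~\ref{Thm:Equidim} is exactly the published proof, and your explicit upper bound (producing a $k$-saturated tableau from a maximal chain via $\Phi_\lambda$) is the step the paper leaves implicit, having already noted that every $k$-saturated tableau has exactly $\rho_k(\lambda)$ symbols. No gaps; the argument is sound.
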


\begin{proof}
Let $t$ be a $k$-uniform displacement tableau on $\lambda$.  By Theorem~\ref{Thm:Equidim}, there exists a $k$-uniform displacement tableau $t'$ on $\lambda$ such that every symbol in $t'$ is a symbol in $t$, and $t'$ has exactly $\rho_k (\lambda)$ symbols.  It follows that $t$ has at least $\rho_k (\lambda)$ symbols.
\end{proof}

\section{Dimensions of Tropical Splitting Type Loci}
\label{Sec:Dim}

In this section, we compute the dimension of $\overline{W}^{\bmu} (\Gamma)$, proving Theorem~\ref{thm:TropEquiDim}.  In order to do this, we first apply the results of Section~\ref{Sec:Partitions} to $k$-staircases.

\begin{lemma}
Let $\bmu \in \ZZ^k$ be a splitting type, and let $c(\bmu) = - \sum_{i=1}^k \mu_i$.  Then every inside corner of $\lambda (\bmu)$ is in $D_{c(\bmu)}$.
\end{lemma}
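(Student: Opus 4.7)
The plan is to prove this by (i) classifying the inside corners of $\lambda(\bmu)$ as precisely the points of the form $(x_m(\bmu), y_m(\bmu))$ with both coordinates positive, and (ii) computing $y_m(\bmu) - x_m(\bmu) \pmod{k}$ directly from the definitions.

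For step (i), let $(x,y)$ be an inside corner. From $(x,y) \in \lambda(\bmu)$ there exists some $m$ with $x \leq x_m(\bmu)$ and $y \leq y_m(\bmu)$. The condition $(x,y+1) \notin \lambda(\bmu)$ forces, for every $m'$ with $x \leq x_{m'}(\bmu)$, the inequality $y+1 > y_{m'}(\bmu)$, i.e. $y \geq y_{m'}(\bmu)$. Applying this to our chosen $m$ gives $y = y_m(\bmu)$. A symmetric argument using $(x+1,y) \notin \lambda(\bmu)$, combined with $y = y_m(\bmu)$ (so the condition $y \leq y_{m'}(\bmu)$ trivially includes $m$), forces $x = x_m(\bmu)$. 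This is the one slightly delicate bookkeeping step, but it is quickly dispatched using the monotonicity that $x_m$ is non-decreasing and $y_m$ is non-increasing in $m$.

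For step (ii), I will observe the pointwise identity
\[
\max\{0, -\mu_i - m - 1\} - \max\{0, \mu_i + m + 1\} = -(\mu_i + m + 1),
\]
which holds for every integer $\mu_i + m + 1$ (exactly one of the two $\max$'s is zero, and the other equals $\pm(\mu_i+m+1)$). Summing over $i = 1, \ldots, k$ yields
\[
y_m(\bmu) - x_m(\bmu) = -\sum_{i=1}^k (\mu_i + m + 1) = c(\bmu) - k(m+1).
\]
In particular $y_m(\bmu) - x_m(\bmu) \equiv c(\bmu) \pmod{k}$, so the inside corner $(x_m(\bmu), y_m(\bmu))$ lies on the diagonal $D_{c(\bmu)}$.

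The main obstacle, such as it is, is the bookkeeping in step (i) to confirm that every inside corner actually arises from a single index $m$ as $(x_m(\bmu), y_m(\bmu))$, rather than from the union of two different rectangles in some degenerate way; once that is in hand the congruence computation in step (ii) is immediate.
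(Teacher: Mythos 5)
Your proof is correct and follows essentially the same route as the paper: identify the inside corners as the boxes $(x_m(\bmu), y_m(\bmu))$ and then compute $y_m(\bmu) - x_m(\bmu) = -\sum_{i=1}^k(\mu_i + m + 1) \equiv c(\bmu) \pmod{k}$ via the identity $\max\{0,-n\} - \max\{0,n\} = -n$. The only difference is that you verify the classification of inside corners in step (i), which the paper simply asserts earlier (in the discussion following Definition~\ref{def:SplitTypePartition}) and then recalls in its proof; your verification is sound.
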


\begin{proof}
Recall that the inside corners of $\lambda (\bmu)$ are the boxes $(x_m (\bmu), y_m (\bmu))$.  By definition, we have
\begin{align*}
y_m (\bmu) - x_m (\bmu) &= \sum_{i=1}^k \Big( \max \{ 0, -\mu_i - m - 1 \} - \max \{0,  \mu_i + m + 1 \} \Big) \\
&= \sum_{i=1}^k \Big( \max \{ 0, -\mu_i - m - 1 \} + \min \{0, - \mu_i - m - 1 \} \Big) &= \sum_{i=1}^k ( -\mu_i - m - 1 ) \\
& &\equiv - \sum_{i=1}^k \mu_i \pmod{k}  .
\end{align*}
\end{proof}

If $\lambda$ is a $k$-staircase, then there is a simple expression for the invariants $C_a (\lambda)$.

\begin{lemma}
\label{lem:SplitTypeCol}
Let $\bmu \in \ZZ^k$ be a splitting type.  Then
\[
C_{c(\bmu)+i} (\lambda (\bmu)) = y_{-\mu_{k-i}} (\bmu) = \sum_{j=1}^{k-1-i} \max \{ 0, \mu_{k-i} - \mu_j - 1 \} \mbox{ for all  } 0 \leq i \leq k-1 .
\]
\end{lemma}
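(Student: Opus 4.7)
The plan is to analyze $\lambda(\bmu)$ column-by-column via its decomposition as a union of the rectangles $\lambda_m(\bmu)$. The key geometric observation is that for each integer $m$ with $\alpha_m(\bmu) > 0$, the columns of $\lambda(\bmu)$ indexed by $x_{m-1}(\bmu)+1, \ldots, x_m(\bmu)$ all have height exactly $y_m(\bmu)$: the rectangles $\lambda_{m'}$ with $m' < m$ contain only $x_{m'} \leq x_{m-1}$ columns and so miss this strip, while those with $m' > m$ have height $y_{m'} \leq y_m$. Consequently, the last boxes of these $\alpha_m(\bmu)$ columns occupy the consecutive diagonals $y_m - x_m, y_m - x_m + 1, \ldots, y_m - x_{m-1} - 1 \pmod{k}$, which by the preceding lemma and the bound $\alpha_m \leq k$ are precisely $c(\bmu), c(\bmu)+1, \ldots, c(\bmu) + \alpha_m(\bmu) - 1 \pmod{k}$.

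Combining this strip picture with the monotonicities $\alpha_m \leq \alpha_{m+1}$ (noted in the excerpt) and $y_m \geq y_{m+1}$ (immediate from the definition), I would conclude that
\[
C_{c(\bmu)+i}(\lambda(\bmu)) = y_{m^*}(\bmu), \quad m^* := \min\{ m : \alpha_m(\bmu) \geq i+1 \},
\]
with the usual convention $C_a = 0$ when no column ends in $D_a$, handling the case $y_{m^*} = 0$. To identify $m^*$, I would compute telescopically $\alpha_m(\bmu) = |\{ j : \mu_j \geq -m \}|$, so $\alpha_m \geq i+1$ if and only if $\mu_{k-i} \geq -m$; this remains valid when some $\mu_j$ coincide, since $|\{j : \mu_j > \mu_{k-i}\}| \leq i$ even in the presence of ties. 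Thus $m^* = -\mu_{k-i}$, giving the first equality.

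For the second equality, I would simply expand
\[
y_{-\mu_{k-i}}(\bmu) = \sum_{j=1}^{k} \max\{0, \mu_{k-i} - \mu_j - 1\},
\]
and discard the terms with $j \geq k-i$, for which $\mu_j \geq \mu_{k-i}$ makes the summand vanish. The main obstacle I anticipate is careful bookkeeping in the degenerate cases where strips are empty ($\alpha_m = 0$ or $y_m = 0$), but these reduce to the verification that when $y_{-\mu_{k-i}}(\bmu) = 0$, no column of $\lambda(\bmu)$ terminates in $D_{c(\bmu)+i}$: strips with $m < m^*$ fail to cover that diagonal, while strips with $m \geq m^*$ have $y_m \leq y_{m^*} = 0$ and are therefore empty.
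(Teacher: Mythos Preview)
Your proof is correct and takes a genuinely different route from the paper's. Both arguments begin with the same geometric observation---that the columns indexed $x_{m-1}(\bmu)+1,\ldots,x_m(\bmu)$ have height $y_m(\bmu)$ and hence terminate in the consecutive diagonals $D_{c(\bmu)},\ldots,D_{c(\bmu)+\alpha_m(\bmu)-1}$---but they diverge from there. The paper uses this only to identify which $C_{c(\bmu)+i}$ vanish, and then proves the formula by induction on the number of rows, invoking Lemma~\ref{Lem:DeleteRow} (that deleting the top row of $\lambda(\bmu)$ yields $\lambda(\bmu^+)$) together with the elementary relation $C_{a+1}(\lambda(\bmu^+)) = C_a(\lambda(\bmu)) - 1$ when the latter is nonzero. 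You instead push the strip analysis to its conclusion: your telescoping computation $\alpha_m(\bmu) = |\{j : \mu_j \geq -m\}|$ directly pins down $m^* = -\mu_{k-i}$ as the strip contributing the tallest column in $D_{c(\bmu)+i}$.

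Your approach is more elementary and self-contained, bypassing the auxiliary machinery of $\bmu^+$ entirely and making the formula $m^* = -\mu_{k-i}$ transparent. The paper's inductive approach, on the other hand, fits more naturally into its broader framework, since the operations $\bmu^\pm$ and Lemma~\ref{Lem:DeleteRow} are used repeatedly elsewhere (e.g., in the proof of Proposition~\ref{prop:SplitTypeiskDisp}). Your treatment of the degenerate cases $y_{m^*} = 0$ is correct as stated.
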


\begin{proof}
We first identify the congruence classes $a \in \ZZ/k\ZZ$ such that $C_a (\lambda (\bmu)) = 0$.  Let $(x,y)$ be the last box in a column of $\lambda (\bmu)$.  Then there exists an integer $m$ such that $y = y_m (\bmu)$ and $x_{m-1} (\bmu) < x \leq x_m (\bmu)$.  Since $(x_m (\bmu), y_m (\bmu)) \in D_{c(\bmu)}$, we see that $(x,y) \in D_{c(\bmu) + i}$ for some $i$ in the range $0 \leq i < \alpha_m (\bmu)$.  Since $\alpha_m (\bmu) \leq \alpha_{m+1} (\bmu)$ for all $m$, we may reduce to the case where $m = -2 - \mu_1$ is maximal.  We see that $C_{c(\bmu)+i} (\lambda (\bmu))$ is nonzero for $i$ in the range $0 \leq i < \alpha_{-2-\mu_1}(\bmu)$ and zero for $i$ in the range $\alpha_{-2-\mu_1}(\bmu) \leq i \leq k-1$.  Note that $\alpha_{-2-\mu_1}(\bmu)$ is the minimal index $j$ such that $\mu_{j+1} \geq \mu_1 + 2$.

To establish the formula when $C_a (\lambda (\bmu))$ is nonzero, we proceed by induction on the number of rows of $\lambda(\bmu)$. The base case is when $\mu_j - \mu_i \leq 1$ for all $i<j$, in which case $\lambda(\bmu)$ is the empty partition.  In this case, $C_i (\lambda (\bmu)) = y_{-\mu_{k-i}} (\bmu) = 0$ for all $0 \leq i \leq k-1$.

For the inductive step, recall from Lemma~\ref{Lem:DeleteRow} that $\lambda(\bmu^+)$ is the partition obtained by deleting the first row of $\lambda(\bmu)$.  It follows that
\begin{displaymath}
C_{a+1} (\lambda(\bmu^+)) = \left\{ \begin{array}{ll}
C_a (\lambda(\bmu)) - 1 & \textrm{if $C_a (\lambda(\bmu)) \neq 0$} \\
0 & \textrm{if $C_a (\lambda(\bmu)) = 0$.}
\end{array} \right.
\end{displaymath}
Note that $c (\bmu^+) = c(\bmu) + 1$.  If $\mu_{k-i} \leq \mu_1 + 1$, then $C_{c(\bmu^+)+i} (\lambda(\bmu^+)) = y_{-\mu^+_{k-i}} (\bmu^+) = 0$.  By induction, if $\mu_{k-i} \geq \mu_1 + 2$, then
\[
C_{c(\bmu^+)+i} (\lambda(\bmu^+)) = \sum_{j=1}^{k-1-i} \max \{ 0, \mu_{k-i} - \mu^+_j - 1 \} = \sum_{j=1}^{k-1-i} \max \{ 0, \mu_{k-i} - \mu_j - 1 \} - 1,
\]
and the result follows.
\end{proof}

\begin{corollary}
\label{cor:SplitTypeMin}
Let $\bmu \in \ZZ^k$ be a splitting type.  Then $\rho_k (\lambda (\bmu)) = \vert \bmu \vert$.
\end{corollary}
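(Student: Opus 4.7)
The plan is to derive this directly from Lemma~\ref{lem:SplitTypeCol}, which has already done essentially all of the work. By definition,
\[
\rho_k(\lambda(\bmu)) = \sum_{a \in \ZZ/k\ZZ} C_a(\lambda(\bmu)).
\]
Since the residues $c(\bmu)+i$ for $0 \leq i \leq k-1$ run over all of $\ZZ/k\ZZ$, I can rewrite this sum as
\[
\rho_k(\lambda(\bmu)) = \sum_{i=0}^{k-1} C_{c(\bmu)+i}(\lambda(\bmu)) = \sum_{i=0}^{k-1} \sum_{j=1}^{k-1-i} \max\{0,\mu_{k-i} - \mu_j - 1\},
\]
where the second equality is precisely the formula established in Lemma~\ref{lem:SplitTypeCol}.

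The remaining step is a bookkeeping reindexing. I would substitute $\ell = k-i$, so that $\ell$ ranges over $1, 2, \ldots, k$ while the inner index $j$ ranges over $1, \ldots, \ell - 1$. This converts the double sum into
\[
\sum_{\ell=1}^{k} \sum_{j=1}^{\ell-1} \max\{0, \mu_\ell - \mu_j - 1\} = \sum_{j < \ell} \max\{0, \mu_\ell - \mu_j - 1\},
\]
which is exactly the definition of $|\bmu|$.

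There is no real obstacle here, as the content is entirely in Lemma~\ref{lem:SplitTypeCol}; once that formula is available, Corollary~\ref{cor:SplitTypeMin} is a one-line consequence obtained by summing over congruence classes and relabeling the index. The only thing to be careful about is making sure the reindexing correctly pairs each ordered pair $(j,\ell)$ with $j < \ell$ exactly once, which follows from the bijection $i \leftrightarrow \ell = k-i$ between $\{0,\ldots,k-1\}$ and $\{1,\ldots,k\}$.
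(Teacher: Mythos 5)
Your proposal is correct and is essentially identical to the paper's proof: both sum the formula of Lemma~\ref{lem:SplitTypeCol} over all $k$ congruence classes and reindex the resulting double sum to recover $\sum_{j<\ell} \max\{0,\mu_\ell-\mu_j-1\} = \vert\bmu\vert$. The reindexing $\ell = k-i$ is carried out correctly, so there is nothing to add.
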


\begin{proof}
By Lemma~\ref{lem:SplitTypeCol}, we have
\begin{align*}
\rho_k (\lambda (\bmu)) &= \sum_{i=0}^{k-1} C_{c(\bmu)+i} (\lambda (\bmu)) \\
&= \sum_{i=0}^{k-1} \sum_{j=1}^{k-1-i} \max \{ 0, \mu_{k-i} - \mu_j - 1 \} \\
&= \sum_{j<i} \max \{ 0, \mu_i - \mu_j -1 \} = \vert \bmu \vert.
\end{align*}
\end{proof}

In order to use the results of Section~\ref{Sec:Partitions}, we must show that $k$-staircases are in $\mathcal{P}_k$.

\begin{proposition}
\label{prop:SplitTypeiskDisp}
Every $k$-staircase is a $k$-core.
\end{proposition}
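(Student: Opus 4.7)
The plan is to invoke Proposition~\ref{Prop:Char}, which reduces the statement to showing that both $\lambda(\bmu)$ and $\lambda(\bmu)^T$ satisfy $k$-descent. Since $\lambda(\bmu)^T = \lambda(\bmu^T)$ is itself a $k$-staircase by Lemma~\ref{lem:SerreDualPartition}, it suffices to check $k$-descent for every $k$-staircase $\lambda(\bmu)$.

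A row-ending box of $\lambda(\bmu)$ --- meaning a point $(x,y) \in \lambda(\bmu)$ with $(x+1,y) \notin \lambda(\bmu)$ --- must lie on the right edge of one of the rectangles $\lambda_m(\bmu)$, so it has the form $(x_m(\bmu), y)$ with $y_{m+1}(\bmu) < y \leq y_m(\bmu)$. Writing $s := y_m(\bmu) - y$, so that $0 \leq s \leq k - \alpha_{m+1}(\bmu) - 1$, and using that the inside corner $(x_m(\bmu), y_m(\bmu))$ lies in $D_{c(\bmu)}$ by the lemma preceding Lemma~\ref{lem:SplitTypeCol}, such a row end belongs to the diagonal $D_a$ with $a \equiv c(\bmu) - s \pmod{k}$. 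The $k$-descent condition therefore reduces to the inequality $C_{c(\bmu) - s - 1}(\lambda(\bmu)) < y$.

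Next, I would invoke Lemma~\ref{lem:SplitTypeCol} after reducing $c(\bmu) - s - 1$ into the range prescribed by that lemma; this identifies $C_{c(\bmu) - s - 1}(\lambda(\bmu)) = y_{-\mu_{s+1}}(\bmu)$. The key claim becomes
\[
y_m(\bmu) - y_{-\mu_{s+1}}(\bmu) \geq s + 1.
\]
To prove it, let $I := \{i : \mu_i \leq -m - 2\}$, which has size $k - \alpha_{m+1}(\bmu) \geq s+1$ and contains $s+1$. Expanding the defining sums for $y_m(\bmu)$ and $y_{-\mu_{s+1}}(\bmu)$ and comparing index by index shows that indices $i \notin I$ contribute $0$, while each $i \in I$ contributes at least $1$ to the difference (splitting into subcases $\mu_i < \mu_{s+1}$ versus $\mu_i \geq \mu_{s+1}$). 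Summing gives that the difference is at least $|I| \geq s+1$, as required.

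The main obstacle is the bookkeeping in translating among three indexings --- the rectangle index $m$, the offset $s$ along a vertical edge, and the residue class of the diagonal modulo $k$ --- and in verifying that Lemma~\ref{lem:SplitTypeCol} applies to the correct $i \in [0, k-1]$. Once this translation is set up, the final inequality reduces to the crude, index-by-index positivity estimate above, which already suffices because $|I|$ is large enough.
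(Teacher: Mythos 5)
Your proposal is correct and follows essentially the same route as the paper: reduce to $k$-descent for $\lambda(\bmu)$ via Proposition~\ref{Prop:Char} and Lemma~\ref{lem:SerreDualPartition}, locate the row-ending boxes on the right edges of the rectangles $\lambda_m(\bmu)$, and identify $C_{a-1}(\lambda(\bmu))$ as $y_{-\mu_{s+1}}(\bmu)$ via Lemma~\ref{lem:SplitTypeCol}. The only difference is the last step: the paper observes $-\mu_{s+1} \geq m+2$ and uses monotonicity of $y_{m'}(\bmu)$ in $m'$ to get $y_{-\mu_{s+1}}(\bmu) \leq y_{m+1}(\bmu) < y$, whereas you prove the slightly stronger bound $y_m(\bmu) - y_{-\mu_{s+1}}(\bmu) \geq s+1$ by a term-by-term comparison; both are valid.
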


\begin{proof}
Let $\bmu \in \ZZ^k$ be a splitting type.  By Proposition~\ref{Prop:Char}, we must show that $\lambda (\bmu)$ and $\lambda(\bmu)^T$ satisfy $k$-descent.  By Lemma~\ref{lem:SerreDualPartition}, it suffices to show that $\lambda (\bmu)$ satisfies $k$-descent.  Let $(x,y) \in \lambda(\bmu) \cap D_a$ and suppose that $(x+1,y) \notin \lambda(\bmu)$.  We will show that $C_{a-1} (\lambda (\bmu)) < y$.  By assumption, there is an integer $m$ such that $x = x_m (\bmu)$ and $y_{m+1} (\bmu) < y \leq y_m (\bmu)$.  Since $(x_{m+1} (\bmu), y_{m+1} (\bmu)) \in D_{c(\bmu)}$, we see that $(x,y) \in D_{c(\bmu) + i}$ for some $i$ in the range $\alpha_{m+1} (\bmu) < i \leq k$.  By Lemma~\ref{lem:SplitTypeCol}, we have
\[
C_{c(\bmu)-i-1} (\lambda (\bmu)) = y_{-\mu_{k-i+1}} (\bmu) . 
\]
If $m+1 \geq -\mu_{k-i+1} (\bmu)$, then $\alpha_{m+1} (\bmu) \geq i$, a contradiction.  It follows that 
\[
y_{-\mu_{k-i+1}} (\bmu) < y_{m+1} (\bmu) < y.
\]
\end{proof}

We now prove the main theorem.

\begin{theorem}
\label{thm:UnionOverPhi}
Let $\Gamma$ be a $k$-gonal chain of loops of genus $g$, and let $\bmu \in \ZZ^k$ be a splitting type.  Then
\[
\overline{W}^{\bmu} (\Gamma) = \bigcup \TT (t) ,
\]
where the union is over all $k$-saturated tableaux on $\lambda (\bmu)$ with alphabet $[g]$.
\end{theorem}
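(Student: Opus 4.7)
The plan is to reduce the statement to a combination of Theorem~\ref{thm:TropClassify}, Theorem~\ref{Thm:Equidim}, and Lemma~\ref{lem:contain}. Since every $k$-saturated tableau is by construction a $k$-uniform displacement tableau, the containment
\[
\bigcup_{t \text{ $k$-saturated}} \TT(t) \;\subseteq\; \bigcup_{t \text{ $k$-uniform disp.}} \TT(t) \;=\; \overline{W}^{\bmu}(\Gamma)
\]
is immediate from Theorem~\ref{thm:TropClassify}. So the content is the reverse containment.

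For the reverse containment, I would start with a divisor class $D \in \overline{W}^{\bmu}(\Gamma)$. By Theorem~\ref{thm:TropClassify}, there exists a $k$-uniform displacement tableau $t$ on $\lambda(\bmu)$ with alphabet $[g]$ such that $D \in \TT(t)$. Next, I would invoke Proposition~\ref{prop:SplitTypeiskDisp}, which tells us that $\lambda(\bmu)$ is a $k$-core, so that the combinatorial machinery of Section~\ref{Sec:Partitions} applies to this partition. Applying Theorem~\ref{Thm:Equidim} to $t$ produces a $k$-saturated tableau $t'$ on $\lambda(\bmu)$ whose symbols all come from $t$ and which agrees with $t$ modulo $k$ on shared symbols. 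Because the symbols of $t'$ form a subset of those of $t$, the tableau $t'$ also uses alphabet $[g]$.

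Finally, I would invoke Lemma~\ref{lem:contain}: the two conditions produced by Theorem~\ref{Thm:Equidim} (every symbol in $t'$ appears in $t$, and shared symbols sit in congruent diagonals mod $k$) are exactly the hypotheses of that lemma, so $\TT(t) \subseteq \TT(t')$. Hence $D \in \TT(t')$, giving the desired containment.

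I do not expect any serious obstacle: all the substantive work has been done in the combinatorial sections, and the proof is essentially a two-line assembly. The only subtlety to state carefully is that the alphabet of $t'$ remains $[g]$, which is automatic since $\Phi_{\lambda(\bmu)}$ chooses the symbols of $t'$ from among those appearing in $t$.
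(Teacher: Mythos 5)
Your proposal is correct and follows exactly the paper's own argument: reduce to Theorem~\ref{thm:TropClassify}, use Proposition~\ref{prop:SplitTypeiskDisp} to see that $\lambda(\bmu)$ is a $k$-core, apply Theorem~\ref{Thm:Equidim} to replace an arbitrary $k$-uniform displacement tableau by a $k$-saturated one, and conclude with Lemma~\ref{lem:contain}. No gaps; the remark about the alphabet of $t'$ being contained in $[g]$ is a fine (if minor) point of care.
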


\begin{proof}
Let $t$ be a $k$-uniform displacement tableau on $\lambda (\bmu)$.  By Theorem~\ref{thm:TropClassify}, it suffices to show that there is a $k$-saturated tableau $t'$ on $\lambda (\bmu)$ such that $\TT (t) \subseteq \TT (t')$.  By Proposition~\ref{prop:SplitTypeiskDisp}, $\lambda (\bmu)$ is a $k$-core.  Thus, by Theorem~\ref{Thm:Equidim}, there is a $k$-saturated tableau $t'$ such that every symbol in $t'$ is a symbol in $t$ and, if $t(x,y) = t(x',y')$, then $y-x \equiv y'-x' \pmod{k}$.  By Lemma~\ref{lem:contain}, we have $\TT (t) \subseteq \TT (t')$.
\end{proof}

\begin{proof}[Proof of Theorem~\ref{thm:TropEquiDim}]
Recall that the codimension of $\TT (t)$ is equal to the number of symbols in $t$.  The result then follows from Theorem~\ref{thm:UnionOverPhi} because every $k$-saturated tableau on $\lambda$ contains exactly $\rho_k (\lambda)$ symbols, and by Corollary~\ref{cor:SplitTypeMin}, $\rho_k (\lambda(\bmu)) = \vert \bmu \vert$. 
\end{proof}

We now explain the connection between the tropical geometry and classical algebraic geometry.  The following has become a standard argument in tropical geometry, for instance in \cite{CDPR, PfluegerkGonal, JensenRanganthan, CPJ}.  Recall that, if $\overline{W}^{\bmu} (C)$ is nonempty, then $\dim \overline{W}^{\bmu} (C) \geq g - \vert \bmu \vert$.  We show the reverse inequality.

\begin{proof}[Proof of Theorem~\ref{Thm:MainThm}]
By \cite[Lemma~2.4]{PfluegerkGonal}, there exists a curve $C$ of genus $g$ and gonality $k$ over a nonarchimedean field $K$ with skeleton $\Gamma$.  By Proposition~\ref{prop:specialization}, we have
\[
\Trop \left( \overline{W}^{\bmu} (C) \right) \subseteq \overline{W}^{\bmu} (\Gamma) .
\]
By \cite[Theorem~6.9]{Gubler07}, we have
\[
\dim \overline{W}^{\bmu} (C) = \dim \Trop \left( \overline{W}^{\bmu} (C) \right) \leq \dim \overline{W}^{\bmu} (\Gamma) = g - \vert \bmu \vert ,
\]
where the last equality comes from Theorem~\ref{thm:TropEquiDim}.
\end{proof}

\section{Connectedness of Tropical Splitting Type Loci}
\label{Sec:Connect}

In this section, we prove Theorem~\ref{thm:connect}, which says that $\overline{W}^{\bmu} (\Gamma)$ is connected in codimension one.  We borrow the ideas and terminology from \cite[Section~4.2]{CLRW}.

Let $t$ be a $k$-uniform displacement tableau, let $a$ be a symbol that is not in $t$, and let $b$ be either the smallest symbol in $t$ that is greater than $a$ or the largest symbol in $t$ that is smaller than $a$.  If we take a proper subset of the boxes containing $b$ and replace them with $a$, then we obtain a $k$-uniform displacement tableau $t'$, with $\TT(t') \subset \TT(t)$ and $\dim \TT(t') = \dim \TT(t) -1$.  If we instead replace every instance of the symbol $b$ in $t$ with the symbol $a$, then we obtain a $k$-uniform displacement tableau $t'$, with $\dim \TT(t') = \dim \TT(t)$, such that $\TT(t)$ and $\TT(t')$ intersect in codimension one.  This procedure is called \emph{swapping} in $a$ for $b$.

Given a symbol $b$ in $t$, we obtain a $k$-uniform displacement tableau $t'$ without the symbol $b$, by iterating the procedure above.  If there is a symbol $a < b$ that is not in $t$, then the resulting tableau can be described explicitly:
\begin{displaymath}
t'(x,y) = \left\{ \begin{array}{ll}
t(x,y)-1 & \textrm{if $a < t(x,y) \leq b$} \\
t(x,y) & \textrm{otherwise.}
\end{array} \right.
\end{displaymath}
If there is a symbol $a > b$ that is not in $t$, then $t'$ is obtained instead by increasing by 1 every symbol in $t$ between $b$ and $a$.  Because $t'$ is obtained by a sequence of swaps, we see that there is a chain of tori from $\TT(t)$ to $\TT(t')$, such that each consecutive pair of tori in the chain intersect in codimension one.  This procedure is called \emph{cycling} out $b$.

\begin{proof}[Proof of Theorem~\ref{thm:connect}]
Let $t, t'$ be $k$-saturated tableaux on $\lambda (\bmu)$.  By Theorem~\ref{thm:UnionOverPhi}, it suffices to construct a sequence
\[
t = t_0 , t_1 , \ldots , t_m = t'
\]
of $k$-saturated tableaux, where $\TT (t_i)$ and $\TT(t_{i+1})$ intersect in codimension one for all $i$.  Both $t$ and $t'$ contain precisely $\vert \bmu \vert$ symbols.  By cycling out all symbols greater than $\vert \bmu \vert$, we may assume that the symbols in $t$ and $t'$ are precisely those in $[ \vert \bmu \vert ]$.  In other words, there exist maximal chains
\begin{align*}
\emptyset &= \lambda_0 < \lambda_1 < \cdots < \lambda_{\vert \bmu \vert} = \lambda(\bmu),\\
\emptyset &= \lambda'_0 < \lambda'_1 < \cdots < \lambda'_{\vert \bmu \vert} = \lambda(\bmu)
\end{align*}
such that $t = \Phi([ \vert \bmu \vert ], \vec{\lambda})$ and $t' = \Phi([ \vert \bmu \vert ], \vec{\lambda}')$.  If $\vec{\lambda}$ and $\vec{\lambda}'$ coincide, then $t = t'$, and we are done.

We prove the remaining cases by induction, having just completed the base case.  Let $j$ be the largest symbol such that $\lambda_{j-1} \neq \lambda'_{j-1}$.  Equivalently, the symbols $j+1, \ldots , \vert \bmu \vert$ appear in the same set of boxes of $t$ and $t'$.  We will construct a sequence
\[
t = t'_0 , t'_1 , \ldots , t'_n = t''
\]
of $k$-saturated tableaux, where $\TT (t'_i)$ and $\TT(t'_{i+1})$ intersect in codimension one for all $i$, and where each of the symbols $j, \ldots , \vert \bmu \vert$ appears in the same set of boxes of $t'$ and $t''$.

Since $g > \vert \bmu \vert$, either $g = j+1$ or there exists a symbol in $[g]$ that is greater than $j+1$.  We let $\widehat{t}$ be the tableau obtained by cycling $j+1$ out of $t$.  In other words,
\begin{displaymath}
\widehat{t}(x,y) = \left\{ \begin{array}{ll}
t(x,y) & \textrm{if $t(x,y) \leq j$} \\
t(x,y)+1 & \textrm{if $t(x,y) > j$.}
\end{array} \right.
\end{displaymath}
We define
\begin{displaymath}
\widetilde{t}(x,y) = \left\{ \begin{array}{ll}
j+1 & \textrm{if $(x,y) \in \lambda'_j \smallsetminus \lambda'_{j-1}$} \\
\widehat{t}(x,y) & \textrm{otherwise.} \\
\end{array} \right.
\end{displaymath}
To see that $\widetilde{t}$ is a tableau, note that
\[
\lambda_j = \lambda'_j = \{ (x,y) \in \lambda (\bmu) \mbox{ } \vert \mbox { } \widehat{t} (x,y) \leq j \},
\]
and $\widehat{t}$ does not contain the symbol $j+1$, so every box in $\lambda (\bmu) \smallsetminus \lambda'_j$ contains a symbol that is greater than $j+1$, and every box in $\lambda'_{j-1}$ contains a symbol that is smaller than $j+1$.  Note that $\widetilde{t}$ contains one more symbol than $\widehat{t}$, so $\TT (\widetilde{t}) \subset \TT (\widehat{t})$ has codimension 1.  Applying the procedure of Example~\ref{Ex:Alg}, we obtain a $k$-saturated tableau $\widetilde{t}'$ such that $\TT (\widetilde{t}) \subset \TT (\widetilde{t'})$.  Since $i+1$ is the largest symbol in $\lambda'_i$ for all $i \geq j$, we see that $\widetilde{t}' (x,y) = \widetilde{t} (x,y)$ for all $(x,y) \in \lambda (\bmu) \smallsetminus \lambda'_{j-1}$.  Finally, we let $t''$ be the tableau obtained by cycling out all symbols greater than $\vert \bmu \vert$ from $\widetilde{t}'$.  By construction, each of the symbols $j, \ldots , \vert \bmu \vert$ appears in the same set of boxes of $t'$ and $t''$.
\end{proof}

\begin{remark}
\label{Rmk:Connect}
Under the bijection with words in the affine symmetric group, Theorem~\ref{thm:connect} is equivalent to the statement that any two reduced expressions for the same word can be connected via a sequence of ``braid moves'' (see \cite[Theorem~3.3.1]{BjornerBrenti}).
\end{remark}

\begin{example}
Figure~\ref{Fig:Connect} illustrates the procedure in the proof of Theorem~\ref{thm:connect}.  The two tableaux $t, t'$ on the ends correspond to two maximal-dimension tori in $\overline{W}^{\bmu} (\Gamma)$, where $\bmu = (-3,-1,1)$.  If $g \geq 6$, we construct a chain of tori from $\TT(t)$ to $\TT(t')$ in this tropical splitting type locus, where each torus intersects the preceding torus in codimension one.  The largest symbol where $t$ and $t'$ disagree is 4.  We therefore begin by cycling out 5, to obtain the second tableau in the chain.  We then place a 5 in each box where a 4 appears in $t'$, to obtain the third tableau in the chain, using all 6 symbols.  Applying the procedure of Example~\ref{Ex:Alg}, we obtain the fourth tableau.  Finally, by cycling out 6, we arrive at $t'$.

\begin{figure}[h]
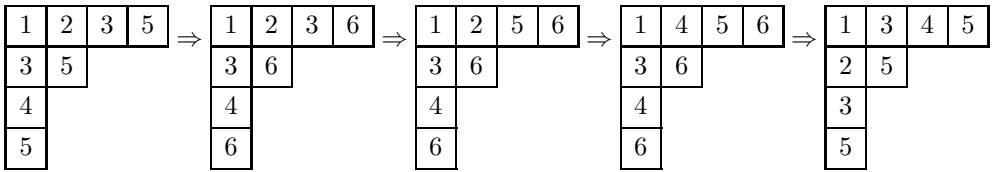

$\begin{ytableau}
1 & 2 & 3 & 5\\
3 & 5\\
4\\
5\\
\end{ytableau}\Rightarrow
\begin{ytableau}
1 & 2 & 3 & 6\\
3 & 6\\
4\\
6\\
\end{ytableau}\Rightarrow
\begin{ytableau}
1 & 2 & 5 & 6\\
3 & 6\\
4\\
6\\
\end{ytableau}\Rightarrow
\begin{ytableau}
1 & 4 & 5 & 6\\
3 & 6\\
4\\
6\\
\end{ytableau}\Rightarrow\begin{ytableau}
1 & 3 & 4 & 5\\
2 & 5\\
3\\
5\\
\end{ytableau}$
\caption{If $g \geq 6$, then $\overline{W}^{(-3,-1,1)} (\Gamma)$ is connected in codimension 1.}
\label{Fig:Connect}
\end{figure}

\end{example}

\section{Cardinality of Tropical Splitting Type Loci}
\label{Sec:Count}

We begin this section by proving Theorem~\ref{thm:count}.

\begin{proof}[Proof of Theorem~\ref{thm:count}]
By Theorem~\ref{thm:UnionOverPhi},
\[
\overline{W}^{\bmu} (\Gamma) = \bigcup \TT (t) ,
\]
where the union is over all $k$-saturated tableaux on $\lambda (\bmu)$ with alphabet $[g]$.  Since $g = \vert \bmu \vert$, each torus $\TT(t)$ in this union is 0-dimensional, and therefore consists of a single divisor class.  Consider the composition of $\Phi_{\lambda (\bmu)}$ with the map sending a tableau $t$ to the unique divisor class in $\TT(t)$.  By the above, this composition surjects onto $\overline{W}^{\bmu} (\Gamma)$, and it suffices to show that it is injective.  Let
\[
\emptyset = \lambda_0 < \lambda_1 < \cdots < \lambda_g = \lambda (\bmu)
\]
\[
\emptyset = \lambda'_0 < \lambda'_1 < \cdots < \lambda'_g = \lambda (\bmu)
\]
be distinct maximal chains in $\mathcal{P}_k (\lambda (\bmu))$, and let $j$ be the minimal index such that $\lambda'_j \neq \lambda_j$.  By definition, $\lambda_j = \lambda^+_{j-1,a}$ and $\lambda'_j = \lambda^+_{j-1,b}$ for some $a \not\equiv b \pmod{k}$.  It follows that, if $\TT(t) = \{ D \}$, then $\xi_j (D) \equiv a \not\equiv b \pmod{k}$, so $D \notin \TT(t')$.  Therefore, every maximal chain in $\mathcal{P}_k$ corresponds to a distinct divisor class in $\overline{W}^{\bmu} (\Gamma)$. 
\end{proof}

\subsection{Algorithm for Computing Maximal Chains}

The number of maximal chains in $\mathcal{P}_k (\lambda)$ is an important invariant of a partition $\lambda \in \mathcal{P}_k$, not only because of Theorem~\ref{thm:count}, but also because of its connection to the affine symmetric group \cite{LapointeMorse}.  We would therefore like to compute this invariant in examples.  In order to simplify our arguments, we first show that a partition $\lambda \in \mathcal{P}_k$ is uniquely determined by the vector $\bC (\lambda)$.

\begin{lemma}
\label{lem:Reorder}
Let $\lambda, \lambda' \in \mathcal{P}_k$.  If there exists a permutation $\sigma \in S_k$ such that $C_a (\lambda) = C_{\sigma(a)} (\lambda')$ for all $a \in \ZZ/k\ZZ$, then $\lambda = \lambda'$.
\end{lemma}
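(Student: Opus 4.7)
My plan is to prove this by induction on $\rho_k(\lambda) = \sum_a C_a(\lambda)$, which equals $\rho_k(\lambda')$ and is hence determined by the common multiset. The base case $\rho_k=0$ is immediate: every $C_a$ vanishes, forcing both partitions to be empty. For the inductive step I would pick a ``maximal'' inside corner in each partition. Let $v=\max_a C_a(\lambda) > 0$; since $\lambda$ is a $k$-core the remark after the definition of $k$-descent gives some $C_a(\lambda)=0$, so not every entry is $v$, and I can select $a^*$ with $C_{a^*}(\lambda)=v>C_{a^*-1}(\lambda)$. By Lemma~\ref{lem:CongAFurtherLeft}, $\lambda$ has an inside corner in $D_{a^*}$; pick $b^*$ for $\lambda'$ analogously.

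Next I would apply downward displacement to both partitions. By Proposition~\ref{prop:ColChange}, the multisets of $\bC(\lambda^-_{a^*})$ and $\bC((\lambda')^-_{b^*})$ are each obtained from the common multiset by replacing a single $v$ with $v-1$, so they agree; the two resulting partitions are $k$-cores (Lemma~\ref{Lem:RowColCondition}) with strictly smaller $\rho_k$, so the inductive hypothesis gives $\lambda^-_{a^*}=(\lambda')^-_{b^*}=:\mu$. Lemma~\ref{lem:DisplacementsInvertible} then yields $\lambda=\mu^+_{a^*}$ and $\lambda'=\mu^+_{b^*}$, reducing the lemma to the assertion $a^* \equiv b^* \pmod k$.

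Establishing that last assertion is the main obstacle. I would argue by contradiction: assume $a^*\not\equiv b^*$, so $\mu$ has outside corners in two distinct diagonals. Inverting Proposition~\ref{prop:ColChange} shows that upward displacement at $D_a$ alters the multiset of $\bC$ by removing one copy of $C_{a-1}(\mu)$ and adding one copy of $C_{a-1}(\mu)+1$, so the matching-multiset hypothesis forces $C_{a^*-1}(\mu)=C_{b^*-1}(\mu)$. To contradict this, for each diagonal $D_a$ in which $\mu$ has an outside corner let $y_1(a)$ denote the largest $y$-coordinate of such a corner. Applying Proposition~\ref{prop:ColChange} to $\mu^+_a$ (and using $(\mu^+_a)^-_a=\mu$) gives
\[ C_{a-1}(\mu) = C_a(\mu^+_a) - 1 = y_1(a) - 1, \]
since the tallest column of $\mu^+_a$ ending in $D_a$ is precisely the column raised by the highest outside corner of $\mu$ in $D_a$. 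It therefore suffices to show $y_1(a^*)\neq y_1(b^*)$.

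This in turn reduces to a short direct observation: no two outside corners of a partition lying in distinct diagonals can share a $y$-coordinate. Indeed, if $(x_a,y)$ and $(x_b,y)$ were such outside corners with $x_a<x_b$, then partitions are non-increasing in column height, so all columns from $x_a$ through $x_b$ would have height $y-1$, contradicting the defining requirement at $(x_b,y)$ that column $x_b-1$ have height at least $y$. Together with the formula $C_{a-1}(\mu)=y_1(a)-1$, this yields $C_{a^*-1}(\mu)\neq C_{b^*-1}(\mu)$, contradicting the multiset equality. Hence $a^*=b^*$, so $\lambda=\lambda'$, and the induction closes. The key conceptual point is that Proposition~\ref{prop:ColChange} translates the geometric fact about outside-corner $y$-coordinates into exactly the arithmetic inequality needed to separate multisets under upward displacement.
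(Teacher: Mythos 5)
Your proof is correct, and it shares the paper's inductive skeleton (induct on $\rho_k$, downward-displace both partitions, apply the inductive hypothesis, recover $\lambda$ and $\lambda'$ via Lemma~\ref{lem:DisplacementsInvertible}), but the crucial step is handled differently. The paper identifies a common inside corner \emph{before} displacing: if $y$ is the maximal entry of $\bC(\lambda)$ and $x$ is the number of congruence classes attaining it, then the columns of height $y$ are exactly columns $1,\dots,x$ of both partitions, so $(x,y)$ is an inside corner of both, lying in the \emph{same} diagonal $D_{y-x}$; displacing both with respect to this single class makes the final upward displacement return the same partition with no further argument. You instead choose diagonals $a^*$ and $b^*$ separately and prove \emph{a posteriori} that $a^*\equiv b^*$, by computing how upward displacement changes the multiset of $\bC$ and observing that distinct outside corners of $\mu$ cannot share a $y$-coordinate. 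Both routes work; the paper's is shorter because the multiset pins down the corner position directly, while yours isolates a reusable fact (upward displacement at $D_a$ replaces one copy of $C_{a-1}(\mu)$ by $C_{a-1}(\mu)+1$, and these values separate the diagonals containing outside corners).

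One step deserves an extra line: the identity $C_a(\mu^+_a)=y_1(a)$. The columns of $\mu^+_a$ whose last box lies in $D_a$ are not only the raised ones; every column of $\mu$ ending in $D_a$ persists unchanged in $\mu^+_a$ (the box above its top lies in $D_{a+1}$, so it is never added). A priori, then, $C_a(\mu^+_a)=\max\{y_1(a),\,C_a(\mu)\}$, and you need $y_1(a)>C_a(\mu)$. This does follow from the paper's lemmas: $\mu^+_a$ has an inside corner in $D_a$, so $C_{a-1}(\mu^+_a)<C_a(\mu^+_a)$ by Lemma~\ref{lem:CongAFurtherLeft}, and Proposition~\ref{prop:ColChange} gives $C_{a-1}(\mu^+_a)=C_a(\mu)$, whence $C_a(\mu)<C_a(\mu^+_a)$ and the maximum must be $y_1(a)$. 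With that line added, the argument is complete.
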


\begin{proof}
We prove this by induction on $\rho_k (\lambda) = \rho_k (\lambda')$.  The base case is when $\rho_k (\lambda) = 0$, in which case $\lambda = \lambda'$ is the empty partition.  For the inductive step, let
\[
y = \max_{a \in \ZZ/k\ZZ} C_a (\lambda) = \max_{a \in \ZZ/k\ZZ} C_a (\lambda'),
\]
and let $x$ be the number of congruence classes $a \in \ZZ/k\ZZ$ such that $C_a (\lambda) = y$.  By definition, the first $x$ columns of both $\lambda$ and $\lambda'$ must all have height $y$.  If $\lambda$ is nonempty then it has an inside corner.  This implies that $x \leq k-1$ by Lemma~\ref{lem:CongAFurtherLeft}.  It follows that column $x+1$ of both $\lambda$ and $\lambda'$ has height less than $y$, so $(x,y)$ is an inside corner of both partitions, and $y = C_{y-x} (\lambda) = C_{y-x} (\lambda')$.  By Proposition~\ref{prop:ColChange}, there exists a permutation $\pi \in S_k$ such that
\[
C_a (\lambda_{y-x}^-) = C_{\pi(a)} (\lambda'^-_{y-x}) \mbox{ for all } a \in \ZZ/k\ZZ .
\]
By Lemma~\ref{Lem:RowColCondition}, $\lambda_{y-x}^- , \lambda'^-_{y-x} \in \mathcal{P}_k$, hence by induction, $\lambda_{y-x}^- = \lambda'^-_{y-x}$.  Finally, by Lemma~\ref{lem:DisplacementsInvertible}, we have
\[
\lambda = (\lambda_{y-x}^-)_{y-x}^+ = (\lambda'^-_{y-x})_{y-x}^+ = \lambda' .
\]
\end{proof}

Lemma~\ref{lem:Reorder} allows us to simplify arguments by focusing on the vectors $\bC (\lambda)$, rather than the partitions $\lambda$.  For example, Figure~\ref{Fig:Hasse} depicts the Hasse diagram of a principal order ideal in $\mathcal{P}_6$, where each partition $\lambda$ is represented by the vector $\bC (\lambda)$.  

\begin{figure}[h]
\begin{tikzpicture}

\node (0) at (0,0) {$(0,0,0,0,0,0)$};
\node[shape=circle,draw,inner sep=2pt] at (-1.5,0) {1};
\node (1) at (0,1) {$(1,0,0,0,0,0)$};
\node[shape=circle,draw,inner sep=2pt] at (-1.5,1) {1};
\draw (0)--(1);
\node (2a) at (-2,2) {$(0,2,0,0,0,0)$};
\node[shape=circle,draw,inner sep=2pt] at (-3.5,2) {1};
\draw (1)--(2a);
\node (2b) at (2,2) {$(1,0,0,0,0,1)$};
\node[shape=circle,draw,inner sep=2pt] at (0.5,2) {1};
\draw (1)--(2b);
\node (3a) at (-4,3) {$(0,0,3,0,0,0)$};
\node[shape=circle,draw,inner sep=2pt] at (-5.5,3) {1};
\draw (2a)--(3a);
\node (3b) at (0,3) {$(0,2,0,0,0,1)$};
\node[shape=circle,draw,inner sep=2pt] at (-1.5,3) {2};
\draw (2a)--(3b);
\draw (2b)--(3b);
\node (3c) at (4,3) {$(1,0,0,0,1,1)$};
\node[shape=circle,draw,inner sep=2pt] at (2.5,3) {1};
\draw (2b)--(3c);
\node (4a) at (-4,4) {$(0,0,3,0,0,1)$};
\node[shape=circle,draw,inner sep=2pt] at (-5.5,4) {3};
\draw (3a)--(4a);
\draw (3b)--(4a);
\node (4b) at (0,4) {$(2,2,0,0,0,0)$};
\node[shape=circle,draw,inner sep=2pt] at (-1.5,4) {2};
\draw (3b)--(4b);
\node (4c) at (4,4) {$(0,2,0,0,1,1)$};
\node[shape=circle,draw,inner sep=2pt] at (2.5,4) {3};
\draw (3b)--(4c);
\draw (3c)--(4c);
\node (5a) at (-4,5) {$(2,0,3,0,0,0)$};
\node[shape=circle,draw,inner sep=2pt] at (-5.5,5) {5};
\draw (4a)--(5a);
\draw (4b)--(5a);
\node (5b) at (0,5) {$(0,0,3,0,1,1)$};
\node[shape=circle,draw,inner sep=2pt] at (-1.5,5) {6};
\draw (4a)--(5b);
\draw (4c)--(5b);
\node (5c) at (4,5) {$(2,2,0,0,1,0)$};
\node[shape=circle,draw,inner sep=2pt] at (2.5,5) {5};
\draw (4b)--(5c);
\draw (4c)--(5c);
\node (6a) at (-4,6) {$(0,3,3,0,0,0)$};
\node[shape=circle,draw,inner sep=2pt] at (-5.5,6) {5};
\draw (5a)--(6a);
\node (6b) at (0,6) {$(0,0,0,4,1,1)$};
\node[shape=circle,draw,inner sep=2pt] at (-1.5,6) {6};
\draw (5b)--(6b);
\node (6c) at (4,6) {$(2,2,0,0,0,2)$};
\node[shape=circle,draw,inner sep=2pt] at (2.5,6) {5};
\draw (5c)--(6c);
\node (6d) at (8,6) {$(2,0,3,0,1,0)$};
\node[shape=circle,draw,inner sep=2pt] at (9.5,6) {16};
\draw (5a)--(6d);
\draw (5b)--(6d);
\draw (5c)--(6d);
\node (7a) at (-4,7) {$(0,3,3,0,1,0)$};
\node[shape=circle,draw,inner sep=2pt] at (-5.5,7) {21};
\draw (6a)--(7a);
\draw (6d)--(7a);
\node (7b) at (0,7) {$(2,0,0,4,1,0)$};
\node[shape=circle,draw,inner sep=2pt] at (-1.5,7) {22};
\draw (6b)--(7b);
\draw (6d)--(7b);
\node (7c) at (4,7) {$(2,0,3,0,0,2)$};
\node[shape=circle,draw,inner sep=2pt] at (2.5,7) {21};
\draw (6c)--(7c);
\draw (6d)--(7c);
\node (8a) at (-4,8) {$(0,3,0,4,1,0)$};
\node[shape=circle,draw,inner sep=2pt] at (-5.5,8) {43};
\draw (7a)--(8a);
\draw (7b)--(8a);
\node (8b) at (0,8) {$(0,3,3,0,0,2)$};
\node[shape=circle,draw,inner sep=2pt] at (-1.5,8) {42};
\draw (7a)--(8b);
\draw (7c)--(8b);
\node (8c) at (4,8) {$(2,0,0,4,0,2)$};
\node[shape=circle,draw,inner sep=2pt] at (2.5,8) {43};
\draw (7b)--(8c);
\draw (7c)--(8c);
\node (9a) at (-4,9) {$(0,0,4,4,1,0)$};
\node[shape=circle,draw,inner sep=2pt] at (-5.5,9) {43};
\draw (8a)--(9a);
\node (9c) at (4,9) {$(2,0,0,0,5,2)$};
\node[shape=circle,draw,inner sep=2pt] at (2.5,9) {43};
\draw (8c)--(9c);
\node (9d) at (0,9) {$(0,3,0,4,0,2)$};
\node[shape=circle,draw,inner sep=2pt] at (-1.5,9) {128};
\draw (8a)--(9d);
\draw (8b)--(9d);
\draw (8c)--(9d);
\node (10a) at (-2,10) {$(0,0,4,4,0,2)$};
\node[shape=circle,draw,inner sep=2pt] at (-3.5,10) {171};
\draw (9a)--(10a);
\draw (9d)--(10a);
\node (10b) at (2,10) {$(0,3,0,0,5,2)$};
\node[shape=circle,draw,inner sep=2pt] at (0.5,10) {171};
\draw (9c)--(10b);
\draw (9d)--(10b);
\node (11) at (0,11) {$(0,0,4,0,5,2)$};
\node[shape=circle,draw,inner sep=2pt] at (-1.5,11) {342};
\draw (10a)--(11);
\draw (10b)--(11);
\node (12) at (0,12) {$(0,0,0,5,5,2)$};
\node[shape=circle,draw,inner sep=2pt] at (-1.5,12) {342};
\draw (11)--(12);

\end{tikzpicture}
\caption{A principal order ideal in $\mathcal{P}_6$.  The circled values indicate the number of maximal chains below each vector.}
\label{Fig:Hasse}

\end{figure}
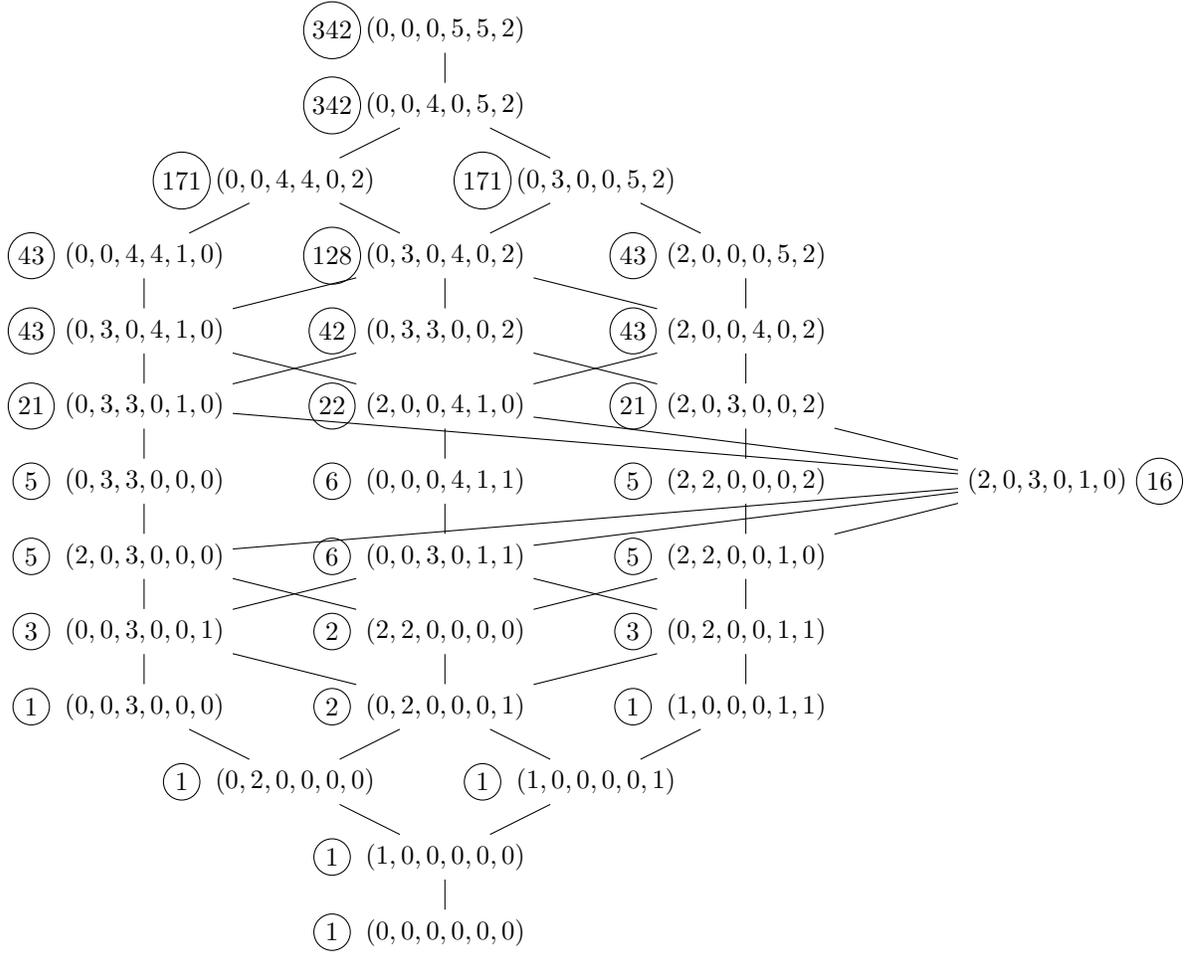

Given a partition $\lambda \in \mathcal{P}_k$, we provide an algorithm for producing the Hasse diagram $\mathcal{P}_k (\lambda)$, as in Figure~\ref{Fig:Hasse}.

\begin{algorithm}
\label{Alg:Recur}
\textbf{Step 1:}  Initialize with the vector $\bC (\lambda)$.

\textbf{Step 2:}  For each vector $\bC$, write below it the vectors $\bC^-_a$, for all $a$ such that $C_{a-1} < C_a$.

\textbf{Step 3:}  Iterate Step 2 for each vector that is written down, until exhaustion.
\end{algorithm}

By Lemma~\ref{lem:Reorder}, the number of partitions in $\mathcal{P}_k$ or rank $\rho$ is less than or equal to the number of partitions of $\rho$ with at most $k-1$ parts.  (In fact, these numbers are equal, see \cite[Proposition~1.3]{LLMSSZ}.)  Together with the fact that each partition covers at most $k-1$ others, this implies that the algorithm terminates in polynomial time for fixed $k$.

We introduce notation that will simplify our examples.  Given $\lambda \in \mathcal{P}_k$, we define $\alpha (\bC (\lambda))$ to be the number of maximal chains in $\mathcal{P}_k (\lambda)$.  By Lemma~\ref{lem:Reorder}, this is well-defined.  We further define $\alpha$ up to cyclic permutation; that is,
\[
\alpha \Big( C_i (\lambda), C_{i+1} (\lambda) , \ldots , C_{i-1} (\lambda) \Big) = \alpha \Big( C_0 (\lambda) , C_1 (\lambda) , \ldots , C_{k-1} (\lambda) \Big) .
\]
Again, by Lemma~\ref{lem:Reorder}, $\alpha$ is well-defined.  Indeed, by Lemma~\ref{lem:Reorder}, $\alpha$ could be defined up to arbitrary permutation, but in practice it is important to keep track of which values $C_a$ are consecutive.  This is because $\alpha$ satisfies the following recurrence.

\begin{lemma}
\label{lem:recur}
For any $\lambda \in \mathcal{P}_k$, we have
\[
\alpha (\bC (\lambda)) = \sum_{\substack{a \in \ZZ/ k\ZZ \text{ s.t.} \\ C_{a-1} (\lambda) < C_a (\lambda)}} \alpha (\bC (\lambda)^-_a ) .
\]
\end{lemma}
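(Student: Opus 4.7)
The plan is to view this as the standard recurrence for counting maximal chains in a graded poset: every maximal chain up to $\lambda$ is obtained by appending $\lambda$ to a maximal chain up to an element covered by $\lambda$, so $\alpha(\bC(\lambda))$ should equal the sum of $\alpha(\bC(\lambda'))$ over all $\lambda'$ covered by $\lambda$ in $\mathcal{P}_k$. The work is therefore just to identify the covers of $\lambda$ combinatorially.

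First I would invoke Corollary~\ref{cor:PosetIsPoset}, which tells us that $\mathcal{P}_k$ is graded with rank function $\rho_k$, so any maximal chain in $\mathcal{P}_k(\lambda)$ has length $\rho_k(\lambda)$ and its penultimate term is an element covered by $\lambda$. Next I would characterize these covers: by the definition of the poset, an element covered by $\lambda$ must be of the form $\lambda^-_a$ for some $a \in \mathbb{Z}/k\mathbb{Z}$ such that $\lambda$ has an inside corner in $D_a$ (so that the displacement actually removes a box and stays in $\mathcal{P}_k$, using Lemma~\ref{Lem:RowColCondition} to guarantee that $\lambda^-_a$ is a $k$-core, together with Lemma~\ref{lem:DisplacementsInvertible} to see that $\lambda^-_a \lessdot \lambda$). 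By Lemma~\ref{lem:CongAFurtherLeft}, the existence of an inside corner of $\lambda$ in $D_a$ is equivalent to the inequality $C_{a-1}(\lambda) < C_a(\lambda)$, which matches the index set of the sum in the statement.

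Next I would check distinctness: the covers $\lambda^-_a$ for distinct valid $a$ are distinct partitions, because $\lambda \smallsetminus \lambda^-_a$ consists of inside corners in a single diagonal $D_a$ and different diagonals give disjoint sets of boxes. This lets us partition the set of maximal chains in $\mathcal{P}_k(\lambda)$ according to the choice of penultimate element $\lambda^-_a$, and within each class the map ``forget the final step $\lambda^-_a \lessdot \lambda$'' is a bijection onto the set of maximal chains in $\mathcal{P}_k(\lambda^-_a)$. Summing the cardinalities of the classes gives
\[
\alpha(\bC(\lambda)) = \sum_{\substack{a \in \mathbb{Z}/k\mathbb{Z} \text{ s.t.} \\ C_{a-1}(\lambda) < C_a(\lambda)}} (\text{number of maximal chains in } \mathcal{P}_k(\lambda^-_a)).
\]

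Finally I would translate back into the $\bC$-notation: by Proposition~\ref{prop:ColChange}, $\bC(\lambda^-_a) = \bC(\lambda)^-_a$, and by Lemma~\ref{lem:Reorder} the invariant $\alpha$ is well-defined as a function of $\bC$, so the number of maximal chains in $\mathcal{P}_k(\lambda^-_a)$ equals $\alpha(\bC(\lambda)^-_a)$. This yields the claimed identity. There is no real obstacle here — the substantive content has been done in the preceding lemmas (especially Corollary~\ref{cor:PosetIsPoset}, Lemma~\ref{lem:CongAFurtherLeft}, and Proposition~\ref{prop:ColChange}); the only thing to be careful about is correctly matching the indexing set of covers with the condition $C_{a-1}(\lambda) < C_a(\lambda)$, which is precisely the content of Lemma~\ref{lem:CongAFurtherLeft}.
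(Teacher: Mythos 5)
Your proposal is correct and follows essentially the same route as the paper's proof: decompose maximal chains by their penultimate element, identify the covers of $\lambda$ as the downward displacements $\lambda^-_a$ at diagonals containing an inside corner, convert that condition to $C_{a-1}(\lambda) < C_a(\lambda)$ via Lemma~\ref{lem:CongAFurtherLeft}, and finish with Proposition~\ref{prop:ColChange}. The only difference is that you spell out a few details (gradedness, distinctness of covers, well-definedness of $\alpha$ via Lemma~\ref{lem:Reorder}) that the paper leaves implicit.
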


\begin{proof}
The number of maximal chains in $\mathcal{P}_k (\lambda)$ is equal to the sum, over $\lambda' \in \mathcal{P}_k$ covered by $\lambda$, of the number of maximal chains in $\mathcal{P}_k (\lambda')$.  By definition, $\lambda' \in \mathcal{P}_k$ is covered by $\lambda$ if and only if $\lambda' = \lambda^-_a$ and $\lambda$ has an inside corner in $D_a$.  By Lemma~\ref{lem:CongAFurtherLeft}, $\lambda$ has an inside corner in $D_a$ if and only if $C_{a-1} (\lambda) < C_a (\lambda)$.  The result then follows from Proposition~\ref{prop:ColChange}.
\end{proof}

Using Algorithm~\ref{Alg:Recur} and Lemma~\ref{lem:recur}, one can compute $\alpha (\bC (\lambda))$ recursively.  Start at the bottom of the Hasse diagram, note that $\alpha (\vec{0}) = 1$, and then proceed upwards, summing the numbers that appear directly below each vector.  These numbers appear in the circles in Figure~\ref{Fig:Hasse}.

\subsection{Examples}

The remainder of the paper consists of examples, using Lemma~\ref{lem:recur} to compute the number of maximal chains in $\mathcal{P}_k (\lambda (\bmu))$ for various splitting types $\bmu$.  In many cases, we will see that this number agrees with the cardinality of $\overline{W}^{\bmu} (C)$ for general $(C,\pi) \in \mathcal{H}_{g,k}$.  In each case, we assume that $g = \vert \bmu \vert$.  By Theorem~\ref{Thm:MainThm}, this implies that $W^{\bmu} (C) = \overline{W}^{\bmu} (C)$.

\begin{example}
\label{Ex:Classic}
If $-2 \leq \mu_j \leq 0$ for all $j$, then $\lambda (\bmu) = \lambda_0 (\bmu)$ is a rectangle, and every $k$-uniform displacement tableau on $\lambda (\bmu)$ is a standard Young tableau.  The number of such tableaux is counted by the standard hook-length formula:
\[
\vert \overline{W}^{\bmu} (\Gamma) \vert = \vert \bmu \vert ! \prod_{j = 0}^{x_0 (\bmu)-1} \frac{j!}{(y_0 (\bmu) + j)!} .
\]
It is a classical result, due to Castelnuovo, that this formula also yields the number of $g^r_d$'s on a general curve of genus $\vert \bmu \vert$, where $r = x_0 (\bmu) - 1$, and $d = d (\bmu)$ \cite[p.211]{ACGH}.
\end{example}

\begin{example}
\label{Ex:OneCol}
If $\mu_j$ is equal to either $\mu_1$ or $\mu_1 + 1$ for each $j < k$, then $d(\bmu) = k\mu_k$ and up to cyclic permutation we have $\bC (\lambda (\bmu)) = (\vert \bmu \vert , 0, 0, \ldots , 0)$.  For ease of notation, we write this as $\bC (\lambda (\bmu)) = (\vert \bmu \vert , 0^{(k-1)})$.  We show that $\alpha (z,0^{(k-1)}) = 1$.  This is easy to see by induction on $z$.  It is clear that $\alpha (1,0^{(k-1)}) = 1$, and by Lemma~\ref{lem:recur}, we have
\[
\alpha (z,0^{(k-1)}) = \alpha (0^{(k-1)},z-1) = \alpha (z-1,0^{(k-1)}) .
\]

Now, if $D \in \overline{W}^{\bmu} (C)$, then by definition, $\deg D = k\mu_k$ and $D - \mu_k g^1_k$ is effective.  It follows that $\overline{W}^{\bmu} (C) = \{ \mu_k g^1_k \}$.  This splitting type locus therefore has cardinality 1, equal to that of $\overline{W}^{\bmu} (\Gamma)$.
\end{example}

We note that Serre duality induces a bijection between $\overline{W}^{\bmu} (C)$ and $\overline{W}^{\bmu^T} (C)$.  Tropically, this corresponds to the fact that the number of maximal chains in $\mathcal{P}_k (\lambda)$ is equal to the number of maximal chains in $\mathcal{P}_k (\lambda^T)$.  If we apply this observation to Example~\ref{Ex:OneCol}, we see that if $\mu_j$ is equal to either $\mu_k$ or $\mu_k - 1$ for each $j > 1$, then
\[
\vert \overline{W}^{\bmu} (C) \vert = \vert \overline{W}^{\bmu} (\Gamma) \vert = 1 .
\]
A similar remark applies to each of the examples below.

\begin{example}
\label{Ex:Catalan}
Let $\bmu = (-3, -2 , \ldots , -2, 0, 0)$.  Then $g = 2k-2$, and $\lambda (\bmu)$ is the partition depicted in Figure~\ref{Fig:Catalan}.

\begin{figure}[H]
\begin{ytableau}
{} & {} & {\square} & {\triangle} \\
{} & {} \\
{} & {} \\
\none[\vdots] & \none[\vdots] \\
{\square} & {\triangle}
\end{ytableau}

\caption{The partition $\lambda (\bmu)$ of Example~\ref{Ex:Catalan}.}
\label{Fig:Catalan}
\end{figure}

If $t$ is a $k$-uniform displacement tableau on $\lambda (\bmu)$, then the restriction of $t$ to the first two columns is a standard Young tableau.  If $t$ has precisely $2k-2$ symbols, then we must have $t(3,1) = t(1,k-1)$ and $t(4,1) = t(2,k-1)$.  (These are the boxes labeled with a square and a triangle, respectively, in Figure~\ref{Fig:Catalan}.)  It follows that $t(2,1) < t(1,k-1)$.  Since the number of standard Young tableaux on the first two columns is the $(k-1)$st Catalan number $C_{k-1}$, and since there is a unique such standard Young tableau $t$ with $t(2,1) > t(1,k-1)$, we see that the number of $k$-uniform displacement tableaux on $\lambda (\bmu)$ with precisely $2k-2$ symbols is $C_{k-1} - 1$.

A general curve $C$ of genus $2k-2$ has gonality $k$, and by Example~\ref{Ex:Classic}, the number of gonality pencils is precisely $C_{k-1}$.  Such a pencil is in $\overline{W}^{\bmu} (C)$ if and only if it is not equal to the distinguished $g^1_k$.  It follows that $\vert \overline{W}^{\bmu} (C) \vert = C_{k-1} - 1$, confirming Conjecture~\ref{Conj:Class} in this case.
\end{example}

\begin{example}
\label{Ex:Trigonal}
If $k=2$, then every splitting type $\bmu$ satisfies the hypotheses of Example~\ref{Ex:OneCol}.  The first interesting examples, therefore, occur when $k$ is equal to 3.  Let $k=3$, and suppose that $\bmu$ is not of the type considered in Example~\ref{Ex:OneCol}.  In other words, $\mu_3 > \mu_2 + 1$, and $\mu_2 > \mu_1 + 1$.  Then $g = 2(\mu_3 - \mu_1) - 3$ is odd, and up to cyclic permutation, we have 
\[
\bC (\lambda (\bmu)) = ( 2\mu_3 - \mu_2 - \mu_1 - 2, \mu_2 - \mu_1 -1, 0) .
\]
We show that
\[
\alpha ( 2\mu_3 - \mu_2 - \mu_1 - 2, \mu_2 - \mu_1 -1, 0) = {{\mu_3 - \mu_1 - 2}\choose{\mu_2 - \mu_1 - 1}} .
\]

One way to see that this formula is invariant under transposition is to note that $\mu_2 - \mu_1 - 1$ is equal to the number of strict rank jumps of size 2, whereas $\mu_3 - \mu_2 - 1$ is equal to the number of strict rank jumps of size 1.  As in Example~\ref{Ex:OneCol}, we prove this by induction.  When $\mu_2 - \mu_1 - 1 = 0$, the result follows from Example~\ref{Ex:OneCol}, and when $\mu_3 - \mu_2 - 1 = 0$, the result follows from the same example applied to $\lambda (\bmu)^T$.  If $z_1 - 1 > z_2 > 0$, then by Lemma~\ref{lem:recur}, we have
\begin{align*}
\alpha ( z_1, z_2, 0) &= \alpha ( 0 , z_2 , z_1 -1) \\
 &= \alpha (z_2 - 1, 0 , z_1 -1 ) + \alpha (0, z_1 - 2, z_2 ) .
\end{align*}
This expression has the following interpretation.  If $\bC (\lambda (\bmu)) = (z_1 , z_2, 0)$, then $\bC (\lambda (\bmu^+)) = (z_2 - 1, 0, z_1 - 1)$ and $\bC (\lambda (\bmu^-)) = (0, z_1 - 2 , z_2)$.  In other words, the number of $k$-saturated tableaux on $\lambda (\bmu)$ is the sum of the number on a partition with one fewer row and the number on a partition with one fewer column.  Evaluating this expression and applying induction, we see that
\[
\alpha ( 2\mu_3 - \mu_2 - \mu_1 - 2, \mu_2 - \mu_1 -1, 0) 
= {{\mu_3 - \mu_1 - 3}\choose{\mu_2 - \mu_1 - 2}} + {{\mu_3 - \mu_1 - 3}\choose{\mu_2 - \mu_1 - 1}} = {{\mu_3 - \mu_1 - 2}\choose{\mu_2 - \mu_1 - 1}} .
\]

In \cite[Theorem~1.1]{LarsonTrigonal}, Larson computes the cardinality of $\overline{W}^{\bmu} (C)$ for a general trigonal curve $C$ of Maroni invariant $n$.  Since $g$ is odd, if $(C,\pi) \in \mathcal{H}_{g,3}$ is general, it has Maroni invariant 1.  Larson's formula then yields the binomial coefficient above, confirming Conjecture~\ref{Conj:Class} for $k=3$.

Example~\ref{Ex:Trigonal} can be generalized to the case where $k$ is arbitrary and
\[
\mu_2 = \mu_3 = \cdots = \mu_{k-1}.
\]
This is done in Example~\ref{Ex:OneRowOneCol} below.
\end{example}

We now consider examples where $k$ is equal to 4, 5, or 6.  We do not consider every splitting type in these cases, considering only the ``maximal'' splitting types in which every strict rank jump has the same size $\alpha$.  If all strict rank jumps of $\bmu$ have size $\alpha$, then all strict rank jumps of $\bmu^T$ have size $k-\alpha$, so it suffices to consider the case where $\alpha \leq \frac{k}{2}$.  Since Example~\ref{Ex:OneCol} is the case where $\alpha = 1$, the first interesting case occurs when $k$ is equal to 4.  We do not know if Conjecture~\ref{Conj:Class} holds for these splitting types, proving it in only a small number of cases.

\begin{example}
\label{Ex:Four}
Let $k=4$, and suppose that $\alpha =2$.  In other words, $\mu_2$ is equal to either $\mu_1$ or $\mu_1 + 1$, and $\mu_3$ is equal to either $\mu_4$ or $\mu_4 - 1$.  In this case we see that, up to cyclic permutation, $\bC (\lambda (\bmu))$ is either of the form $(z,z,0,0)$ or $(z+1,z-1,0,0)$.  We show, by induction on $z$, that
\[
\alpha (z,z,0,0) = \alpha (z+1,z-1,0,0) = 2^{z-1} .
\]

The base case, when $z$ is equal to 1, is covered by Example~\ref{Ex:OneCol}.  For the inductive step, by Lemma~\ref{lem:recur}, we see that
\begin{align*}
\alpha (z,z,0,0) = \alpha (0,z,0,z-1) &= \alpha (z-1,0,0,z-1) + \alpha (0,z,z-2,0) \\
&= 2^{z-2} + 2^{z-2} = 2^{z-1} \\
\alpha (z+1,z-1,0,0) = \alpha (0,z-1,0,z) & = \alpha (z-2,0,0,z) + \alpha (0,z-1,z-1,0) \\
&= 2^{z-2} + 2^{z-2} = 2^{z-1} .
\end{align*}
As in Example~\ref{Ex:Trigonal}, the expressions on the right are equal to $\alpha (\bC (\lambda (\bmu^+))) + \alpha (\bC (\lambda (\bmu^-)))$.

In general, we do not know if Conjecture~\ref{Conj:Class} holds in this case.  It holds for $z \leq 2$ by Example~\ref{Ex:Classic}, and for $z = 3$ by Example~\ref{Ex:Catalan}.  We will see in Example~\ref{Ex:Quadric} below that it also holds for the splitting type $\bmu = (-3,-3,0,0)$, in which case $z=4$.
\end{example}

\begin{example}
\label{Ex:Five}
Let $k=5$, and suppose that $\alpha = 2$.  In other words, $\mu_2$ and $\mu_3$ are equal to either $\mu_1$ or $\mu_1 + 1$, and $\mu_4$ is equal to either $\mu_5$ or $\mu_5 - 1$.  Up to cyclic permutation, $\bC (\lambda (\bmu))$ is either of the form $(z,z,0,0,0)$ or $(z+2,z-1,0,0,0)$.  We show, by induction on $z$, that
\begin{align*}
\alpha (z,z,0,0,0) &= F_{2z-2} \\
\alpha (z+2,z-1,0,0,0) &= F_{2z-1},
\end{align*}
where $F_n$ denotes the $n$th Fibonacci number.  The base case, where $z=1$, follows from Example~\ref{Ex:OneCol}.  For the inductive step, by Lemma~\ref{lem:recur}, we have
\begin{align*}
\alpha (z,z,0,0,0) &= \alpha (0,z,0,0,z-1) \mbox{ and} \\
\alpha (z+2,z-1,0,0,0) &= \alpha (0,z-1,0,0,z+1) ,
\end{align*}
so we will also show by induction that $\alpha (0,z,0,0,z-1) = F_{2z-2}$ and $\alpha (0,z-1,0,0,z+1) = F_{2z-1}$.  Again, the base cases follow from Example~\ref{Ex:OneCol}.  Together with the inductive hypothesis, by Lemma~\ref{lem:recur}, we have
\begin{align*}
\alpha (0,z,0,0,z-1) &= \alpha (z-1,0,0,0,z-1) + \alpha (0,z,0,z-2,0) \\
&= F_{2z-4} + F_{2z-3} = F_{2z-2} \\
\alpha (0,z-1,0,0,z+1) &= \alpha (z-2,0,0,0,z+1) + \alpha (0,z-1,0,z,0) \\
&= F_{2z-3} + F_{2z-2} = F_{2z-1} .
\end{align*}

Conjecture~\ref{Conj:Class} holds when $-2 \leq \mu_j \leq 0$ for all $j$ by Example~\ref{Ex:Classic}, and when $\bmu = (-3,-2,-2,0,0)$ by Example~\ref{Ex:Catalan}.  We will see in Example~\ref{Ex:Quadric} below that it also holds when $\bmu = (-3,-3,-2,0,0)$.  We now show that it holds when $\bmu = (-3,-3,-2,-1,0)$.

In this case, $g=7$, $\bC (\lambda (\bmu)) = (5,2,0,0,0)$, and $\alpha (5,2,0,0,0) = F_5 = 8$.  For $(C,\pi) \in \mathcal{H}_{7,5}$, we see that $D \in \overline{W}^{\bmu} (C)$ if and only if $D$ is effective of degree 2 and $K_C - g^1_5 - D$ has rank at least 1.  By Riemann-Roch, the divisor class $K_C - g^1_5$ has degree 7 and rank 2.  The image of $C$ under the complete linear series $\vert K_C - g^1_5 \vert$ is a plane curve of degree 7, with ${{7-1}\choose{2}} - 7 = 8$ nodes.  An effective divisor $D$ satisfies $\rk (K_C - g^1_5 - D) \geq 1$ if and only if the image of $D$ under this map is a single point.  It follows that the divisor classes in $\overline{W}^{\bmu} (C)$ are precisely the preimages of the nodes, and thus that $\vert \overline{W}^{\bmu} (C) \vert = 8$.  
\end{example}

\begin{example}
\label{Ex:Six2}
Let $k=6$, and suppose that $\alpha = 2$.  Up to cyclic permutation, $\bC (\lambda (\bmu))$ is either of the form $(z,z,0,0,0,0)$ or $(z+2,z-2,0,0,0,0)$.  We show, by induction on $z$, the following formulas:
\begin{align*}
\alpha (z,z,0,0,0,0) = \alpha (0,z,0,0,0,z-1) &= \frac{3^{z-1} + 1}{2} \\
\alpha (z+2,z-2,0,0,0,0) = \alpha (0,z-2,0,0,0,z+1) &= \frac{3^{z-1}-1}{2} \\
\alpha (z+1,0,0,z-1,0,0) &= 3^{z-1} .
\end{align*}

The base cases, when $z=1$ on the first and third line, or when $z=2$ on the second line, follow from Example~\ref{Ex:OneCol}.  The first equality on each of the first two lines above follows directly from Lemma~\ref{lem:recur}.  For the inductive step, by induction together with Lemma~\ref{lem:recur}, we have
\begin{align*}
\alpha (0,z,0,0,0,z-1) &= \alpha (z-1,0,0,0,0,z-1) + \alpha (0,z,0,0,z-2,0) \\ 
&= \frac{3^{z-2} + 1}{2} + 3^{z-2} = \frac{3^{z-1} + 1}{2} \\
\alpha (0,z-2,0,0,0,z+1) &= \alpha (z-3,0,0,0,0,z+1) + \alpha (0,z-2,0,0,z,0) \\
&= \frac{3^{z-2} - 1}{2} + 3^{z-2} = \frac{3^{z-1} - 1}{2} \\
\alpha (z+1,0,0,z-1,0,0) &= \alpha (0,0,0,z-1,0,z) + \alpha (z+1,0,z-2,0,0,0) \\
&= \frac{3^{z-1} + 1}{2} + \frac{3^{z-1} - 1}{2} = 3^{z-1} .
\end{align*}

Conjecture~\ref{Conj:Class} holds when $z \leq 3$, and for the splitting type $\bmu = (-2,-2,-2,-2,0,0)$ by Example~\ref{Ex:Classic}. It also holds for the splitting type $\bmu = (-3,-2,-2,-2,0,0)$ by Example~\ref{Ex:Catalan}.  The splitting types $\bmu = (-3,-3,-2,-2,0,0)$ and $\bmu^T = (-3,-3,-1,-1,0,0)$ will make an appearance in Example~\ref{Ex:Quadric} below. 

\end{example}

\begin{example}
\label{Ex:Six3}
Let $k=6$, and suppose that $\alpha = 3$.  Up to cyclic permutation, $\bC (\lambda (\bmu))$ is of the form $(z,z,z,0,0,0)$, $(z+1,z+1,z-2,0,0,0)$, or $(z+2,z-1,z-1,0,0,0)$.  To formulate expressions in these cases, we first introduce the function
\begin{displaymath}
\beta (z) := \left\{ \begin{array}{ll}
2 & \textrm{if $z \equiv 0 \pmod{3}$} \\
-1 & \textrm{otherwise.}
\end{array}\right.
\end{displaymath}
Note that $\beta (z-1) + \beta (z) = - \beta (z+1)$.
By a similar argument to Examples~\ref{Ex:Four}, ~\ref{Ex:Five}, and~\ref{Ex:Six2}, we obtain the following formulas.
\begin{align*}
\alpha (z,z,z,0,0,0) = \alpha (0,z,z,0,0,z-1) &= \frac{2^{3z-2} + (-1)^z\beta (z)}{3} \\
\alpha (z+1,z+1,z-2,0,0,0) = \alpha (0,z+1,z-2,0,0,z) &= \frac{2^{3z-2} + (-1)^z \beta (z-1)}{3} \\
\alpha (z+2,z-1,z-1,0,0,0) = \alpha (0,z-1,z-1,0,0,z+1) &= \frac{2^{3z-2} + (-1)^z \beta (z+1)}{3} \\
\alpha(z-1,0,z,0,z+1,0) &= 2^{3z-2} .
\end{align*}

We will consider the splitting type $\bmu = (-3,-3,-2,-1,0,0)$ in Example~\ref{Ex:Quadric} below.  The Hasse diagram pictured in Figure~\ref{Fig:Hasse} is that of $\mathcal{P}_6 (\lambda (\bmu))$.
\end{example}

\begin{example}
\label{Ex:Quadric}
Let $(C,\pi) \in \mathcal{H}_{2k,k}$ be general, and let $L = K_C - g^1_k$.  By Riemann-Roch, $h^0 (C,L) = k+1$, and we consider the image of $C$ in $\PP^k$ under the complete linear series $\vert L \vert$.  We have
\begin{align*}
\mathrm{exp dim} H^0 (\PP^k , \mathcal{I}_C (2)) &= \dim \Sym^2 H^0 (C,L) - \dim H^0 (C,2L) \\
& = {{k+2}\choose{2}} - (4k-3).
\end{align*}
The variety $X_4$ parameterizing quadrics of rank at most 4 in $\PP^k$ has dimension $4k-2$, so one expects the curve $C$ to be contained in a finite number of rank 4 quadrics.  The expected number of rank 4 quadrics in $H^0 (\PP^k, \mathcal{I}_C (2))$ is
\[
\deg X_4 = \frac{{{k+1}\choose{k-3}}{{k+2}\choose{k-4}} \cdots {{2k-3}\choose{1}}}{{{1}\choose{0}}{{3}\choose{1}}{{5}\choose{2}} \cdots {{2k-7}\choose{k-4}}} \mbox{ \cite{HarrisTu}}.
\]
Each rank 4 quadric is a cone over $\PP^1 \times \PP^1$, and the pullback of $\cO (1)$ from each of the two factors yields a pair of line bundles on $C$, each of rank 1, whose tensor product is $L$.

Conversely, given a pair of divisor classes $D, D'$, each of rank 1, such that $D+D' = L$, we obtain a rank 4 quadric in $\PP^k$ containing $C$.  To see this, let $s_0 , s_1$ be a basis for $H^0 (C,D)$ and $t_0 , t_1$ be a basis for $H^0 (C,D')$.  Then the entries of the $2 \times 2$ matrix $M_{ij} = (s_i \otimes t_j)$ are linear forms in $\PP^k$, and the determinant of this matrix is a rank 4 quadric that vanishes on $C$.  In other words, each rank 4 quadric corresponds to a pair of divisors in the set
\begin{align*}
\left\{ D \in \Pic (C) \mbox{ } \vert \mbox{ } h^0(C,D) = h^0 (C,L-D) = 2  \right\} & \\ 
= \Big( \bigcup_{i=0}^{k-4} W^{(-3^{(2)},-2^{(i)},-1^{(k-4-i)},0^{(2)})} (C) \Big) \cup \{ g^1_k \} \cup \{ L - g^1_k \} .
\end{align*}

Since $(C,\pi)$ is general, the splitting type loci in the union above are all smooth of dimension zero, and we see that 
\[
2 + \sum_{i=0}^{k-4} \Big\vert W^{(-3^{(2)},-2^{(i)},-1^{(k-4-i)},0^{(2)})} (C) \Big\vert = 2 \frac{{{k+1}\choose{k-3}}{{k+2}\choose{k-4}} \cdots {{2k-3}\choose{1}}}{{{1}\choose{0}}{{3}\choose{1}}{{5}\choose{2}} \cdots {{2k-7}\choose{k-4}}} .
\]
We now show that this expression holds for $\Gamma$ when $k \leq 6$.  By Example~\ref{Ex:Four}, when $k=4$, we have
\[
2+ \Big\vert W^{(-3,-3,0,0)} (\Gamma) \Big\vert = 2 + 2^3 = 10 = 2 {{5}\choose{1}} .
\]
By Example~\ref{Ex:Five}, when $k=5$, we have
\begin{align*}
2 + \Big\vert W^{(-3,-3,-2,0,0)} (\Gamma) \Big\vert + \Big\vert W^{(-3,-3,-1,0,0)} (\Gamma) \Big\vert \\
= 2 + F_8 + F_8 = 2+ 34 + 34 = 70 = 2 \frac{{{6}\choose{2}}{{7}\choose{1}}}{{{1}\choose{0}}{{3}\choose{1}}} .
\end{align*}
By Examples~\ref{Ex:Six2} and~\ref{Ex:Six3}, when $k=6$, we have
\begin{align*}
2+ \Big\vert W^{(-3,-3,-2,-2,0,0)} (\Gamma) \Big\vert + \Big\vert W^{(-3,-3,-2,-1,0,0)} (\Gamma) \Big\vert + \Big\vert W^{(-3,-3,-1,-1,0,0)} (\Gamma) \Big\vert \\
= 2 + \frac{3^5 + 1}{2} + \frac{2^{10} + 2}{3} + \frac{3^5 + 1}{2} = 2+ 122 + 342 + 122 = 588 = 2 \frac{{{7}\choose{3}}{{8}\choose{2}}{{9}\choose{1}}}{{{1}\choose{0}}{{3}\choose{1}}{{5}\choose{3}}} .
\end{align*}
\end{example}

\begin{example}
\label{Ex:OneRowOneCol}
We now consider the case where $k$ is arbitrary and
\[
\mu_2 = \mu_3 = \cdots = \mu_{k-1}.
\]
The cases where $\mu_k \leq \mu_{k-1} +1$ or $\mu_1 \geq \mu_2 - 1$ are covered in Example~\ref{Ex:OneCol}, so we assume otherwise.  For ease of notation, we write $z_1 = (k-1)(\mu_k -1) - (k-2)\mu_2 - \mu_1$ and $z_2 = \mu_2 - \mu_1 - 1$.  Then $\bC (\lambda(\bmu)) = ( z_1, z_2^{(k-2)} , 0 )$, and we will show in Lemma~\ref{lem:OneRowOneCol} below that 
\[
\alpha ( z_1, z_2^{(k-2)} , 0 ) = {{(k-2)(\mu_k - \mu_1 - 2)}\choose{(k-2)(\mu_2 - \mu_1 - 1)}} .
\]

This expression matches the cardinality of $\overline{W}^{\bmu} (C)$ for general $(C,\pi) \in \mathcal{H}_{g,k}$.  To see this, following \cite[Lemma~2.2]{LarsonTrigonal}, we see that
\[
W^{\bmu} (C) = \left\{ D \in \Pic^{d(\bmu)} (C) \mbox{ } \vert \mbox{ } h^0 (D - \mu_k g^1_k) = h^0 (K_C - D + (\mu_1 + 2)g^1_k ) = 1  \right\} .
\]
In other words, $D \in W^{\bmu} (C)$ if and only if $D = \mu_k g^1_k + E$, where $E$ is an effective divisor of degree $(k-2)(\mu_2 - \mu_1 - 1)$, such that $K_C - (\mu_k - \mu_1 - 2)g^1_k - E$ is also effective.  Note that 
\[
\deg \Big( K_C - (\mu_k - \mu_1 -2)g^1_k \Big) = (k-2) (\mu_k - \mu_1 - 2) .
\]
Since $C$ is general, $K_C - (\mu_k - \mu_1 -2)g^1_k$ is equivalent to a unique effective divisor.  If this divisor is a sum of distinct points, then the set of divisor classes $E$ satisfying the conditions above is simply the set of subsets of these points of size $(k-2)(\mu_2 - \mu_1 - 1)$.  We therefore see that $\vert W^{\bmu} (C) \vert$ is equal to the binomial coefficient above.

\begin{lemma}
\label{lem:OneRowOneCol}
Let $z_1 \geq z_2 \geq 0$ be integers, let $\vec{z}_i (z_2) = (z_2^{(k-2-i)}, (z_2 -1)^{(i)}, 0)$, and let $\vec{z}_{ij} (z_1 , z_2)$ be the vector obtained from $\vec{z}_i (z_2)$ by inserting $z_1$ between entries $j$ and $j+1$.  Then
\[
\alpha (\vec{z}_{ij} (z_1 , z_2 )) = {{\Big\lfloor \frac{k-2}{k-1} \Big( z_1 + (k-2)z_2 \Big) \Big\rfloor - i}\choose{(k-2)z_2 - i}} .
\]
\end{lemma}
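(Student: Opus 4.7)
The plan is to argue by induction on the lower index $(k-2)z_2 - i$ of the claimed binomial coefficient. Since $0 \leq i \leq k-2$, the base case $(k-2)z_2 - i = 0$ forces $z_2 \in \{0,1\}$, and in either sub-case the $k$-core with multiset $\{z_1, z_2^{(k-2-i)}, (z_2-1)^{(i)}, 0\}$ reduces to the single row/column configuration of Example~\ref{Ex:OneCol}, giving $\alpha = 1 = \binom{\cdot}{0}$ in agreement with the formula. (The case $z_2 = 0$ also isomorphically reduces $i$ to $0$ via $\vec{z}_{k-2,j}(z_1, z_2) = \vec{z}_{0,j'}(z_1, z_2-1)$, which allows us to take $z_2 \geq 1$ in the inductive step without loss of generality.)

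For the inductive step, let $\lambda$ be the $k$-core whose $\bC$-multiset is $\{z_1, z_2^{(k-2-i)}, (z_2-1)^{(i)}, 0\}$. The key combinatorial input will be an explicit description of the actual $\bC$-vector of $\lambda$ (not merely its multiset), obtained by an analysis in the spirit of Lemma~\ref{lem:SplitTypeCol}: the entry $z_1$ occupies the position $(z_1 - 1) \bmod k$, and the remaining entries are arranged so that the sequence descends cyclically except for a single "ascent" in Case A, or precisely two ascents in Case B, where I write \emph{Case A} for $z_1 - z_2 \equiv 1 \pmod{k-1}$ and \emph{Case B} otherwise. From this one reads off that the number of jumps of $\bC(\lambda)$ (in the sense of Lemma~\ref{lem:recur}) is exactly one in Case A and exactly two in Case B.

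Applying Lemma~\ref{lem:recur} in each case and identifying the resulting multisets, the recursion becomes $f(z_1,z_2,i) = f(z_1-1,z_2,i)$ in Case A (the unique downward displacement produces the $k$-core with multiset $\{z_1-1, z_2^{(k-2-i)}, (z_2-1)^{(i)}, 0\}$), and $f(z_1,z_2,i) = f(z_1, z_2, i+1) + f(z_1-1, z_2, i)$ in Case B (the two downward displacements produce the multisets $\vec{z}_{i+1,j}(z_1, z_2)$ and $\vec{z}_{i,j'}(z_1-1, z_2)$, respectively). Setting $M(z_1,z_2) = \lfloor \tfrac{k-2}{k-1}(z_1 + (k-2)z_2) \rfloor$ and $N = (k-2)z_2$, a direct fractional-part calculation shows $M(z_1-1,z_2) = M(z_1,z_2)$ in Case A and $M(z_1,z_2) - 1$ in Case B. The inductive hypothesis then confirms the Case A recurrence on the nose, while Case B is exactly Pascal's identity $\binom{M-i}{N-i} = \binom{M-i-1}{N-i-1} + \binom{M-i-1}{N-i}$ applied to the binomial formula.

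The hardest step will be the $\bC$-structure claim in the second paragraph: although the multiset is prescribed, pinning down the precise cyclic ordering (and thereby the locations of the jumps) is the main obstacle. I plan to prove it by induction on the rank $\rho_k(\lambda) = z_1 + (k-2)z_2 - i$, tracking how the $\bC$-vector evolves under a single upward displacement from a partition one rank lower; the analysis splits neatly according to whether the displacement introduces a new $z_2$, promotes a $z_2 - 1$ to a $z_2$, or increases $z_1$ by one, and in each scenario the claimed position of $z_1$ and the jump count propagate. Equivalently, one can translate the statement into the standard abacus/bead model of $k$-cores, where the claim becomes a transparent assertion about the relative positions of beads modulo $k$; either route will supply the structural input needed to close the induction.
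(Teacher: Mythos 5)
Your strategy is in essence the paper's: read off the descents of the $\bC$-vector, apply Lemma~\ref{lem:recur} to get a one-term recursion when $z_1\equiv z_2+1\pmod{k-1}$ and a two-term Pascal-type recursion otherwise, and close with the observation that $\lfloor\tfrac{k-2}{k-1}(n)\rfloor=\lfloor\tfrac{k-2}{k-1}(n-1)\rfloor$ exactly when $n\equiv 1\pmod{k-1}$. Your identification of the two cases and of the two displaced multisets $\vec{z}_{i+1}(z_1,z_2)$ and $\vec{z}_i(z_1-1,z_2)$ agrees with the paper (which, incidentally, carries the extra index $j$ recording where $z_1$ sits in the cyclic order and proves the formula for every $j$, needing only the single structural fact that $j=k-2$ corresponds to $z_1\equiv z_2\pmod{k-1}$; this is how it avoids the full $\bC$-vector determination you flag as your hardest step).

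There are, however, two concrete gaps. First, your induction on $(k-2)z_2-i$ does not terminate: the branch $f(z_1-1,z_2,i)$ leaves that quantity unchanged (and in Case A it is the only branch), so you must induct on something like the rank $z_1+(k-2)z_2-i$, or lexicographically. Second, and more seriously, your Case B claim of ``exactly two ascents'' fails on the boundary $z_1=z_2$ with $i=0$: there the entry preceding $z_1$ in the cyclic $\bC$-vector is also $z_2$, so there is only one descent in the sense of Lemma~\ref{lem:recur}, and your two-term recursion would both overcount and produce the term $f(z_2-1,z_2,0)$, which violates the hypothesis $z_1\geq z_2$. Since Case A repeatedly decrements $z_1$, the induction genuinely reaches $z_1=z_2$, so this situation must be added as a base case (it is the transpose of Example~\ref{Ex:OneCol}, where $\alpha=1=\binom{(k-2)z_2-i}{(k-2)z_2-i}$, exactly as the paper does with its base case $z_1=z_2$); your stated base case $(k-2)z_2-i=0$ does not cover it. Finally, the structural claim in your second paragraph is the real content of the argument and is only sketched; the paper's substitute is the short outside-corner argument identifying when $z_1\equiv z_2\pmod{k-1}$, which you would do well to adopt rather than a full determination of the cyclic ordering.
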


\begin{proof}
Note that the expression $(z_1 + (k-2)z_2)$ is divisible by $k-1$ if and only if $z_1 \equiv z_2 \pmod{k-1}$.  If $\bC (\lambda) = \vec{z}_{ij} (z_1 , z_2)$, then this congruence holds if and only if the partition $\lambda'$, obtained by deleting all columns of $\lambda$ that are taller than $z_2$, has an outside corner in $D_{z_1}$.  Since $\bC (\lambda') = \vec{z}_{ij} (z_2, z_2)$, this holds if and only if $j = k-2$.

We establish the above formula by induction.  The base cases, where $z_1 = z_2$, or $z_2 = i = 0$, both follow from Example~\ref{Ex:OneCol}.  If $j = k-1$, then by Lemma~\ref{lem:recur}, we have
\[
\alpha (\vec{z}_{i(k-1)} (z_1 , z_2 )) = \alpha (\vec{z}_{i(k-2)} (z_1 - 1, z_2 ) )  .
\]
By induction, the expression on the right is equal to
\[
{{\Big\lfloor \frac{k-2}{k-1} \Big( z_1 + (k-2)z_2 - 1 \Big) \Big\rfloor - i}\choose{(k-2)z_2 - i}} .
\]
Since $j = k-1$, by the above we see that $z_1 \equiv z_2 + 1 \pmod{k-1}$, so the term $(z_1 + (k-2)z_2 -1)$ is divisible by $k-1$.  The expression above is therefore equal to
\[
{{\Big\lfloor \frac{k-2}{k-1} \Big( z_1 + (k-2)z_2 \Big) \Big\rfloor - i}\choose{(k-2)z_2 - i}} .
\]

Otherwise, if $j < k-1$, then by Lemma~\ref{lem:recur}, we have
\[
\alpha (\vec{z}_{ij} (z_1 , z_2 ) ) = \alpha (\vec{z}_{i(j-1)} (z_1 - 1, z_2)) + \alpha (\vec{z}_{(i-1)j} (z_1 , z_2)).
\]
By induction, the expression on the right is equal to
\begin{align*}
& {{\Big\lfloor \frac{k-2}{k-1} \Big( z_1 + (k-2)z_2 \Big) \Big\rfloor - (i+1)}\choose{(k-2)z_2 - (i+1)}} + {{\Big\lfloor \frac{k-2}{k-1} \Big( z_1 + (k-2)z_2 - 1 \Big) \Big\rfloor - i}\choose{(k-2)z_2 - i}} \\
= & {{\Big\lfloor \frac{k-2}{k-1} \Big( z_1 + (k-2)z_2 \Big) \Big\rfloor - (i+1)}\choose{(k-2)z_2 - (i+1)}} + {{\Big\lfloor \frac{k-2}{k-1} \Big( z_1 + (k-2)z_2 \Big) \Big\rfloor - (i+1)}\choose{(k-2)z_2 - i}} \\
= & {{\Big\lfloor \frac{k-2}{k-1} \Big( z_1 + (k-2)z_2 \Big) \Big\rfloor - i}\choose{(k-2)z_2 - i}} ,
\end{align*}
where the second line holds because $(z_1 + (k-2)z_2 - 1)$ is not divisible by $k-1$.
\end{proof}

\end{example}

\bibliography{references}{}
\bibliographystyle{alpha}
\end{document}